\newtheorem{theorem}{Theorem}[section]
\numberwithin{equation}{section}
\numberwithin{figure}{section}
\numberwithin{table}{section}
\renewcommand{\vec}[1]{\mbox{\boldmath \small $#1$}}
\renewcommand{\qed}{\hfill \nobreak \ifvmode \relax \else
      \ifdim\lastskip<1.5em \hskip-\lastskip
      \hskip1.5em plus0em minus0.5em \fi \nobreak
      \vrule height0.75em width0.5em depth0.25em\fi}
\newtheorem{example}{Example}[section]
\newtheorem{remark}{Remark}[section]
\numberwithin{equation}{section}
\numberwithin{figure}{section}
\numberwithin{table}{section}
\newenvironment{proof}[1][Proof]{\begin{trivlist}
\item[\hskip \labelsep {\bfseries #1}]}{\end{trivlist}}
\renewcommand{\qed}{\hfill \nobreak \ifvmode \relax \else
      \ifdim\lastskip<1.5em \hskip-\lastskip
      \hskip1.5em plus0em minus0.5em \fi \nobreak
      \vrule height0.75em width0.5em depth0.25em\fi}
\begin{document}

\begin{frontmatter}
\title{Second-order accurate genuine BGK schemes for {the} ultra-relativistic flow simulations}

\author{Yaping Chen},
\ead{cyaping0918@126.com}
\author{Yangyu Kuang}
\ead{kyy@pku.edu.cn}
\address{HEDPS, CAPT \& LMAM, School of Mathematical Sciences, Peking University,
Beijing 100871, P.R. China}
\author[label2]{Huazhong Tang}
\thanks[label2]{Corresponding author. Tel:~+86-10-62757018;
Fax:~+86-10-62751801.}
\ead{hztang@math.pku.edu.cn}
\address{HEDPS, CAPT \& LMAM, School of Mathematical Sciences, Peking University,
Beijing 100871, P.R. China; School of Mathematics and Computational Science,
 Xiangtan University, Hunan Province, Xiangtan 411105, P.R. China}
 \date{\today{}}

\maketitle

\begin{abstract}
This paper presents second-order accurate genuine BGK (Bhatnagar-Gross-Krook)
schemes in the framework of finite volume method for the ultra-relativistic flows.
Different from the existing kinetic flux-vector
splitting (KFVS) or BGK-type schemes for  the ultra-relativistic Euler equations,
the  present genuine  BGK schemes are derived from the analytical solution of the
Anderson-Witting model, which is given for the first time and includes the ``genuine'' particle collisions in the gas transport process.
The BGK schemes for the ultra-relativistic viscous flows are also developed and two examples of ultra-relativistic viscous {flow} are designed.
Several 1D and 2D numerical experiments are conducted to demonstrate that the proposed BGK schemes
not only are accurate and stable in simulating
 ultra-relativistic inviscid and viscous flows, but also have higher resolution at the contact discontinuity  than the KFVS or BGK-type schemes.
\end{abstract}

\begin{keyword}
BGK scheme, Anderson-Witting model, ultra-relativistic Euler equations, ultra-relativistic Navier-Stokes equations
\end{keyword}
\end{frontmatter}


\section{Introduction}
\label{sec:intro}
Relativistic hydrodynamics (RHD) arise in astrophysics, nuclear physics, plasma physics and other fields.
In many radiation hydrodynamics problems of astrophysical interest,
the fluid moves at extremely high velocities near the speed of light,
and relativistic effects become important.
Examples of such flows are supernova explosions,
the cosmic expansion, and solar flares.

The relativistic hydrodynamical equations are highly nonlinear, making the analytic treatment
of practical problems extremely difficult. The numerical simulation is
the primary and powerful way to study and understand the relativistic hydrodynamics.
This work will mainly focus on the numerical methods for the special RHDs,
where  there is no strong gravitational field involved.
The pioneering numerical work may date back to
the   finite difference code via artificial viscosity  for the spherically symmetric general RHD equations in the Lagrangian coordinate \cite{May1966,May1967} and  the finite difference method with the artificial viscosity technique
 for the multi-dimensional RHD equations in the Eulerian coordinate \cite{R.Wilson1972}.
Since 1990s, the numerical study of the RHDs began to attract considerable attention, and various modern shock-capturing methods with an exact
or approximate Riemann solver have been developed for the RHD equations.
Some examples are the local characteristic approach \cite{Mart1991Numerical},
the two-shock approximation solvers \cite{balsara1994riemann,Dai1997},
 the Roe solver \cite{F.Eulderink1995}, the flux corrected transport method \cite{duncan1994}, the flux-splitting method based on the spectral decomposition \cite{Donat1998}, the piecewise parabolic method \cite{Marti1996,Mignone2005}, the HLL (Harten-Lax-van Leer) method \cite{schneider1993new}, the HLLC (Harten-Lax-van Leer-Contact) method \cite{mignone2005hllc} and the Steger-Warming flux vector splitting
 method \cite{Zhao2014Steger}.
 The analytical solution of the  Riemann problem in relativistic hydrodynamics was studied in
 \cite{mart1994}.
 Some other higher-order accurate methods have also been well studied in the literature, e.g.
 the ENO (essentially non-oscillatory) and weighted ENO methods \cite{dolezal1995relativistic,del2002efficient,Tchekhovskoy2007},
the discontinuous Galerkin (DG) method \cite{RezzollaDG2011}, the adaptive moving mesh methods \cite{he2012adaptive1,he2012adaptive2},
the Runge-Kutta DG {methods} with WENO limiter \cite{zhao2013runge,ZhaoTang-CiCP2017,ZhaoTang-JCP2017},
the direct Eulerian GRP {schemes} \cite{yang2011direct,yang2012direct,wu2014third},
and the local evolution Galerkin method \cite{wu2014finite}.
Recently some physical-constraints-preserving (PCP) schemes were developed for the special RHD
 equations. They are the high-order accurate PCP finite difference weighted essentially
 non-oscillatory (WENO) schemes and discontinuous Galerkin (DG) methods proposed
 in \cite{wu2015high,wu2016physical,qin2016bound}.
The readers are also referred to the early review articles \cite{marti2003review,Font2008}
 as well as references therein.


The gas-kinetic schemes present a gas evolution process from a kinetic scale to a
hydrodynamic scale, where both inviscid and viscous fluxes are recovered from moments
of a single time-dependent gas distribution function \cite{PanXu2015}.
The development of gas-kinetic schemes, such as the kinetic flux
vector splitting  (KFVS) and Bhatnagar-Gross-Krook (BGK) schemes,
has attracted much attention and significant progress has been
made in the non-relativistic hydrodynamics.
  They utilize the well-known connection that the macroscopic governing equations are the moments of the Boltzmann equation whenever the distribution function is at equilibrium.
The KFVS schemes are constructed by applying upwind technique
directly to the collisionless Boltzmann equation, see e.g. \cite{Pullin1980,Mandal1994,Chou1997,HongLui2001,Reitz1981,Perthame1992,TangMagnet2000,TangRadia2000,Tang1999High}.
Due to the lack of collision in the numerical flux calculations, the KFVS schemes
smear the solutions, especially the contact discontinuity.
To overcome this problem, the BGK schemes are constructed
by taking into account the particle collisions in the whole gas evolution process within a time step,
see e.g. \cite{LiXu2010,Xu2001,LiuTang2014}.
%
Moreover, due to their specific derivation,
 they are also able to present the accurate Navier-Stokes solution in the smooth flow regime and have favorable shock capturing capability in the shock region.
The  kinetic  beam  scheme was first proposed for the relativistic gas dynamics in \cite{yang1997kinetic}. After that,
the kinetic schemes for  the ultra-relativistic Euler equations  were developed in  \cite{Kunik2003ultra,kunik2003second,Kunik2004}.
The BGK-type  schemes  \cite{Xu1999,TangMagnet2000} were extended to the ultra-relativistic Euler equations  in \cite{kunik2004bgktype,QamarRMHD2005} in order to reduce the numerical dissipation.
Those kinetic schemes resulted directly from the moments of the relativistic J$\ddot{\text{u}}$ttner equilibrium distribution
without including the ``genuine" particle collisions in the gas transport process.

This paper will develop  second-order genuine BGK schemes
for the ultra-relativistic inviscid and viscous flow simulations.
%
%
It is organized as follows.
Section \ref{sec:basic} introduces the special relativistic Boltzmann
equation and discusses  how to recover some macroscopic quantities from the kinetic theory.
Section \ref{sec:GovernEqns} presents the ultra-relativistic hydrodynamical
equations through  the Chapman-Enskog expansion. Section \ref{sec:scheme}  develops
second-order accurate genuine BGK schemes for the 1D and 2D ultra-relativistic Euler equations and 2D ultra-relativistic
 Navier-Stokes equations. Section \ref{sec:test} gives several   numerical experiments to demonstrate accuracy, robustness and effectiveness of
 the proposed schemes in simulating inviscid and viscous ultra-relativistic fluid flows.
 Section \ref{sec:conclusion} concludes the paper.

\section{Preliminaries and notations}
\label{sec:basic}
In the special relativistic kinetic theory of gases \cite{rbebook}, a microscopic gas particle is characterized by the four-dimensional space-time coordinates $(x^{\alpha}) = (x^0,\vec{x})$ and four-momentum vectors $(p^{\alpha}) = (p^0,\vec{p})$, where $x^0 = ct$, $c$ denotes the speed of light in vacuum,   $t$ and $\vec{x}$ are the time and 3D spatial coordinates, respectively, and the Greek index $\alpha$  runs from $0, 1, 2, 3$.
Besides the contravariant notation (e.g. $p^{\alpha}$), the covariant notation such as $p_{\alpha}$ will also be used in the following,
while both notations $p^\alpha$ and $p_{\alpha}$ are related by
\[p_\alpha = g_{\alpha\beta}p^{\beta},\quad p^{\alpha} = g^{\alpha\beta}p_{\beta},\]
where the Einstein summation convention over repeated indices has been used, $(g^{\alpha\beta})$ is the Minkowski space-time metric tensor and  chosen as
$(g^{\alpha\beta}) = \text{diag}\{1, -1, -1, -1\}$, while $(g_{\alpha\beta})$ denotes the
inverse of $(g^{\alpha\beta})$.

For a free relativistic particle,  the relativistic energy-momentum
relation (aka ``on-shell'' or ``mass-shell'' condition)
$E^2-|\vec p|^2 c^2=m^2 c^4$ holds,  where $m$ denotes the mass of each structure-less particle which is assumed to be the same for all particles.  The ``mass-shell'' condition {can} be rewritten as
$p^{\alpha}p_{\alpha}=m^2c^2$ if putting $p^0= c^{-1}E=\sqrt{|\vec{p}|^2+m^2c^2}$,
which becomes
 $p^0=|\vec{p}|$ in the ultra-relativistic limit, i.e. $m\to0$.

Similar to the non-relativistic case,
the relativistic Boltzmann equation describes the evolution of one-particle distribution function $f(\vec{x},t,\vec{p})$  in the phase space spanned by the space-time coordinates $x^\alpha$ and momentum $p^\alpha$ of particles. It reads
\begin{equation}
  p^{\alpha}\frac{\partial f}{\partial x^{\alpha}} = Q(f,f),
\end{equation}
where $Q(f,f)$ denotes the collision term and depends on the product of distribution functions of two particles at collision. In the literature, there exist several simple collision models.
The Anderson-Witting model \cite{AW}
\begin{equation}\label{AWM}
  p^{\alpha}\frac{\partial f}{\partial x^{\alpha}} = -\frac{U_\alpha p^{\alpha}}{\tau c^2}(f-g),
\end{equation}
is similar to the BGK model in the non-relativistic kinetic theory and
will be considered in this paper, where  $\tau$ is the relaxation time,
in {the} Landau-Lifshitz frame, the hydrodynamic four-velocities  $U_{\alpha}$
are defined  by
\begin{equation}\label{EQ:Landau-Lifshitz frame}
  U_{\beta}T^{\alpha\beta} = \varepsilon g^{\alpha\beta}U_{\alpha},
\end{equation}
which implies that $(\varepsilon, U_{\alpha})$ is a generalized characteristic pair of $(T^{\alpha\beta},g^{\alpha\beta})$, $\varepsilon$ and $ T^{\alpha\beta}$ are
the energy density and  energy-momentum tensor, respectively,
  and $g=g(\vec{x},t,\vec{p})$ denotes the distribution function at the local thermodynamic
equilibrium, the so-called J$\ddot{\text{u}}$ttner
equilibrium (or relativistic Maxwellian) distribution. In the ultra-relativistic case,  it becomes \cite{Kunik2003ultra}
\begin{equation}\label{juttner}
  g={\frac{ n c^3}{8\pi k^3T^3}}\exp\left(-\frac{U_{\alpha}p^{\alpha}}{{kT}}\right)={\frac{ n c^3}{8\pi k^3T^3}}\exp\left(-\frac{|\vec{p}|}{{kT}}\left(U_0-\sum_{i=1}^3U_i\frac{p^i}{|\vec{p}|}\right)\right),
\end{equation}
where $n$ and $T$ denote { the} number density and thermodynamic temperature, respectively,
 and $k$ is the Boltzmann's constant.
The Anderson-Witting model \eqref{AWM} can tend to the BGK model in the non-relativistic limit
and the collision term $-\frac{U_\alpha p^{\alpha}}{\tau c^2}(f-g)$
satisfies the following identities
\begin{equation}\label{convc}
  \int_{\mathbb{R}^3}\frac{U_\alpha p^{\alpha}}{\tau c^2}(f-g)\Psi \frac{d^3\vec{p}}{p^0} = 0,
  \ \
  \vec{\Psi} = (1, p^{i}, p^0)^T,
\end{equation}
which imply the conservation of  particle number, momentum and energy
\begin{equation}\label{conv}
  \partial_{\alpha}N^{\alpha} = 0,\quad \partial_{\beta}T^{\alpha\beta} = 0,
\end{equation}
where the particle four-flow $N^{\alpha}$ and the energy-momentum tensor $ T^{\alpha\beta}$ are
related to the distribution $f$ by
  \begin{align}
    N^{\alpha} = c\int_{\mathbb{R}^3}p^{\alpha}f\frac{d^3\vec{p}}{p^0},
  \ \
  \label{intTT}
    T^{\alpha\beta}=c\int_{\mathbb{R}^3}p^{\alpha}p^{\beta}f\frac{d^3\vec{p}}{p^0}.
  \end{align}
In the Landau-Lifshitz
decomposition, both $N^{\alpha}$ and $T^{\alpha\beta}$ are rewritten  as follows
\begin{align}
  \label{NN} N^{\alpha} &= n U^{\alpha} + n^{\alpha},\\
  \label{TT} T^{\alpha\beta} &= c^{-2}\varepsilon U^{\alpha}U^{\beta} - \Delta^{\alpha\beta}(p+\varPi) + \pi^{\alpha\beta},
\end{align}
where   $\Delta^{\alpha\beta}$ is defined by
\begin{equation}
  \Delta^{\alpha\beta} = g^{\alpha\beta} - \frac{1}{c^2}U^{\alpha}U^{\beta},
\end{equation}
 satisfying $\Delta^{\alpha\beta} U_{\beta} = 0$,
the number density $n$,   particle-diffusion current $n^\alpha$,
energy density $\varepsilon$, and shear-stress tensor {$\pi^{\alpha\beta}$}
 can be calculated by
  \begin{align}
       n=& \frac{1}{c^2} U_{\alpha}N^{\alpha} = \frac{1}{c} \int_{\mathbb{R}^3}Ef\frac{d^3\vec{p}}{p^0},
       \\
 \label{n}
    n^{\alpha} =& \Delta^{\alpha}_{\beta}N^{\beta} = c\int_{\mathbb{R}^3}p^{<\alpha>}f\frac{d^3\vec{p}}{p^0},
\\
    \varepsilon =& \frac{1}{c^2}U_{\alpha}U_{\beta}T^{\alpha\beta} = \frac{1}{c}\int_{\mathbb{R}^3}E^2f\frac{d^3\vec{p}}{p^0},
\\
\label{pi}
    \pi^{\alpha\beta} =& \Delta^{\alpha\beta}_{\mu\nu}T^{\mu\nu} = c\int_{\mathbb{R}^3}p^{<\alpha\beta>}f\frac{d^3\vec{p}}{p^0},
    \end{align}
and the sum of thermodynamic pressure $p$ and bulk viscous pressure $\varPi$ is
  \begin{equation}\label{varPi}
    p + \varPi = -\frac{1}{3}\Delta_{\alpha\beta}T^{\alpha\beta} = \frac{1}{3c}\int_{\mathbb{R}^3}(E^2-m^2c^4)f\frac{d^3\vec{p}}{p^0}.
  \end{equation}
Here
$E=U_{\alpha}p^{\alpha}$, $p^{<\alpha>}=\Delta^{{\alpha}}_{\gamma}p^{\gamma}$, $p^{<\alpha\beta>}=\Delta^{\alpha\beta}_{\gamma\delta}p^{\gamma}p^{{\delta}}$, and
  \begin{equation}
      \Delta^{\alpha\beta}_{\mu\nu}=\frac{1}{2}(\Delta^{\alpha}_{\mu}\Delta^{\beta}_{\nu} + \Delta^{\beta}_{\mu}\Delta^{\alpha}_{\nu}-\frac{2}{3}\Delta_{\mu\nu}\Delta^{\alpha\beta}).
  \end{equation}

\begin{remark}\label{remark2.1}
The quantities $n^{\alpha}, \varPi$, and $\pi^{\alpha\beta}$ become zero
at the local thermodynamic equilibrium $f=g$.
\end{remark}

The following gives a general recovery procedure of the admissible primitive variables $n$, $\vec u$, and $T$ from the nonnegative distribution $f(\vec x, t, \vec p)$, where $\vec u$ is
the macroscopic velocity in the $(x^i)$ space. 
Such recovery procedure will be useful in  our BGK scheme.


\begin{theorem}\label{thm:NT}
  For any nonnegative  distribution $f(\vec{x},t,\vec{p})$ which is not always be zero, the  number density $n$, velocity $\vec{u}$ and temperature $T$  can be uniquely obtained as follows:
  \begin{enumerate}
  \item $T^{\alpha\beta}$ is positive definite and $(T^{\alpha\beta},g^{\alpha\beta})$ has only one positive generalized eigenvalue, i.e. the energy density $\varepsilon$,  and  $U_{\alpha}$ is corresponding generalized eigenvector  satisfying $U_0=\sqrt{U^2_1+U^2_2+U^2_3+c^2}$.
Thus, the macroscopic velocity $\vec{u}$ can be calculated by
$\vec{u}=-c(U^{-1}_0U_1,U^{-1}_0U_2,U^{-1}_0U_3)^{T}$, satisfying  $|\vec{u}|<c$ and
  \begin{equation}
   (U_{\alpha}) = (\gamma c,-\gamma\vec{u}), \ \ (U^{\alpha}) = (\gamma c,\gamma\vec{u}),
  \end{equation}
  where $\gamma=(1-c^{-2}|\vec{u}|^2)^{-\frac{1}{2}}$ denotes the Lorentz factor.
  \item
  The number density $ n $ is calculated by
  \begin{equation}
     n  = c^{-2} U_{\alpha}N^{\alpha}>0.
  \end{equation}
  \item The temperature $T$ solves the nonlinear algebraic equation
  \begin{equation}
    \varepsilon =  n m c^2(G(\zeta)-\zeta^{-1}),
  \end{equation}
   where $\zeta = \frac{ m c^2}{kT}$, $G(\zeta)=\frac{K_3(\zeta)}{K_2(\zeta)}$, and
    $K_{{\nu}}(\zeta)$  is  modified Bessel function of the second kind, defined by
\[
K_{{\nu}}(\zeta):=\int_{0}^{\infty}\cosh({\nu}\vartheta)\exp(-\zeta\cosh\vartheta)d\vartheta, \ \ {\nu}\geq 0.
\]
  In the ultra-relativistic case, $K_2(\zeta)$ and $K_3(\zeta)$ reduce to $\frac{2}{\zeta^2}$ and
  $\frac{8}{\zeta^3}$, respectively, so that one has $G(\zeta)=\frac{4}{\zeta}$, and then
  \begin{equation}\label{eT}
    \varepsilon = 3k n  T.
  \end{equation}
\end{enumerate}
\end{theorem}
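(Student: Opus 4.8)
The plan is to prove the three assertions in the stated order, each one using the previous.

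\emph{Item (1).} First I would note that $T^{\alpha\beta}=c\int_{\mathbb R^3}p^{\alpha}p^{\beta}f\,\tfrac{d^3\vec p}{p^0}$ is symmetric and positive definite: for any covector $(\xi_\alpha)\neq 0$ one has $\xi_\alpha\xi_\beta T^{\alpha\beta}=c\int_{\mathbb R^3}(\xi_\alpha p^{\alpha})^{2}f\,\tfrac{d^3\vec p}{p^0}>0$, because $f\ge 0$ is positive on a set of positive measure while $\vec p\mapsto\xi_\alpha p^{\alpha}$ (with $p^{0}=\sqrt{|\vec p|^{2}+m^{2}c^{2}}$) vanishes only on a Lebesgue-null set. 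Taking a Cholesky factorization $T=LL^{T}$, the generalized eigenproblem $\det\!\big(T^{\alpha\beta}-\lambda g^{\alpha\beta}\big)=0$ is equivalent to the ordinary symmetric eigenproblem for $\widetilde g:=L^{-1}(g^{\alpha\beta})(L^{-1})^{T}$; its eigenvalues $\mu_i$ are real and, $\widetilde g$ being congruent to $(g^{\alpha\beta})=\mathrm{diag}(1,-1,-1,-1)$, Sylvester's law of inertia gives exactly one $\mu_i>0$ (simple) and three negative. Hence the pencil has a single positive generalized eigenvalue, which I name $\varepsilon$, with one-dimensional eigenspace. Contracting $T^{\alpha\beta}U_{\beta}=\varepsilon U^{\alpha}$ with $U_{\alpha}$ gives $\varepsilon\,U_{\alpha}U^{\alpha}=U_{\alpha}U_{\beta}T^{\alpha\beta}=c\int(U_{\alpha}p^{\alpha})^{2}f\,\tfrac{d^3\vec p}{p^{0}}>0$, so $U$ is timelike and may be rescaled to $U_{\alpha}U^{\alpha}=c^{2}$, i.e.\ $U_0^2=U_1^2+U_2^2+U_3^2+c^2$, the sign fixed by $U_0>0$ (legitimate since $U_0^2>U_1^2+U_2^2+U_3^2\ge 0$). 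Setting $\vec u:=-c\,U_0^{-1}(U_1,U_2,U_3)^{T}$, the normalization immediately yields $|\vec u|<c$, $\gamma=(1-c^{-2}|\vec u|^2)^{-1/2}=U_0/c$, the claimed forms of $(U_{\alpha})$ and $(U^{\alpha})$, and $\varepsilon=c^{-2}U_{\alpha}U_{\beta}T^{\alpha\beta}$, which is the energy density.

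\emph{Item (2).} Since $U$ is future-pointing timelike, $E=U_{\alpha}p^{\alpha}=\gamma\big(cp^{0}-\vec u\cdot\vec p\big)>0$ for every $\vec p$ (using $p^{0}\ge|\vec p|$ and $|\vec u|<c$); hence $U_{\alpha}N^{\alpha}=c\int_{\mathbb R^3}E\,f\,\tfrac{d^3\vec p}{p^{0}}>0$ and $n=c^{-2}U_{\alpha}N^{\alpha}>0$.

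\emph{Item (3).} I would invoke the classical closed-form moments of the Jüttner equilibrium \eqref{juttner}: with four-velocity $U$ and parameters $(n,T)$ one has $N^{\alpha}_{g}=nU^{\alpha}$ and $c^{-2}U_{\alpha}U_{\beta}T^{\alpha\beta}_{g}=nmc^{2}\big(G(\zeta)-\zeta^{-1}\big)$, $\zeta=mc^{2}/(kT)$; defining $T$ through $\varepsilon=nmc^{2}(G(\zeta)-\zeta^{-1})$ thus matches the energy density of $f$ to that of its associated equilibrium, and it remains only to show this equation has a unique root $T>0$. For that I would put $\phi(\zeta):=mc^{2}\big(G(\zeta)-\zeta^{-1}\big)$ and observe that $\phi(\zeta)$ equals the mean of $E$ under the probability measure proportional to $E\,e^{-\zeta E/(mc^{2})}\tfrac{d^3\vec p}{p^{0}}$ (the Jüttner weight), so differentiating in $\zeta$ gives $\phi'(\zeta)=-(mc^{2})^{-1}\mathrm{Var}(E)<0$ — strictly negative because $E$ is non-constant on the full momentum support. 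Thus $\phi$ is strictly decreasing, and with $G(\zeta)\sim 4/\zeta$ as $\zeta\to 0^{+}$ and $G(\zeta)\to 1$ as $\zeta\to\infty$ it maps $(0,\infty)$ bijectively onto $(mc^{2},\infty)$. Since $E\ge mc^{2}$ pointwise and $\varepsilon=\tfrac1c\int E^{2}f\,\tfrac{d^3\vec p}{p^0}$, $n=\tfrac1c\int Ef\,\tfrac{d^3\vec p}{p^0}$, one gets $\varepsilon\ge mc^{2}n$, strictly (as $n>0$ forbids $f$ concentrating at $\vec p=\vec{0}$), so there is a unique admissible $\zeta>0$, hence a unique $T=mc^{2}/(k\zeta)>0$. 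Finally, letting $m\to 0$ (so $\zeta\to 0$) with $K_2(\zeta)\to 2/\zeta^{2}$, $K_3(\zeta)\to 8/\zeta^{3}$, i.e.\ $G(\zeta)=4/\zeta$, collapses the equation to $\varepsilon=3nmc^{2}/\zeta=3nkT$, which is \eqref{eT}.

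The step I expect to be the main obstacle is the strict monotonicity of $\phi$ in item (3). Proving it by brute force via the Riccati identity $G'(\zeta)=G(\zeta)^{2}-\tfrac{5}{\zeta}G(\zeta)-1$ reduces the question to a sharp upper bound on the Bessel ratio $G(\zeta)=K_3(\zeta)/K_2(\zeta)$ of Turán type, which is delicate; the variance argument above sidesteps it, at the cost of invoking the explicit Jüttner moments from relativistic kinetic theory \cite{rbebook}. (A routine but essential standing hypothesis is that $f$ decays fast enough for $N^{\alpha}$ and $T^{\alpha\beta}$ to be finite, without which none of these macroscopic quantities exist.)
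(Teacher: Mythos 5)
Your proposal is correct, and for items (1) and (2) it follows the paper's own route: positive definiteness of $T^{\alpha\beta}$ via the quadratic form $c\int(x_\alpha p^\alpha)^2 f\,d^3\vec p/p^0>0$, the signature of $(g^{\alpha\beta})$ forcing exactly one positive generalized eigenvalue (the paper compresses your Cholesky/Sylvester step into one sentence, but it is the same argument), the contraction $0<U_\alpha U_\beta T^{\alpha\beta}=\varepsilon U_\alpha U^\alpha$ to get a timelike $U$ normalized by $U_0=\sqrt{U_1^2+U_2^2+U_3^2+c^2}$, and then $E=U_\alpha p^\alpha>0$ by Cauchy--Schwarz to conclude $n=c^{-2}U_\alpha N^\alpha>0$. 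Where you genuinely depart from the paper is item (3): the paper simply states that the positive temperature is obviously obtained from \eqref{eT}, i.e.\ it only addresses the ultra-relativistic closure $T=\varepsilon/(3kn)$, whereas you prove unique solvability of the full massive equation $\varepsilon=nmc^2\bigl(G(\zeta)-\zeta^{-1}\bigr)$ by identifying $mc^2\bigl(G(\zeta)-\zeta^{-1}\bigr)$ with the Jüttner mean energy per particle, showing strict monotonicity through the variance identity $\phi'(\zeta)=-\mathrm{Var}(E)/(mc^2)<0$, establishing the range $(mc^2,\infty)$ from the asymptotics of $G$, and checking the compatibility condition $\varepsilon>nmc^2$ from $E\geq mc^2$. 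This buys a complete existence-and-uniqueness statement for general $m>0$ (at the price of invoking the standard Jüttner moment formulas from the kinetic-theory literature, and of the integrability caveat you already note), and it elegantly avoids the Turán-type Bessel-ratio estimates that a direct attack on $G'(\zeta)$ would require; the paper's one-line treatment suffices for its purposes because only the ultra-relativistic case \eqref{eT} is used in the schemes, but your argument is the one that actually justifies the theorem's "uniquely obtained" claim beyond that limit.
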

\begin{proof}
  \begin{enumerate}
  \item Since the nonnegative distribution $f(\vec{x},t,\vec{p})$  is not identically zero,  using the relation \eqref{intTT} gives
\begin{align}\nonumber
  \vec{X}^{T}T^{\alpha\beta}\vec{X} &= c\vec{X}^{T}\int_{\mathbb{R}^3}p^{\alpha}p^{\beta}f\frac{d^3\vec{p}}{p^0}\vec{X}
       =c\int_{\mathbb{R}^3}x_{\alpha}p^{\alpha}p^{\beta}x_{\beta}f\frac{d^3\vec{p}}{p^0}
       \\
       &= c\int_{\mathbb{R}^3}(x_{\alpha}p^{\alpha})^2f\frac{d^3\vec{p}}{p^0}>0,
\end{align}
for any nonzero vector $\vec{X}=(x_0,x_1,x_2,x_3)^T\in\mathbb{R}^4$. Thus,
the matrix $T^{\alpha\beta}$ is positive definite.

  Thanks to $g^{\alpha\beta} = \text{diag}\{1,-1,-1,-1\}$ and \eqref{EQ:Landau-Lifshitz frame}, the matrix-pair $(T^{\alpha\beta},g^{\alpha\beta})$ has an unique positive generalized eigenvalue
$\varepsilon$,  satisfying
\begin{equation}
  0<U_{\alpha}T^{\alpha\beta}U_{\beta} = \varepsilon U_{\alpha}g^{\alpha\beta}U_{\beta},
\end{equation}
which implies $U^2_0>U^2_1+U^2_2+U^2_3$. Thus, one can obtain $U_0=\sqrt{U^2_1+U^2_2+U^2_3+c^2}$  via multiplying $(U_{\alpha})$ by a scaling constant
$c(U^2_0-U^2_1-U^2_2-U^2_3)^{-1/2}$. As a result, the macroscopic velocity $\vec{u}$
can be calculated by
$\vec{u}=-c(U^{-1}_0U_1,U^{-1}_0U_2,U^{-1}_0U_3)^{T}$, satisfying
\begin{equation}
  |\vec{u}| = cU^{-1}_0\sqrt{U^2_1+U^2_2+U^2_3}<c.
\end{equation}

\item For ${U}_i\in\mathbb{R}$ and ${p}^i\in\mathbb{R}$, $i=1,2,3$,
 using the Cauchy-Schwarz inequality gives
\begin{align}
  \vec{U}\cdot\vec{p} &\leqslant  |\vec{U}||\vec{p}|
                            < \sqrt{U^2_1+U^2_2+U^2_3+c^2}\cdot p^0
                            =U_0p^0,
\end{align}
which implies $E=U_{\alpha}p^{\alpha}>0$. Thus  one has
\begin{equation}
     n  = \frac{1}{c^2} U_{\alpha}N^{\alpha} = \frac1c \int_{\mathbb{R}^3}Ef\frac{d^3\vec{p}}{p^0}>0.
\end{equation}

\item It is obvious that the positive temperature $T$ can be obtained from  \eqref{eT}.
\end{enumerate}\qed
\end{proof}

\section{Ultra-relativistic hydrodynamic equations}
\label{sec:GovernEqns}
This section gives  the ultra-relativistic hydrodynamic equations, which can be derived from
the Anderson-Witting model by using the Chapman-Enskog expansion.
For the sake of convenience,  units in which the speed of light and the Boltzmann's constant
are equal to one will be used here and hereafter.

\subsection{Euler equations}
In the ultra-relativistic limit, the macroscopic variables $ n, \varepsilon, p$
are related to $g$  by
\begin{align}
  n &= \int_{\mathbb{R}^3}Eg\frac{d^3\vec{p}}{|\vec{p}|},\\
  \varepsilon &= \int_{\mathbb{R}^3}E^2g\frac{d^3\vec{p}}{|\vec{p}|}=3 n  T,\\
  p &=\frac13\int_{\mathbb{R}^3}E^2g\frac{d^3\vec{p}}{|\vec{p}|}= \frac{1}{3}\varepsilon.
\end{align}
If taking the zero order   Chapman-Enskog expansion  $f=g$ and using the conclusion in
Remark \ref{remark2.1},  the  ultra-relativistic Euler equations are derived as follows
\begin{equation}
  \frac{\partial \vec{W}}{\partial t} + \sum^{3}_{k=1}\frac{\partial\vec F^{k}(\vec{W})}{\partial x^k} = 0,
\end{equation}
where
\begin{equation}
  \vec{W} = \left(N^0, T^{0i},   T^{00}
                \right)^T
             = \left(
                   n  U^0,
                   n  h U^0U^i,
                   n  h U^0U^0 - p
                  \right)^T,
\end{equation}
and
\begin{equation}
\vec{F^k(W)} = \left( N^k,
                             T^{ki},
                             T^{k0}
                \right)^T
           = \left(
                 n  U^k,
                 n  h U^kU^i + p\delta^{ik},
                 n  h U^kU^0
               \right)^T.
\end{equation}
Here $i=1,2,3$ and $h=4T$ denotes the specific enthalpy.
For the given conservative vector  $\vec{W}$, one can get
the primitive variables $ n , U^k$ and $p$
by \cite{kunik2004bgktype}
\begin{align}\label{eq:con2pri}
\begin{aligned}
  p &= \frac{1}{3}\left(-T^{00}+\sqrt{4(T^{00})^2-3\sum^3_{i=1}(T^{0i})^2}\right),\\
  U^i &= \frac{T^{0i}}{\sqrt{4p(p+T^{00})}},\ \
   n   = \frac{N^0}{\sqrt{1+\sum_{i=1}^3(U^i)^2}}, \quad i=1,2,3.
\end{aligned}\end{align}

\subsection{Navier-Stokes equations}
If taking the first order Chapman-Enskog expansion
\begin{equation}\label{ce}
  f=g(1-\frac{\tau}{U_{\alpha}p^{\alpha}}\varphi),
\end{equation}
with
\begin{equation}\label{dev}
  \varphi = -\frac{p_{\alpha}p_{\beta}}{T}\nabla^{<\alpha}U^{\beta>} + \frac{p_{\alpha}}{T^2}(U_{\beta}p^{\beta}-h)(\nabla^{\alpha}T-\frac{T}{ n  h}\nabla^{\alpha}p),
\end{equation}
where 
$\nabla^{\alpha} = \Delta^{\alpha\beta}\partial_{\beta}$ and  $\nabla^{<\alpha}U^{\beta>} = \Delta^{\alpha\beta}_{\gamma\delta}\nabla^{\gamma}U^{\delta}$,
then   \eqref{n}, \eqref{pi} and \eqref{varPi} give
\begin{align}
  n^{\alpha} = -\frac{\lambda}{h}(\nabla^{\alpha}T-\frac{T}{ n  h}\nabla^{\alpha}p), \ \
  \pi^{\alpha\beta} = 2\mu\nabla^{<\alpha}U^{\beta>},\ \
  \varPi = 0 ,
\end{align}
where $\lambda=\frac{4}{3T}p\tau$ and $\mu=\frac{4}{5}p\tau$.
Based on those, the ultra-relativistic Navier-Stokes equations see \cite{rbebook} can be obtained
as follows
\begin{equation}\label{eq-NS01}
  \frac{\partial \vec{W}}{\partial t} + \sum^{3}_{k=1}\frac{\partial\vec F^{k}(\vec{W})}{\partial x^k} = 0,
\end{equation}
where
\begin{equation}
  \vec{W} = \begin{pmatrix} N^0\\
                               T^{0i}\\
                               T^{00}
                \end{pmatrix}
             = \begin{pmatrix}
                   n  U^0 - \frac{\lambda}{h}(\nabla^0 T-\frac{T}{ n  h}\nabla^0p)\\
                   n  h U^0U^i + 2\mu\nabla^{<0}U^{i>}\\
                   n  h U^0U^0 - p + 2\mu\nabla^{<0}U^{0>}
                \end{pmatrix},
\end{equation}
and
\begin{equation}\label{eq-NS03}
\vec{F^k(W)} = \begin{pmatrix} N^k\\
                             T^{ki}\\
                             T^{k0}
              \end{pmatrix}
           = \begin{pmatrix}
                 n  U^k - \frac{\lambda}{h}(\nabla^k T-\frac{T}{ n  h}\nabla^kp)\\
                 n  h U^kU^i + p\delta^{ik} + 2\mu\nabla^{<k}U^{i>}\\
                 n  h U^kU^0 + 2\mu\nabla^{<k}U^{0>}
              \end{pmatrix}.
\end{equation}
It shows that one cannot recover
the values of primitive variables $n, {\vec{u}}$ and ${T}$ only from the given
 conservative vector  $\vec{W}$. In practice,
the values of  $n, {\vec{u}}$ and ${T}$ have to be recovered
from the given  $\vec{W}$  and $\vec{F^k(W)}$ or $N^\alpha$ and $T^{\alpha\beta}$
by  using Theorem \ref{thm:NT}.

\section{Numerical schemes}\label{sec:scheme}
This section develops second-order accurate genuine BGK schemes for the 1D and 2D ultra-relativistic Euler and Navier-Stokes equations.
{The BGK} schemes are derived from the analytical solution of the Anderson-Witting model \eqref{AWM}, which is given
for the first time and includes the ``genuine'' particle collisions in the gas transport
process.

\subsection{1D  Euler equations}\label{sec:scheme-euler1d}
Consider the 1D ultra-relativistic Euler equations with $\vec{u}=(u,0,0)^{T}$ as
\begin{equation}\label{1DEuler}
  \frac{\partial \vec{W}}{\partial t} + \frac{\partial \vec{F}(\vec{W})}{\partial x} = 0,
\end{equation}
where
\begin{equation}
  \vec{W} = \left(  n  U^0,
                   n  h U^0U^1,
                   n  h U^0U^0 - p
                \right)^T,\
\vec{F(W)} = \left(
                   n  U^1,
                   n  h U^1U^1 + p,
                   n  h U^0U^1\right)^T.
\end{equation}
It is strictly hyperbolic because there are three real and distinct eigenvalues of the Jacobian matrix $A(\vec{W})=\partial\vec{F}/\partial \vec{W}$    \cite{wu2015high}
\begin{equation}
  \lambda_1=\frac{u(1-c_s^2)-c_s(1-u^2)}{1-u^2c_s^2},\quad \lambda_2=u,\quad\lambda_3=\frac{u(1-c_s^2)+c_s(1-u^2)}{1-u^2c_s^2},
\end{equation}
where $c_s=1/\sqrt{3}$ is the speed of sound. 

Divide the spatial domain into a uniform mesh with the step size $\Delta x$ and the $j$th cell  $I_j = (x_{j-\frac{1}{2}}, x_{j+\frac{1}{2}})$,  where $x_{j+\frac{1}{2}} = \frac{1}{2}(x_{j}+x_{j+1})$ and $x_j = j\Delta x$, $j\in\mathbb{Z}$. The time interval $[0,T]$
is also divided into a (non-uniform) mesh $\{t_{n+1}=t_n+\Delta t_n, t_0=0, n\geqslant0\}$, where  the step size $\Delta t_n$ is determined by
\begin{equation}\label{TimeStep1D}
  \Delta t_n = \frac{C\Delta x}{\max\limits_j \bar{\varrho}_j},
\end{equation}
 the constant $C$ denotes the CFL number, and $\bar{\varrho}_j$ denotes a suitable approximation of the spectral radius of $A(\vec{W})$ within the   cell $I_j$.
For the given approximate cell-average values $\{\vec{\bar{W}}^n_j\}$, i.e.
\begin{equation*}
  \vec{\bar{W}}^{n}_j \approx \frac{1}{\Delta x}\int_{I_j}\vec{W}(x,t_n)dx,
\end{equation*}
 reconstruct a  piecewise linear function  as follows
\begin{equation}\label{EQ:initial-reconstruction}
  \vec{W}_h(x,t_n)=\sum\vec{W}^n_j(x)\chi_j(x), \ \  \vec{W}^n_j(x):=  {\vec{\bar{W}}}^n_j + \vec{W}^{n,x}_j(x-x_j),
\end{equation}
where $\vec{W}^{n,x}_j$ is the approximate slope in the   cell $I_j$ obtained by using some slope limiter and $\chi_j(x)$ denotes the characteristic function of $I_j$.

In the 1D case,
 the  Anderson-Witting model \eqref{AWM} reduces to
\begin{equation}\label{1DAW}
  p^0\frac{\partial f}{\partial t} + p^1\frac{\partial f}{\partial x} = \frac{U_{\alpha}p^{\alpha}}{\tau}(g-f),
\end{equation}
whose analytical solution is given by
\begin{align}
\nonumber f(x,t,\vec{p})&=\int_{0}^{t}g(x',t',\vec{p})\exp\left(-\int_{t'}^{t}
\frac{U_{\alpha}(x'',t'')p^{\alpha}}{p^{0}\tau}dt''\right)
\frac{U_{\alpha}(x',t')p^{\alpha}}{p^{0}\tau}dt'\\
\label{eq:1DAWsolu}&+\exp\left(-\int_{0}^{t}
\frac{U_{\alpha}(x',t')p^{\alpha}}{\tau p^{0}}dt'\right)f_{0}(x-v_1t,\vec{p}),
\end{align}
where $v_1=p^1/p^0$ is the velocity of particle in $x$ direction, $x'=x-v_1(t-t')$ and  $x''=x-v_1(t-t'')$ are {the} particle trajectories,
and $f_{0}$ is the initial particle velocity distribution function, i.e. $f(x,0,\vec{p})=f_{0}(x,\vec{p})$.

 Taking the moments of \eqref{1DAW}
and integrating them over the space-time cell $I_j\times[t_n,t_{n+1})$ {yield}
\begin{equation}
  \int_{t_n}^{t_{n+1}}\int_{I_j}\int_{\mathbb{R}^3}\vec{\Psi}(p^0\frac{\partial f}{\partial t} + p^1\frac{\partial f}{\partial x} -  \frac{U_{\alpha}p^{\alpha}}{\tau}(g-f))d\varXi dxdt=0,
\end{equation}
where $d\varXi = \frac{d^3\vec{p}}{|\vec{p}|}$.
Using the conservation constraints \eqref{convc} gives
\begin{align}\nonumber
  \int_{I_j}&\int_{\mathbb{R}^3}\vec{\Psi} p^0f(x,t_{n+1},\vec{p})d\varXi dx
  =\int_{I_j}\int_{\mathbb{R}^3}\vec{\Psi} p^0f(x,t_{n},\vec{p})d\varXi dx
   \\ & - \int_{t_n}^{t_{n+1}}\int_{\mathbb{R}^3}\vec{\Psi} p^1\left(f(x_{j+\frac{1}{2}},t,\vec{p}) - f(x_{j-\frac{1}{2}},t,\vec{p})\right)~d\varXi dxdt,\label{BGK1D-0001}
\end{align}
which is the starting point of our 1D second-order accurate BGK scheme.
If  replacing  the distribution
  $f(x_{j\pm \frac{1}{2}},t,\vec{p})$  in \eqref{BGK1D-0001} with an approximate distribution $\hat{f}(x_{j\pm \frac{1}{2}},t,\vec{p})$, then one gets the following finite volume scheme
\begin{equation}
  \vec{\bar{W}}^{n+1}_j = \vec{\bar{W}}^{n}_j - \frac{\Delta t_n}{\Delta x}(\hat{\vec{F}}^n_{j+\frac{1}{2}} - \hat{\vec{F}}^n_{j-\frac{1}{2}}),
\end{equation}
where the numerical flux $\hat{\vec{F}}^n_{j+\frac{1}{2}}$ is given by
\begin{equation}\label{eq:1DF}
  \hat{\vec{F}}^n_{j+\frac{1}{2}}=\frac{1}{\Delta t_n}\int_{t_n}^{t_{n+1}}\int_{\mathbb{R}^3}\vec{\Psi} p^1\hat{f}(x_{j+\frac{1}{2}},t,\vec{p})d\varXi dxdt,
\end{equation}
with
\begin{align}\label{eq:faprox}
\nonumber \hat{f}(x_{j+\frac{1}{2}},t,\vec{p})&=\int_{t_n}^{t}g_h(x',t',\vec{p})\exp\left(-\int_{t'}^{t}
\frac{U_{\alpha}(x'',t'')p^{\alpha}}{p^{0}\tau}dt''\right)
\frac{U_{\alpha}(x',t')p^{\alpha}}{p^{0}\tau}dt'\\
&+\exp\left(-\int_{t_n}^{t}
\frac{U_{\alpha}(x',t')p^{\alpha}}{p^{0}\tau}dt'\right)f_{h,0}(x_{j+\frac{1}{2}}-v_1(t-t_n),\vec{p}),
\end{align}
{here} $v_1=p^1/p^0$, $x'=x_{j+\frac{1}{2}}-v_1(t-t')$, $x''=x_{j+\frac{1}{2}}-v_1(t-t'')$, $f_{h,0}(x_{j+\frac{1}{2}}-v_1(t-t_n),\vec{p})\approx f_{0}(x_{j+\frac{1}{2}}-v_1(t-t_n),\vec{p})$
and $g_h(x',t',\vec{p})\approx g(x',t',\vec{p})$.
%
It is worth noting that it is very expensive to get $U_{\alpha}(x'',t'')$ and $U_{\alpha}(x',t')$ {at} the right hand side of \eqref{eq:faprox}.   In practice, $U_{\alpha}(x'',t'')$ and $U_{\alpha}(x',t')$ in the first term may be approximated  as  $U_{\alpha,j+\frac{1}{2}}^n$ while
  $U_{\alpha}(x',t')$  in the second term may be simplified as $U_{\alpha,j+\frac{1}{2},L}^n$ or $U_{\alpha,j+\frac{1}{2},R}^n$ depending on the sign of $v_1$ and {will be given} in Section \ref{sec:fluxevolution}.

The remaining tasks {are to derive} the
 approximate initial velocity distribution function $f_{h,0}(x_{j+\frac{1}{2}}-v_1(t-t_n),\vec{p})$
 and equilibrium velocity distribution function $g_h(x',t',\vec{p})$.

\subsubsection{Equilibrium distribution $g_0$ at the point $(x_{j+\frac{1}{2}},t_n)$}
\label{sec:fluxevolution}

At the cell interface ${x=x_{j+\frac{1}{2}}}$, \eqref{EQ:initial-reconstruction} gives the following left and right  limiting values
\begin{equation}
\begin{aligned}
  \vec{W}^n_{j+\frac{1}{2},L} &:= \vec{W}_h(x_{j+\frac{1}{2}}-0,t_n)=\vec{W}^n_j(x_{j+\frac{1}{2}}),\\
  \vec{W}^n_{j+\frac{1}{2},R} &:= \vec{W}_h(x_{j+\frac{1}{2}}+0,t_n)=\vec{W}^n_{j+1}(x_{j+\frac{1}{2}}),\\
  \vec{W}^{n,x}_{j+\frac{1}{2},L} &:= \frac{d\vec{W}_h}{dx}(x_{j+\frac{1}{2}}-0,t_n)=\frac{d\vec{W}^n_j}{dx}(x_{j+\frac{1}{2}}),\\
  \vec{W}^{n,x}_{j+\frac{1}{2},R} &:= \frac{d\vec{W}_h}{dx}(x_{j+\frac{1}{2}}+0,t_n)=\frac{d\vec{W}^n_{j+1}}{dx}(x_{j+\frac{1}{2}}).
\end{aligned}
\label{eq:RL}
\end{equation}
 Using \eqref{juttner},   $\vec{W}^n_{j+\frac{1}{2},L}$ and $\vec{W}^n_{j+\frac{1}{2},R}$
 {gives}
  the J$\ddot{\text{u}}$ttner distributions at the left and right  of cell interface $x=x_{j+\frac{1}{2}}$ as follows
\begin{equation}
\begin{aligned}
    g_L&=\frac{ n _{j+1/2,L}}{8\pi T_{j+1/2,L}^3}e^{-\frac{U_{\alpha,j+1/2,L}p^{\alpha}}{T_{j+1/2,L}}},\ \
    g_R =\frac{ n _{j+1/2,R}}{8\pi T_{j+1/2,R}^3}e^{-\frac{U_{\alpha,j+1/2,R}p^{\alpha}}{T_{j+1/2,R}}},
\end{aligned}
\label{eq:gLR}
\end{equation}
and the particle four-flow $N^{\alpha}$ and the energy-momentum tensor $T^{\alpha\beta}$ at the point $(x_{j+\frac{1}{2}},t_n)$
\begin{align*}
(N^{0},T^{01},T^{00})_{j+\frac{1}{2}}^{n,T}&:=\int_{\mathbb{R}^{3}\cap{p^{1}    >0}}\vec{\Psi}p^{0}g_{L}d\Xi+\int_{\mathbb{R}^{3}\cap{p^{1}<0}}\vec{\Psi}p^{0}g_{R}d\Xi, \\
(N^{1},T^{11},T^{01})_{j+\frac{1}{2}}^{n,T}&:=\int_{\mathbb{R}^{3}\cap{p^{1}    >0}}\vec{\Psi}p^{1}g_{L}d\Xi+\int_{\mathbb{R}^{3}\cap{p^{1}<0}}\vec{\Psi}p^{1}g_{R}d\Xi.
\end{align*}
Using those and Theorem \ref{thm:NT} calculates the macroscopic quantities $ n ^n_{j+\frac{1}{2}}, T^n_{j+\frac{1}{2}}$, and $U_{{\alpha,j+\frac{1}{2}}}^n$, and then gives
the J$\ddot{\text{u}}$ttner distribution function at the point $(x_{j+\frac{1}{2}},t_n)$ as follows
\begin{align}
    g_0&=\frac{ n ^n_{j+1/2}}{8\pi (T^n_{j+1/2})^3}\exp(-\frac{U_{{\alpha,j+\frac{1}{2}}}^np^{\alpha}}{T^n_{j+1/2}}),
\end{align}
which will be used to derive the equilibrium velocity distribution $g_h(x,t,\vec{p})$,
see Section \ref{subsection4.1.3}.

\subsubsection{Initial distribution function $f_{h,0}(x,\vec{p})$}\label{subsection1d-fh0}
Assuming that $f(x,t,\vec{p})$ and $g(x,t,\vec{p})$ are sufficiently smooth and borrowing the idea in the Chapman-Enskog expansion, $f(x,t,\vec{p})$ is supposed to be
 expanded as follows
\begin{equation}\label{eq:ceEuler}
  f(x,t,\vec{p})=g-\frac{\tau}{U_{\alpha} p^{\alpha}}(p^0 {g_t+p^1g_x})+O(\tau^2)=:g\left(1-\frac{\tau}{U_{\alpha} p^{\alpha}}(p^0 A+p^1 a)\right)+O(\tau^2),
\end{equation}
with \begin{equation}\label{eq:aAform}
A=A_{1}+A_{2}p^{1}+A_{3}p^{0}, \quad a=a_{1}+a_{2}p^{1}+a_{3}p^{0}.
\end{equation}
The conservation constraints \eqref{convc}  give the constraints on $A$ and $a$
\begin{equation}
\label{eq:aAcons}
\int_{\mathbb{R}^{3}}\vec{\Psi}(p^{0}A+p^{1}a)gd\Xi=\int_{\mathbb{R}^{3}}\vec{\Psi}(p^{0}g_{t}+p^{1}g_{x})d\Xi
=\frac{1}{\tau}\int_{\mathbb{R}^{3}}\vec{\Psi}U_{\alpha}p^{\alpha}(g-f)d\Xi=0.
\end{equation}

Setting $t=t_n$
and using \eqref{eq:ceEuler} and the Taylor series expansion of $f(x,t,\vec{p})$ with respect to $x$ from both sides of the cell interface $x=x_{j+\frac{1}{2}}$  give  the following approximate initial non-equilibrium distribution function
\begin{equation}
\label{eq:fh0}
f_{h,0}(x,t_n,\vec{p}):=\left\{\begin{aligned}
&g_{L}\left(1-\frac{\tau}{U_{\alpha,L}p^{\alpha}}(p^{0}A_{L}+p^{1}a_{L})+a_{L}\tilde{x}\right),&\tilde{x}<0,\\
&g_{R}\left(1-\frac{\tau}{U_{\alpha,R}p^{\alpha}}(p^{0}A_{R}+p^{1}a_{R})+a_{R}\tilde{x}\right),&\tilde{x}>0,
\end{aligned}
\right.
\end{equation}
where $\tilde{x}=x-x_{j+\frac{1}{2}}$, $g_{L}$ and $g_{R}$ are given   in \eqref{eq:gLR}, $(a_{L},A_{L})$ and $(a_{R},A_{R})$ are considered as the left and right limits of $(a,A)$ at the cell interface $x=x_{j+\frac{1}{2}}$ respectively.
The slopes $a_L$ and $a_R$ come from the spatial derivative of
J$\ddot{\text{u}}$ttner distribution and have   unique
correspondences with the slopes of the conservative variables $\vec W$ by
\[
<a_{\omega}{p^{0}}>=\vec{W}_{j+\frac{1}{2},\omega}^{n,x},\
\]
where
\[
{ <a_{\omega}>:=\int_{\mathbb{R}^{3}}a_{\omega}g_{\omega} \vec{\Psi}d\Xi},\ \ \omega=L,R.
\]
Those correspondences form  the linear system for the unknow $\vec{a}_\omega:=({a_{\omega,1}, a_{\omega,2}, a_{\omega,3}})^T$
\begin{equation}\label{eq:la}
M_{0}^{\omega}\vec{a}_\omega=\vec{W}_{j+\frac{1}{2},\omega}^{n,x},
\end{equation}
where
 the coefficient matrix $M_{0}^{\omega}$ is given by
\[
  M_{0}^{\omega}=\int_{\mathbb{R}^{3}}p^{0}g_{\omega}\vec{\Psi}\vec{\Psi}^Td\Xi, \ \omega=L, R.
\]
 {Using} the conservation constraints \eqref{eq:aAcons} and  $a_{\omega}$  gives the linear system for $A_\omega$  as follows
\[
<a_{\omega}p^{1}+A_{\omega}p^{0}>=0,
\]
which can be cast into the following form
\begin{equation}\label{eq:lA}
M_{0}^{\omega}\vec{A}_\omega =-M_{1}^{\omega}\vec{a}_\omega,
\end{equation}
with
\begin{equation*}
M_{1}^{\omega}=\int_{\mathbb{R}^{3}}g_{\omega}p^{1}\vec{\Psi}\vec{\Psi}^Td\Xi, \ \omega={L},{R}.
\end{equation*}

The rest is to calculate all elements of $M_0$ and $M_1$, whose superscript $L$ or $R$ has been omitted for the sake of convenience. In the ultra-relativistic limit, those can be exactly gotten.
 Because $p^0=|\vec{p}|$, the triple integrals in $M_0$ and $M_1$ can be simplified by using polar coordinate transformation 
\begin{equation}\label{eq-polar-tran}
  p^1 = |\vec{p}|\xi, \ p^2=|\vec{p}|\sqrt{1-\xi^2}\sin\varphi,\ p^3=|\vec{p}|\sqrt{1-\xi^2}\cos\varphi, \  \xi\in[-1,1], \varphi\in[-\pi,\pi],
\end{equation}
  which implies $d\Xi=|\vec{p}|d|\vec{p}|d\xi d\varphi$.
 In fact, the above  transformation can convert the triple integrals in the matrices $M_0$ and $M_1$
 into a single integral with respect to $|\vec{p}|$ and a double integral with
 respect to $\xi$ and $\varphi$. On the other hand, in the 1D case, the integrands   do  not depend on the variable $\varphi$, so the double integral can further {reduce} to a single integral with respect to $\xi$ which can be exactly calculated.
 Those lead to 
\begin{align}\nonumber
  M_0&=\int_{\mathbb{R}^{3}}p^{0}g\vec{\Psi}\vec{\Psi}^Td\Xi
  =\begin{pmatrix}
    \int^1_{-1}\Phi(x,\xi)d\xi       & \int^1_{-1}\xi\Psi(x,\xi)d\xi    & \int^1_{-1}\Psi(x,\xi)d\xi\\
    \int^1_{-1}\xi\Psi(x,\xi)d\xi    & \int^1_{-1}\xi^2\Upsilon(x,\xi)d\xi    & \int^1_{-1}\xi\Upsilon(x,\xi)d\xi\\
    \int^1_{-1}\Psi(x,\xi)d\xi    & \int^1_{-1}\xi\Upsilon(x,\xi)d\xi    & \int^1_{-1}\Upsilon(x,\xi)d\xi\\
    \end{pmatrix}\\
&=\begin{pmatrix}
     n  U^0           & 4 n  T U^1U^0              &  n  T(4U^1U^1+3)\\
    4 n  T U^1U^0      & 4 n  T^2 U^0(6U^1U^1+1)    & 4 n  T^2 U^1(6U^1U^1+5)\\
     n  T(4U^1U^1+3)   & 4 n  T^2 U^1(6U^1U^1+5)    & 12 n  T^2 U^0(2U^1U^1 + 1)
  \end{pmatrix},\label{M0}
\end{align}
and
\begin{align}\notag
  M_1&=\int_{\mathbb{R}^{3}}p^{1}g\vec{\Psi}\vec{\Psi}^Td\Xi
  =\begin{pmatrix}
    \int^1_{-1}\xi\Phi(x,\xi)d\xi     & \int^1_{-1}\xi^2\Psi(x,\xi)d\xi    & \int^1_{-1}\xi\Psi(x,\xi)d\xi\\
    \int^1_{-1}\xi^2\Psi(x,\xi)d\xi   & \int^1_{-1}\xi^3\Upsilon(x,\xi)d\xi    & \int^1_{-1}\xi^2\Upsilon(x,\xi)d\xi\\
    \int^1_{-1}\xi\Psi(x,\xi)d\xi     & \int^1_{-1}\xi^2\Upsilon(x,\xi)d\xi    & \int^1_{-1}\xi\Upsilon(x,\xi)d\xi\\
    \end{pmatrix}\\
&=\begin{pmatrix}
     n  U^1            &  n  T (4U^1U^1+1)           & 4 n  T U^1U^0\\
     n  T (4U^1U^1+1)   & 12 n  T^2 U^1(2U^1U^1+1)    & 4 n  T^2 U^0(6U^1U^1+1)\\
    4 n  T U^1U^0       & 4 n  T^2 U^0(6U^1U^1+1)     & 4 n  T^2 U^1(6U^1U^1 + 5)
  \end{pmatrix},\label{M1}
\end{align}
where
\begin{align}
  \Phi(x,\xi) &=\frac{1}{2}\frac{ n (x)}{(U^0(x)-\xi U^1(x))^3},\notag\\
  \Psi(x,\xi) &=\frac{3}{2}\frac{( n  T)(x)}{(U^0(x)-\xi U^1(x))^4},\\
  \Upsilon(x,\xi) &= \frac{6( n  T^2)(x)}{(U^0(x)-\xi U^1(x))^5}.\notag
\end{align}

\subsubsection{Equilibrium velocity distribution $g_h(x,t,\vec{p})$}\label{subsection4.1.3}
 Using $\vec{W}_{0}:=\vec{W}_{j+\frac{1}{2}}^{n}$ derived in
Section \ref{sec:fluxevolution} and the approximate cell average values $\vec{\bar{W}}_{j+1}$ and $\vec{\bar{W}}_{j}$  {reconstructs} a cell-vertex based linear polynomial around the cell interface $x=x_{j+\frac{1}{2}}$ as follows
\[
\vec{W}_{0}(x)=\vec{W}_{0}+\vec{W}_{0}^{x}(x-x_{j+\frac{1}{2}}),
\]
where $\vec{W}_{0}^{x}=\frac{1}{\Delta x}(\vec{\bar{W}}_{j+1}-\vec{\bar{W}}_{j})$.
Again the Taylor series expansion of $g$ at the cell interface $x=x_{j+\frac{1}{2}}$
gives
\begin{equation}
\label{eq:gh}
g_{h}(x,t,\vec{p})=g_{0}(1+a_{0}(x-x_{j+\frac{1}{2}})+A_{0}(t-t_n)),
\end{equation}
where $(a_0,A_0)$ are the values of $(a,A)$ at the point $(x_{j+\frac{1}{2}},t_n)$.
Similarly,
the slope $a_0$ comes from the spatial derivative of
J$\ddot{\text{u}}$ttner distribution and has  a unique
correspondence with the slope of the conservative variables $\vec W$ by
\[
{<a_{0} p^0>}=\vec{W}_{0}^{x},
\]
and then the conservation constraints and $a_0$ gives
  the following linear system
$$  \quad <A_{0}p^{0}+a_{0}p^{1}>=0.
  $$
Those can be rewritten as
\[
M_{0}^{0}\vec{a}_{0}=\vec{W}_{0}^{x},\quad M_{0}^{0}\vec{A}_{0}=- M_{1}^{0}\vec{a}_{0},
\]
where $\vec{a}_{0}=(a_{0,1},a_{0,2}, a_{0,3})^T$, $\vec{A}_{0}=(A_{0,1},A_{0,2}, A_{0,3})^T$, and
 $M_0^0$ and $M_1^0$  can be calculated by \eqref{M0} and \eqref{M1} with $ n, T$ and $U^{\alpha}$ instead of  $ n ^n_{j+\frac{1}{2}}, T^n_{j+\frac{1}{2}}$ and $U^{n,\alpha}_{j+\frac{1}{2}}$. Those systems can be solved by using the  subroutine for \eqref{eq:la} and \eqref{eq:lA}.

Up to now, all parameters in the initial gas distribution function $f_{h,0}$ and the equilibrium state $g_h$ have been determined. Substituting  \eqref{eq:fh0} and \eqref{eq:gh} into \eqref{eq:faprox} gives our distribution function $\hat{f}$ at a cell interface $x=x_{j+\frac{1}{2}}$ as follows
\begin{small}
\begin{align}\notag
 &\hat{f}(x_{j+\frac{1}{2}},t,\vec{p})
 =g_0\Big(1-\exp\big(-\frac{U_{\alpha,j+\frac{1}{2}}^np^{\alpha}}{p^0\tau}(t-t_n)\big)\Big)
 \notag\\
&+g_0a_0v_1\Big(\big(t-t_n+\frac{p^0\tau}{U_{\alpha,j+\frac{1}{2}}^n
p^{\alpha}}\big)\exp\big(-\frac{U_{\alpha,j+\frac{1}{2}}^np^{\alpha}}
{p^0\tau}(t-t_n)\big)-\frac{p^0\tau}{U_{\alpha,j+\frac{1}{2}}^np^{\alpha}}\Big)\notag\\
&+g_0A_0\Big((t-t_n)-\frac{p^0\tau}{U_{\alpha,j+\frac{1}{2}}^np^{\alpha}}
\Big(1-\exp\big(-\frac{U_{\alpha,j+\frac{1}{2}}^np^{\alpha}}{p^0\tau}(	t-t_n)\big)\Big)\Big)\notag\\
&+H[v_1]g_L\big(1-\frac{\tau}{U_{\alpha,j+\frac{1}{2},L}^np^{\alpha}}(p^0A_L+p^1a_L)-a_Lv_1(t-t_n)\big)
\exp\big(-\frac{U_{\alpha,j+\frac{1}{2},L}^np^{\alpha}}{p^0\tau}(t-t_n)\big)
\notag\\
&+(1-H[v_1])g_R\big(1-\frac{\tau}{U_{\alpha,j+\frac{1}{2},R}^np^{\alpha}}
(p^0A_R+p^1a_R)-a_Rv_1(t-t_n)\big)\exp\big(-\frac{U_{\alpha,j+\frac{1}{2},R}^np^{\alpha}}{p^0\tau}(t-t_n)\big),\label{EQ0000000000}
\end{align}
\end{small}
where $H[x]$ is the Heaviside function defined by
\begin{equation*}
  H[x]=\begin{cases}
            0,& x<0,\\
            1,& x\geqslant0.
          \end{cases}
\end{equation*}
Finally, substituting \eqref{EQ0000000000} into the integral \eqref{eq:1DF} yields the numerical flux $\hat{\vec{F}}^n_{j+\frac{1}{2}}$.

\subsection{2D Euler equations}\label{sec:2DGKSEuler}
This section extends the above BGK scheme to the 2D ultra-relativistic Euler equations
\begin{equation}\label{2DEuler}
  \frac{\partial \vec{W}}{\partial t} + \frac{\partial \vec{F}(\vec{W})}{\partial x} + \frac{\partial \vec{G}(\vec{W})}{\partial y}= 0,
\end{equation}
where
\begin{align}
  \vec{W} = \begin{pmatrix}
                   n  U^0\\
                   n  h U^0U^1\\
                   n  h U^0U^2\\
                   n  h U^0U^0 - p
                \end{pmatrix},
\vec{F}(\vec{W}) = \begin{pmatrix}
                   n  U^1\\
                   n  h U^1U^1 + p\\
                   n  h U^2U^1\\
                   n  h U^0U^1
                  \end{pmatrix},
\vec{G}(\vec{W}) = \begin{pmatrix}
               n  U^2\\
               n  h U^1U^2\\
               n  h U^2U^2 + p\\
               n  h U^0U^2
              \end{pmatrix},
\end{align}
with $h=4T$, $p= n  T$, and $\vec u = (u_1, u_2, 0)^T$.
Four real eigenvalues of the Jacobian matrix $A_1(\vec{W})=\partial\vec{F}/\partial \vec{W}$ and $A_2(\vec{W})=\partial\vec{G}/\partial \vec{W}$ can be given as follows
\begin{align}
  \lambda_k^{(1)}&=\frac{u_k(1-c_s^2)-c_s\gamma(\vec{u})\sqrt{1-u_k^2-(|\vec{u}|^2-u_k^2)c_s^2}}
  {1-|\vec{u}|^2c_s^2},\nonumber\\
  \lambda_k^{(2)}&=\lambda_k^{(3)}=u_k,\nonumber\\
  \lambda_k^{(4)}&=\frac{u_k(1-c_s^2)+c_s\gamma(\vec{u})\sqrt{1-u_k^2
  -(|\vec{u}|^2-u_k^2)c_s^2}}{1-|\vec{u}|^2c_s^2},\nonumber
\end{align}
where $k=1,2$, and $c_s = \frac{1}{\sqrt{3}}$ is the speed of sound.

Divide the spatial domain $\Omega$ into a rectangular mesh with the cell $I_{i,j}=\{(x,y)|x_{i-\frac{1}{2}}<x<x_{i+\frac{1}{2}}, y_{j-\frac{1}{2}}<y<y_{j+\frac{1}{2}}\}$, where
$x_{i+\frac{1}{2}} = \frac{1}{2}(x_{i}+x_{i+1}), y_{j+\frac{1}{2}}=\frac{1}{2}(y_{j}+y_{j+1})$,  $x_i=i\Delta x, y_j = j\Delta y$, and $i,j\in\mathbb{Z}$. The time interval $[0,T]$
is also partitioned into a (non-uniform) mesh ${t_{n+1}=t_n+\Delta t_n, t_0=0, n\geqslant0}$, where  the time step size $\Delta t_n$ is determined by
\begin{equation}\label{TimeStep2D}
  \Delta t_n = \frac{C \min\{\Delta x, \Delta y\}}{\max\limits_{ij}\{\bar{\varrho}^1_{i,j},\bar{\varrho}^2_{i,j}\}},
\end{equation}
 the constant $C$ denotes the CFL number, and $\bar{\varrho}^k_{i,j}$ denotes the  approximation of the spectral radius of $A_k(\vec{W})$ over the cell  $I_{i,j}$, $k=1,2$.

The  2D Anderson-Witting model becomes
\begin{equation}\label{2DAW}
  p^0\frac{\partial f}{\partial t} + p^1\frac{\partial f}{\partial x} + p^2\frac{\partial f}{\partial y}= \frac{U_{\alpha}p^{\alpha}}{\tau}(g-f),
\end{equation}
whose analytical solution can be given by
\begin{align}
\nonumber f(x,y,t,\vec{p})=&\int_{0}^{t}g(x',y', t',\vec{p})\exp\left(-\int_{t'}^{t}
\frac{U_{\alpha}(x'',y'',t'')p^{\alpha}}{p^{0}\tau}dt''\right)
\frac{U_{\alpha}(x',y',t')p^{\alpha}}{p^{0}\tau}dt'\\
&+\exp\left(-\int_{0}^{t}
\frac{U_{\alpha}(x',y',t')p^{\alpha}}{\tau p^{0}}dt'\right)f_{0}(x-v_1t,y-v_2t,\vec{p}),
\label{eq:2DAWsolu}
\end{align}
where $v_1=p^1/p^0$ and $v_2=p^2/p^0$ are the particle velocities in $x$  and $y$ directions respectively, $\{x'=x-v_1(t-t'), y'=y-v_2(t-t')\}$  and  $\{x''=x-v_1(t-t''), y''=y-v_2(t-t'')\}$ are the particle trajectories,
and $f_{0}(x,y,\vec{p})$ is the initial particle velocity distribution function, i.e. $f(x,y,0,\vec{p})=f_{0}(x,y,\vec{p})$.

Taking the moments of \eqref{2DAW}
and integrating them over  $I_{i,j}\times[t_n,t_{n+1})$ {yield} the 2D finite volume
scheme
\begin{equation}
\label{eq:2DEulerDisc}
  \vec{\bar{W}}^{n+1}_{i,j} = \vec{\bar{W}}^{n}_{i,j} - \frac{\Delta t_n}{\Delta x}(\hat{\vec{F}}^{n}_{i+\frac{1}{2},j} - \hat{\vec{F}}^n_{i-\frac{1}{2},j}) - \frac{\Delta t_n}{\Delta y}(\hat{\vec{G}}^n_{i,j+\frac{1}{2}} - \hat{\vec{G}}^n_{i,j-\frac{1}{2}}),
\end{equation}
where $\vec{\bar{W}}^{n}_{i,j}$ is the cell average approximation of   conservative vector $\vec{W}(x,y,t)$ over the cell $I_{i,j}$ at time $t_n$, i.e.
\begin{equation*}
  \vec{\bar{W}}^{n}_{i,j} \approx \frac{1}{\Delta x{\Delta y}}\int_{I_{i,j}}\vec{W}(x,y,t_n)dx{dy},
\end{equation*}
and
\begin{align}\label{eq:2DF}
  \hat{\vec{F}}^n_{i+\frac{1}{2},j}&=\frac{1}{\Delta t_n}\int_{t_n}^{t_{n+1}}\int_{\mathbb{R}^3}\vec{\Psi} p^1\hat{f}(x_{i+\frac{1}{2}},y_j,t,\vec{p})d\varXi {dt},\\
    \hat{\vec{G}}^n_{i,j+\frac{1}{2}}&=\frac{1}{\Delta t_n}\int_{t_n}^{t_{n+1}}\int_{\mathbb{R}^3}\vec{\Psi} p^2\hat{f}(x_i,y_{j+\frac{1}{2}},t,\vec{p})d\varXi {dt},
\end{align}
where $\hat{f}(x_{i+\frac{1}{2}},y_j,t,\vec{p})\approx f(x_{i+\frac{1}{2}},y_j,t,\vec{p})$ and
$\hat{f}(x_i,y_{j+\frac{1}{2}},t,\vec{p})\approx f(x_i,y_{j+\frac{1}{2}},t,\vec{p})$.
%
%
Because the derivation of
$\hat{f}(x_i,y_{j+\frac{1}{2}},t,\vec{p})$ is very similar to $\hat{f}(x_{i+\frac{1}{2}},y_j, t,\vec{p})$,
we will mainly derive $\hat{f}(x_{i+\frac{1}{2}},y_j, t,\vec{p})$ with the help of \eqref{eq:2DAWsolu} as follows
\begin{align}
\nonumber \hat{f}(x_{i+\frac{1}{2}},y_j,t,\vec{p})=&\int_{t_n}^{t}g_h(x',y', t',\vec{p})\exp\left(-\int_{t'}^{t}
\frac{U_{\alpha}(x'',y'',t'')p^{\alpha}}{p^{0}\tau}dt''\right)
\frac{U_{\alpha}(x',y',t')p^{\alpha}}{p^{0}\tau}dt'\\
\label{eq:2DAWsoluapprox}&+\exp\left(-\int_{t_n}^{t}
\frac{U_{\alpha}(x',y',t')p^{\alpha}}{\tau p^{0}}dt'\right)f_{h,0}(x_{i+\frac{1}{2}}-v_1\tilde{t},y_j-v_2\tilde{t},\vec{p}),
\end{align}
where $\tilde{t}=t-t_n$, $x'=x_{i+\frac{1}{2}}-v_1(t-t'), y'=y_j-v_2(t-t')$ and $x''=x_{i+\frac{1}{2}}-v_1(t-t''), y''=y_j-v_2(t-t'')$, and $f_{h,0}(x_{i+\frac{1}{2}},y_j-v_1\tilde{t},\vec{p})$ and $g_h(x',y',t',\vec{p})$ are (approximate)   initial   distribution function and equilibrium velocity distribution function, respectively, which will be presented in the following. Similarly,   in order to avoid expensive cost in  getting $U_{\alpha}(x'',y'',t'')$ or $U_{\alpha}(x',y',t')$ along the particle trajectory, $U_{\alpha}(x'',y'',t'')$ and $U_{\alpha}(x',y',t')$ in  \eqref{eq:2DAWsoluapprox} may be taken as a constant $U_{\alpha,{i+\frac{1}{2},j}}^n$, and  $U_{\alpha}(x',y',t')$ in the second term may be replaced with $U_{\alpha,{i+\frac{1}{2},j},L}^n$ or $U_{\alpha,{i+\frac{1}{2},j},R}^n$ which is given in Section \ref{sec:fluxevolution2D}.

\subsubsection{Equilibrium distribution $g_0$ at the point $(x_{i+\frac{1}{2}},y_j, t_n)$}
\label{sec:fluxevolution2D}
Using the cell average values $\{\vec{\bar{W}}^n_{i,j}\}$ reconstructs a  piecewise linear function
\begin{equation}\label{eq-2dreconstruction}
 \vec{W}_h(x,y,t_n)=\sum_{i,j} \vec{W}^n_{i,j}(x,y)\chi_{i,j}(x,y),
\end{equation}
where {$\vec{W}^n_{i,j}(x,y):={\vec{\bar{W}}}^n_{i,j} + \vec{W}^{n,x}_{i,j}(x-x_i) + \vec{W}^{n,y}_{i,j}(y-y_j)$, $\vec{W}^{n,x}_{i,j}$ and $\vec{W}^{n,y}_{i,j}$ are the $x$- and $y$-slopes in the cell $I_{i,j}$, respectively, and $\chi_{i,j}(x,y)$ is the characteristic function of the cell $I_{i,j}$.}
At the point $(x_{i+\frac{1}{2}},y_j)$, the left and right  limiting values of $\vec{W}_h(x,y,t_n)$ are given
by
\begin{equation}
\begin{aligned}
  \vec{W}^n_{i+\frac{1}{2},j,L} &:= \vec{W}_h(x_{i+\frac{1}{2}}-0,y_j,t_n)=\vec{W}^n_{i,j}(x_{i+\frac{1}{2}},y_j),\\
  \vec{W}^n_{i+\frac{1}{2},j,R} &:= \vec{W}_h(x_{j+\frac{1}{2}}+0,y_j,t_n)=\vec{W}^n_{{i+1,j}}(x_{i+\frac{1}{2}},y_j),\\
  \vec{W}^{n,x}_{i+\frac{1}{2},j,L} &:= \frac{d\vec{W}_h}{dx}(x_{i+\frac{1}{2}}-0,y_j,t_n)=\frac{d\vec{W}^n_{i,j}}{dx}(x_{i+\frac{1}{2}},y_j),\\
  \vec{W}^{n,x}_{i+\frac{1}{2},j,R} &:= \frac{d\vec{W}_h}{dx}(x_{i+\frac{1}{2}}+0,y_j,t_n)=\frac{d\vec{W}^n_{{i+1,j}}}{dx}(x_{i+\frac{1}{2}},y_j),\\
  \vec{W}^{n,y}_{i+\frac{1}{2},j,L} &:= \frac{d\vec{W}_h}{dy}(x_{i+\frac{1}{2}}-0,y_j,t_n)=\frac{d\vec{W}^n_{i,j}}{dy}(x_{i+\frac{1}{2}},y_j),\\
  \vec{W}^{n,y}_{i+\frac{1}{2},j,R} &:= \frac{d\vec{W}_h}{dy}(x_{i+\frac{1}{2}}+0,y_j,t_n)=\frac{d\vec{W}^n_{{i+1,j}}}{dy}(x_{i+\frac{1}{2}},y_j).
\end{aligned}
\label{eq:RL2D}
\end{equation}
Similar to the 1D case, with the help of $\vec{W}^n_{i+\frac{1}{2},j,L}$, $\vec{W}^n_{i+\frac{1}{2},j,R}$ and J$\ddot{\text{u}}$ttner distribution \eqref{juttner}, one can get $g_L$ and $g_R$ at $(x_{i+\frac{1}{2}},y_j,{t_n})$.
Then the particle four-flow $N^{\alpha}$ and the energy-momentum tensor $T^{\alpha\beta}$ at $(x_{j+\frac{1}{2}},y_j,t_n)$
can be defined by
\begin{align*}
&(N^{0},T^{01},T^{02},T^{00})_{i+\frac{1}{2},j}^{n,T}:=\int_{\mathbb{R}^{3}\cap{p^{1}    >0}}\vec{\Psi}p^{0}g_{L}d\Xi+\int_{\mathbb{R}^{3}\cap{p^{1}<0}}\vec{\Psi}p^{0}g_{R}d\Xi,\\
&(N^{1},T^{11},T^{21},T^{01})_{i+\frac{1}{2},j}^{n,T}:=\int_{\mathbb{R}^{3}\cap{p^{1}    >0}}\vec{\Psi}p^{1}g_{L}d\Xi+\int_{\mathbb{R}^{3}\cap{p^{1}<0}}\vec{\Psi}p^{1}g_{R}d\Xi,\\
&(N^{2},T^{12},T^{22},T^{02})_{i+\frac{1}{2},j}^{n,T}:=\int_{\mathbb{R}^{3}\cap{p^{1}    >0}}\vec{\Psi}p^{2}g_{L}d\Xi+\int_{\mathbb{R}^{3}\cap{p^{1}<0}}\vec{\Psi}p^{2}g_{R}d\Xi.
\end{align*}
Using those and Theorem \ref{thm:NT}, the macroscopic quantities   $ n ^n_{i+\frac{1}{2},j}, T^n_{i+\frac{1}{2},j}$ and $U_{{\alpha,i+\frac{1}{2},j}}^n$ can be calculated and  then the J$\ddot{\text{u}}$ttner distribution function $g_0$ at $(x_{i+\frac{1}{2}},y_j, t_n)$ is   obtained.

Similarly, in the $y$-direction, $\vec{W}^n_{i,j+\frac{1}{2},L}$ and $\vec{W}^n_{i,j+\frac{1}{2},R}$ can also
be given by \eqref{eq-2dreconstruction} so that one has corresponding left and right equilibrium distributions $\tilde{g}_L$ and
$\tilde{g}_R$. The particle four-flow $N^{\alpha}$ and the energy-momentum tensor $T^{\alpha\beta}$ at $(x_i,y_{j+\frac{1}{2}}, t_n)$ are defined by
\begin{align*}
&(N^{0},T^{01},T^{02},T^{00})_{i,j+\frac{1}{2}}^{n,T}:=\int_{\mathbb{R}^{3}\cap{p^{2}    >0}}\vec{\Psi}p^{0}\tilde{g}_{L}d\Xi+\int_{\mathbb{R}^{3}\cap{p^{2}<0}}\vec{\Psi}p^{0}\tilde{g}_{R}d\Xi,\\
&(N^{1},T^{11},T^{21},T^{01})_{i,j+\frac{1}{2}}^{n,T}:=\int_{\mathbb{R}^{3}\cap{p^{2}    >0}}\vec{\Psi}p^{1}\tilde{g}_{L}d\Xi+\int_{\mathbb{R}^{3}\cap{p^{2}<0}}\vec{\Psi}p^{1}\tilde{g}_{R}d\Xi,\\
&(N^{2},T^{12},T^{22},T^{02})_{i,j+\frac{1}{2}}^{n,T}:=\int_{\mathbb{R}^{3}\cap{p^{2}    >0}}\vec{\Psi}p^{2}\tilde{g}_{L}d\Xi+\int_{\mathbb{R}^{3}\cap{p^{2}<0}}\vec{\Psi}p^{2}\tilde{g}_{R}d\Xi,
\end{align*}
which give   $ n ^n_{i,j+\frac{1}{2}}, T^n_{i,j+\frac{1}{2}}$, $U_{{\alpha,i,j+\frac{1}{2}}}^n$ and $g_0$ at $(x_{i},y_{j+\frac{1}{2}}, t_n)$.

The following will  derive the initial distribution function $f_{h,0}(x,y,\vec{p})$ and equilibrium  distribution $g_h(x,y,t,\vec{p})$, separately.

\subsubsection{Initial  distribution function $f_{h,0}(x,y,\vec{p})$}
\label{Section-Initial-velocity-distribution}
Borrowing the idea in the Chapman-Enskog expansion, $f(x,y,t,\vec{p})$ is supposed to be of the form
\begin{small}
\begin{equation}\label{eq:ceEuler2D000}
  f(x,y,t,\vec{p})=g-\frac{\tau}{U_{\alpha} p^{\alpha}}\left(p^0{g_t+p^1g_x+p^2g_y}\right)+O(\tau^2)=:g\left(1-\frac{\tau}{U_{\alpha} p^{\alpha}}\left(p^0A+p^1a+p^2b\right)\right)+O(\tau^2).
\end{equation}
\end{small}
The conservation constraints \eqref{convc} imply the constraints on $A,a$ and $b$
\begin{equation}
\label{eq:aAcons2D}
\int_{\mathbb{R}^{3}}\vec{\Psi}(p^{0}A+p^{1}a+p^2b)gd\Xi=\int_{\mathbb{R}^{3}}\vec{\Psi}(p^{0}g_{t}+p^{1}g_{x}+p^2g_y)d\Xi
=\frac{1}{\tau}\int_{\mathbb{R}^{3}}\vec{\Psi}U_{\alpha}p^{\alpha}(g-f)d\Xi=0.
\end{equation}
Using the Taylor series expansion of $f$  at the cell interface $(x_{i+\frac{1}{2}},y_j)$ gives
\begin{equation}
\label{eq:fh02D}
f_{h,0}=\left\{\begin{aligned}
&g_{L}\left(1-\frac{\tau}{U_{\alpha,L}p^{\alpha}}(p^{0}A_{L}+p^{1}a_{L}+p^{2}b_{L})+a_{L}\tilde{x} + b_{L}\tilde{y}\right),&\tilde{x}<0,\\
&g_{R}\left(1-\frac{\tau}{U_{\alpha,R}p^{\alpha}}(p^{0}A_{R}+p^{1}a_{R}+p^{2}b_{L})+a_{R}\tilde{x} + b_{R}\tilde{y}\right),&\tilde{x}>0,
\end{aligned}
\right.
\end{equation}
where $\tilde{x}=x-x_{i+\frac{1}{2}}, \tilde{y}=y-y_j$, and
 $(a_{\omega}, b_{\omega}, A_{\omega})$, $\omega=L,R$, are of the form
\begin{align}\label{eq:aAform2D}\begin{aligned}
a_{\omega}=&a_{\omega,1}+a_{\omega,2}p^{1}+a_{\omega,3}p^{2}+a_{\omega,4}p^{0}, \\
b_{\omega}=&b_{\omega,1}+b_{\omega,2}p^{1}+b_{\omega,3}p^{2}+b_{\omega,4}p^{0}, \\
A_{\omega}=&A_{\omega,1}+A_{\omega,2}p^{1}+A_{\omega,3}p^{2}+A_{\omega,4}p^{0}.
\end{aligned}\end{align}
The slopes $a_\omega$ and $b_\omega$ come from the spatial derivative of
J$\ddot{\text{u}}$ttner distribution and have   unique
correspondences with the slopes of the conservative variables $\vec W$ by
 the following linear systems for $a_\omega$ and $b_\omega$
\begin{align*}
<a_{\omega}{p^0}>=\vec{W}_{i+\frac{1}{2},j,\omega}^{n,x},\ \
<b_{\omega}{p^0}>=\vec{W}_{i+\frac{1}{2},j,\omega}^{n,y}, \ \omega=L,R.
\end{align*}
Those linear systems can also be expressed as follows
\begin{align*}
    M_{0}^{\omega}{\vec{a}}_\omega=\vec{W}_{i+\frac{1}{2},j,\omega}^{n,x},\quad
    M_{0}^{\omega}{\vec{b}}_\omega=\vec{W}_{i+\frac{1}{2},j,\omega}^{n,y},
\end{align*}
where the coefficient matrix is defined by
\[
  M_{0}^{\omega}=\int_{\mathbb{R}^{3}}p^{0}g_{\omega}\vec{\Psi}\vec{\Psi}^Td\Xi.
\]

Substituting $a_\omega$ and $b_\omega$ into the conservation constraints \eqref{eq:aAcons2D} gives the linear systems for $A_\omega$  as follows
\[
<a_{\omega}p^{1}+b_{\omega}p^{2}+A_{\omega}p^{0}>=0,\ \omega=L,R,
\]
which can be rewritten as
\begin{equation}\label{eq:lA2D}
M_{0}^{\omega}\vec{A}_{\omega}=-M_{1}^{\omega}\vec{a}_{\omega}-M_{2}^{\omega}\vec{b}_{\omega},
\end{equation}
where
\begin{align*}
M_{1}^{\omega}=\int_{\mathbb{R}^{3}}g_{\omega}p^{1}\vec{\Psi}\vec{\Psi}^Td\Xi,\ \
  M_{2}^{\omega}=\int_{\mathbb{R}^{3}}g_{\omega}p^{2}\vec{\Psi}\vec{\Psi}^Td\Xi, \ \omega=L,R.
\end{align*}
All elements of the  matrices $M_0^{\omega}$, $M_1^{\omega}$ and $M_2^{\omega}$ can also be explicitly presented
by using the coordinate transformation \eqref{eq-polar-tran}. 
If omitting  the superscripts $L$ and $R$, then
the matrices $M_0$, $M_1$, and $M_2$ are
\begin{small}
\begin{align}\notag
  M_0&=\int_{\mathbb{R}^{3}}p^{0}g\vec{\Psi}\vec{\Psi}^Td\Xi:=\begin{pmatrix}
                                                                M^0_{00} &M^0_{01} &M^0_{02} &M^0_{03}\\
                                                                M^0_{10} &M^0_{11} &M^0_{12} &M^0_{13}\\
                                                                M^0_{20} &M^0_{21} &M^0_{22} &M^0_{23}\\
                                                                M^0_{30} &M^0_{31} &M^0_{32} &M^0_{33}
                                                              \end{pmatrix}\\
  &=\begin{pmatrix}
  \int^{\pi}_{-\pi}\int^1_{-1}\Phi d\xi d\varphi    &\int^{\pi}_{-\pi}\int^1_{-1}w^1\Psi d\xi d\varphi &\int^{\pi}_{-\pi}\int^1_{-1}w^2\Psi d\xi d\varphi &\int^{\pi}_{-\pi}\int^1_{-1}\Psi d\xi d\varphi \notag \\
  \int^{\pi}_{-\pi}\int^1_{-1}w^1\Psi d\xi d\varphi    &\int^{\pi}_{-\pi}\int^1_{-1}(w^1)^2\Upsilon d\xi d\varphi &\int^{\pi}_{-\pi}\int^1_{-1}w^1w^2\Upsilon d\xi d\varphi &\int^{\pi}_{-\pi}\int^1_{-1}w^1\Upsilon d\xi d\varphi \notag \\
  \int^{\pi}_{-\pi}\int^1_{-1}w^2\Psi d\xi d\varphi    &\int^{\pi}_{-\pi}\int^1_{-1}w^2w^1\Upsilon d\xi d\varphi &\int^{\pi}_{-\pi}\int^1_{-1}(w^2)^2\Upsilon d\xi d\varphi &\int^{\pi}_{-\pi}\int^1_{-1}w^2\Upsilon d\xi d\varphi \\
  \int^{\pi}_{-\pi}\int^1_{-1}\Psi d\xi d\varphi    &\int^{\pi}_{-\pi}\int^1_{-1}w^1\Upsilon d\xi d\varphi &\int^{\pi}_{-\pi}\int^1_{-1}w^2\Upsilon d\xi d\varphi &\int^{\pi}_{-\pi}\int^1_{-1}\Upsilon d\xi d\varphi \notag
  \end{pmatrix}\\
  &=
  \begin{pmatrix}
   n  U^0               & 4 n  TU^1U^0                        & 4 n  TU^2U^0                        &  n  T (4 U^1U^1 + 4 U^2U^2 + 3)\\
  4 n  TU^1 U^0         & 4 n  T^2 (6U^1U^1 + 1) U^0          & 24 n  T^2U^1 U^2 U^0                & 4 n  T^2 U^1 (6 U^1U^1 + 6 U^2U^2 + 5)\\
  4 n  TU^2 U^0         & 24 n  T^2 U^1 U^2 U^0               & 4 n  T^2(6 U^2U^2 + 1) U^0             & 4 n  T^2 U^2 (6 U^1U^1 + 6 U^2U^2+ 5)\\
  M^0_{03}               & M^0_{13}                             & M^0_{23}                             & 12 n  T^2 U^0 (2 U^1U^1 + 2U^2U^2 + 1)
  \end{pmatrix}, \label{M02D}
\end{align}
\end{small}
\begin{small}
\begin{align}\notag
  M_1&=\int_{\mathbb{R}^{3}}p^{1}g\vec{\Psi}\vec{\Psi}^Td\Xi:=\begin{pmatrix}
                                                                M^1_{00} &M^1_{01} &M^1_{02} &M^1_{03}\\
                                                                M^1_{10} &M^1_{11} &M^1_{12} &M^1_{13}\\
                                                                M^1_{20} &M^1_{21} &M^1_{22} &M^1_{23}\\
                                                                M^1_{30} &M^1_{31} &M^1_{32} &M^1_{33}
                                                           \end{pmatrix}\\
  &=
  \begin{pmatrix}
  \int^{\pi}_{-\pi}\int^1_{-1}w^1\Phi d\xi d\varphi    &\int^{\pi}_{-\pi}\int^1_{-1}(w^1)^2\Psi d\xi d\varphi &\int^{\pi}_{-\pi}\int^1_{-1}w^1w^2\Psi d\xi d\varphi &\int^{\pi}_{-\pi}\int^1_{-1}w^1\Psi d\xi d\varphi \notag \\
  \int^{\pi}_{-\pi}\int^1_{-1}(w^1)^2\Psi d\xi d\varphi    &\int^{\pi}_{-\pi}\int^1_{-1}(w^1)^3\Upsilon d\xi d\varphi &\int^{\pi}_{-\pi}\int^1_{-1}(w^1)^2w^2\Upsilon d\xi d\varphi &\int^{\pi}_{-\pi}\int^1_{-1}(w^1)^2\Upsilon d\xi d\varphi \notag \\
  \int^{\pi}_{-\pi}\int^1_{-1}w^1w^2\Psi d\xi d\varphi    &\int^{\pi}_{-\pi}\int^1_{-1}(w^1)^2w^2\Upsilon d\xi d\varphi &\int^{\pi}_{-\pi}\int^1_{-1}w^1(w^2)^2\Upsilon d\xi d\varphi &\int^{\pi}_{-\pi}\int^1_{-1}w^1w^2\Upsilon d\xi d\varphi \\
  \int^{\pi}_{-\pi}\int^1_{-1}w^1\Psi d\xi d\varphi    &\int^{\pi}_{-\pi}\int^1_{-1}(w^1)^2\Upsilon d\xi d\varphi &\int^{\pi}_{-\pi}\int^1_{-1}w^1w^2\Upsilon d\xi d\varphi &\int^{\pi}_{-\pi}\int^1_{-1}w^1\Upsilon d\xi d\varphi \notag
  \end{pmatrix}\\
  &=
  \begin{pmatrix}
     n  U^1             &  n  T(4U^1U^1 + 1)         & 4 n  TU^1U^2              & 4 n  TU^1U^0\\
     n  T(4U^1U^1 + 1)  & 12 n  T^2 U^1(2U^1U^1 + 1)  & 4 n  T^2 U^2(6U^1U^1 + 1)  & 4 n  T^2 U^0(6U^1U^1 + 1)\\
    4 n  TU^1U^2        & 4 n  T^2 U^2(6U^1U^1 + 1)   & 4 n  T^2 U^1(6U^2U^2 + 1)  & 24 n  T^2 U^1U^2U^0\\
    4 n  TU^1U^0        & 4 n  T^2 U^0(6U^1U^1 + 1)   & 24 n  T^2 U^1U^2U^0        & 4 n  T^2 U^1(6U^1U^1 + 6U^2U^2 + 5)
  \end{pmatrix},\label{M12D}
\end{align}
\end{small}
and
  \begin{small}
\begin{align}\notag
  M_2&=\int_{\mathbb{R}^{3}}p^{2}g\vec{\Psi}\vec{\Psi}^Td\Xi:=\begin{pmatrix}
                                                                M^2_{00} &M^2_{01} &M^2_{02} &M^2_{03}\\
                                                                M^2_{10} &M^2_{11} &M^2_{12} &M^2_{13}\\
                                                                M^2_{20} &M^2_{21} &M^2_{22} &M^2_{23}\\
                                                                M^2_{30} &M^2_{31} &M^2_{32} &M^2_{33}
                                                           \end{pmatrix}\\
  &=
\begin{pmatrix}
  \int^{\pi}_{-\pi}\int^1_{-1}w^2\Phi d\xi d\varphi    &\int^{\pi}_{-\pi}\int^1_{-1}w^2w^1\Psi d\xi d\varphi  & \int^{\pi}_{-\pi}\int^1_{-1}(w^2)^2\Psi d\xi d\varphi &\int^{\pi}_{-\pi}\int^1_{-1}w^2\Psi d\xi d\varphi \notag \\
  \int^{\pi}_{-\pi}\int^1_{-1}w^2w^1\Psi d\xi d\varphi    &\int^{\pi}_{-\pi}\int^1_{-1}w^2(w^1)^2\Upsilon d\xi d\varphi &\int^{\pi}_{-\pi}\int^1_{-1}w^1(w^2)^2\Upsilon d\xi d\varphi &\int^{\pi}_{-\pi}\int^1_{-1}w^1w^2\Upsilon d\xi d\varphi \notag \\
  \int^{\pi}_{-\pi}\int^1_{-1}(w^2)^2\Psi d\xi d\varphi    &\int^{\pi}_{-\pi}\int^1_{-1}(w^2)^2w^1\Upsilon d\xi d\varphi &\int^{\pi}_{-\pi}\int^1_{-1}(w^2)^3\Upsilon d\xi d\varphi &\int^{\pi}_{-\pi}\int^1_{-1}(w^2)^2\Upsilon d\xi d\varphi \notag\\
  \int^{\pi}_{-\pi}\int^1_{-1}w^2\Psi d\xi d\varphi    &\int^{\pi}_{-\pi}\int^1_{-1}w^1w^2\Upsilon d\xi d\varphi &\int^{\pi}_{-\pi}\int^1_{-1}(w^2)^2\Upsilon d\xi d\varphi &\int^{\pi}_{-\pi}\int^1_{-1}w^2\Upsilon d\xi d\varphi \notag
\end{pmatrix}\\
  &=
  \begin{pmatrix}
     n  U^2                &  4 n  TU^1U^2             &  n  T(4U^2U^2 + 1)          & 4 n  TU^2U^0\\
    4 n  TU^1U^2         & 4 n  T^2U^2(6U^1U^1 + 1)  & 4 n  T^2U^1(6U^2U^2 + 1)    & 24 n  T^2U^1U^2U^0\\
     n  T(4U^2U^2 + 1)   & 4 n  T^2U^1(6U^2U^2 + 1)  & 12 n  T^2U^2(2U^2U^2 + 1)   & 4 n  T^2U^0(6U^2U^2 + 1)\\
    4 n  TU^2U^0         & 24 n  T^2U^1U^2U^0        & 4 n  T^2U^0(6U^2U^2 + 1)    & 4 n  T^2U^2(6U^1U^1 + 6U^2U^2 + 5)
  \end{pmatrix},\label{M22D}
\end{align}
\end{small}
where  $w^1=\xi, w^2=\sqrt{1-\xi^2}\sin\varphi, w^3=\sqrt{1-\xi^2}\cos\varphi$, and
\begin{align}
  \Phi(x,y,\xi,\varphi) &=\frac{1}{4\pi}\frac{ n (x,y)}{(U^0(x,y)-w^1U^1(x,y)-w^2U^2(x,y))^3},\notag\\
  \Psi(x,y,\xi,\varphi) &=\frac{3}{4\pi}\frac{( n  T)(x,y)}{(U^0(x,y)-w^1U^1(x,y)-w^2U^2(x,y))^4},\\
  \Upsilon(x,y,\xi,\varphi) &= \frac{3}{\pi}\frac{( n  T^2)(x,y)}{(U^0(x,y)-w^1U^1(x,y)-w^2U^2(x,y))^5}\notag.
\end{align}

\subsubsection{Equilibrium velocity distribution $g_h(x,y,t,\vec p)$}
\label{sec:equi2DEuler}
Using $\vec{W}_{0}:=\vec{W}_{i+\frac{1}{2},j}^{n}$ derived in Section \ref{sec:fluxevolution2D} and the cell averages $\vec{\bar{W}}_{i+1,j}$ and $\vec{\bar{W}}_{i,j}$  reconstructs a  linear polynomial
\[
\vec{W}_{0}(x)=\vec{W}_{0}+\vec{W}_{0}^{x}(x-x_{i+\frac{1}{2}}) + \vec{W}_{0}^{y}(y-y_j),
\]
where $\vec{W}_{0}^{x}=\frac{1}{\Delta x}(\vec{\bar{W}}_{i+1,j}-\vec{\bar{W}}_{i,j})$ and $\vec{W}_{0}^{y}=\frac{1}{2\Delta y}(\vec{W}^{n}_{i+\frac12,j+1}-\vec{W}^{n}_{i+\frac12,j-1})$.
Again using the Taylor series expansion of $g$ at the cell interface $(x_{i+\frac{1}{2}},y_j)$ gives
\begin{equation}
\label{eq:gh2D}
g_{h}(x,y,t,\vec{p})=g_{0}(1+a_{0}(x-x_{i+\frac{1}{2}})+b_0(y-y_j)+A_{0}(t-t^n)),
\end{equation}
where $(a_0,b_0,A_0)$ are the values of $(a,b,A)$ at the point $(x_{i+\frac{1}{2}},y_j, t_n)$.
Similarly, 
the linear systems for $a_0, b_0$ and $A_0$ can be derived as follows
\[
<a_{0}{p^0}>=\vec{W}_{0}^{x},\quad <b_{0}{p^0}>=\vec{W}_{0}^{y},\quad <A_{0}p^{0}+a_{0}p^{1}+b_{0}p^{2}>=0,
\]
or
\begin{equation}\label{eq:abAforg}
M_{0}^{0}\vec{a}_{0}=\vec{W}_{0}^{x},\quad M_{0}^{0}\vec{b}_{0}=\vec{W}_{0}^{y}, \quad M_{0}^{0}\vec{A}_{0}= -M_{1}^{0}\vec{a}_{0} -M_{2}^{0}\vec{b}_{0},
\end{equation}
where the elements of $M_0^0, M_1^0$ and $M_2^0$ are given by  \eqref{M02D}, \eqref{M12D}, and \eqref{M22D} with $ n ,T,U^{\alpha}$   instead of $ n ^n_{i+\frac{1}{2},j}, T^n_{i+\frac{1}{2},j}$ and $U^{n,\alpha}_{i+\frac{1}{2},j}$.

Up to now, the initial gas distribution function $f_{h,0}$ and the equilibrium state $g_h$
have been given. Substituting \eqref{eq:fh02D} and \eqref{eq:gh2D} into \eqref{eq:2DAWsoluapprox}
 gives
\begin{small}
\begin{align}
 &\hat{f}(x_{i+\frac{1}{2}},y_j,t,\vec{p})
 =g_0\left(1-\exp\left(-\frac{U_{\alpha,i+\frac{1}{2},j}^np^{\alpha}}{p^0\tau}\tilde{t}\right)\right)\notag\\
&+g_0a_0v_1\left(\left(\tilde{t}+\frac{p^0\tau}{U_{\alpha,i+\frac{1}{2},j}^np^{\alpha}}\right)\exp\left(-\frac{U_{\alpha,i+\frac{1}{2},j}^np^{\alpha}}{p^0\tau}\tilde{t}\right)-\frac{p^0\tau}{U_{\alpha,i+\frac{1}{2},j}^np^{\alpha}}\right)\notag\\
&+g_0b_0v_2\left(\left(\tilde{t}+\frac{p^0\tau}{U_{\alpha,i+\frac{1}{2},j}^np^{\alpha}}\right)\exp\left(-\frac{U_{\alpha,i+\frac{1}{2},j}^np^{\alpha}}{p^0\tau}\tilde{t}\right)-\frac{p^0\tau}{U_{\alpha,i+\frac{1}{2},j}^np^{\alpha}}\right)\notag\\
&+g_0A_0\left(\tilde{t}-\frac{p^0\tau}{U_{\alpha,i+\frac{1}{2},j}^np^{\alpha}}\left(1-\exp\left(-\frac{U_{\alpha,i+\frac{1}{2},j}^np^{\alpha}}{p^0\tau}\tilde{t}\right)\right)\right)\notag\\
&+H[v_1]g_L\left(1-\frac{\tau}{U_{\alpha,i+\frac{1}{2},j,L}^np^{\alpha}}(p^0A_L+p^1a_L+p^2b_L)-a_Lv_1\tilde{t}-b_Lv_2\tilde{t}\right)\exp\left(-\frac{U_{\alpha,i+\frac{1}{2},j,L}^np^{\alpha}}{p^0\tau}\tilde{t}\right)\notag\\
&+(1-H[v_1])g_R\left(1-\frac{\tau}{U_{\alpha,i+\frac{1}{2},j,R}^np^{\alpha}}(p^0A_R+p^1a_R+p^2a_R)-a_Rv_1\tilde{t}-b_Rv_2\tilde{t}\right)\exp\left(-\frac{U_{\alpha,i+\frac{1}{2},j,R}^np^{\alpha}}{p^0\tau}\tilde{t}\right),\notag
\end{align}
\end{small}
where $\tilde{t}=t-t_n$.
Combining this $\hat{f}(x_{i+\frac{1}{2}},y_j,t,\vec{p})$ with \eqref{eq:2DF}  can get the numerical flux $\hat{\vec{F}}^n_{i+\frac{1}{2},j}$. The numerical flux $\hat{\vec{G}}^n_{i,j+\frac{1}{2}}$ can be obtained in the same procedure.

\subsection{2D Navier-Stokes equations}
\label{sec:GKSNS}
Because  the previous simple expansion \eqref{eq:ceEuler} or \eqref{eq:ceEuler2D000}
  cannot give the  Navier-Stokes equations \eqref{eq-NS01}-\eqref{eq-NS03},
  one has to use the complicate Chapman-Enskog expansion  \eqref{ce}-\eqref{dev}
 to design the genuine BGK schemes for the Navier-Stokes equations.
On the other hand, for the  Navier-Stokes equations,
calculating the macroscopic quantities $n, U^{\alpha}$, and $p$
needs the value of the fluxes $\vec F^k$
besides  $\vec W$. More specially,
one has to first calculate the {energy-momentum}
tensor $T^{\alpha\beta}$ and {particle} four-flow $N^{\alpha}$ from the kinetic level and
then use  Theorem \ref{thm:NT}  to calculate $n, U^{\alpha}$, and $p$.
It shows that there exists a very big difference between
the genuine BGK schemes for the Euler and Navier-Stokes equations.


In order to obtain $T^{\alpha\beta}$ and $N^{\alpha}$ at $t=t_{n+1}$ from the kinetic level, multiplying \eqref{2DAW} by
$p^k/p^0$  gives
\begin{align}
\label{eq:boltzmannNS2}
      p^k\frac{\partial f}{\partial t} + \frac{p^kp^1}{p^0}\frac{\partial f}{\partial x} + \frac{p^kp^2}{p^0}\frac{\partial f}{\partial y} = \frac{p^kU_{\alpha}p^{\alpha}(g-f)}{p^0\tau}, \ k=1,2.
\end{align}
Taking the moments of \eqref{2DAW} and \eqref{eq:boltzmannNS2}  and integrating them over the space-time domain $I_{i,j}\times[t_n,t_{n+1})$ , respectively, yield
\begin{equation}
  \label{eq:moment1}
  \vec{\bar{W}}^{n+1}_{\alpha,i,j} = \vec{\bar{W}}^{n}_{\alpha,i,j} - \frac{\Delta t_n}{\Delta x}(\hat{\vec{F}}^{n}_{\alpha,i+\frac{1}{2},j} - \hat{\vec{F}}^n_{\alpha,i-\frac{1}{2},j}) - \frac{\Delta t_n}{\Delta y}(\hat{\vec{G}}^n_{\alpha,i,j+\frac{1}{2}} - \hat{\vec{G}}^n_{\alpha,i,j-\frac{1}{2}}) +\vec{S}^n_{\alpha,i,j} ,\ \alpha=0,1,2,
\end{equation}
where
\begin{equation}
\begin{aligned}
\label{eq:relationNS}
    &\vec{\bar{W}}^{n}_{\alpha,i,j} = (N^\alpha,T^{1\alpha},T^{2\alpha},T^{0\alpha})^{n,T}_{i,j},\\
    &\hat{\vec{F}}^{n}_{\alpha,i+\frac{1}{2},j} = \frac{1}{\Delta t_n}\int_{\mathbb{R}^3}\int_{t_n}^{t_{n+1}}\vec{\Psi}\frac{p^1p^\alpha}{p^0}\hat{f}(x_{i+\frac{1}{2}},y_j,t)dtd\varXi,\\
    &\hat{\vec{G}}^n_{\alpha,i,j+\frac{1}{2}} = \frac{1}{\Delta t_n}\int_{\mathbb{R}^3}\int_{t_n}^{t_{n+1}}\vec{\Psi}\frac{p^2p^\alpha}{p^0}\hat{f}(x_i,y_{j+\frac{1}{2}},t)dtd\varXi,\\
    &\vec{S}^n_{0,i,j} =0,\
    \vec{S}^n_{k,i,j} = \int_{\mathbb{R}^3}\int_{t_n}^{t_{n+1}}\vec{\Psi}\frac{p^kU_{\alpha}p^{\alpha}}{p^0\tau}(g(x_i,y_j,t)-\hat{f}(x_i,y_j,t))dtd\varXi, \ k=1,2.
\end{aligned}
\end{equation}
Our task is to get the approximate distributions $\hat{f}(x_{i+\frac{1}{2}},y_j,t)$ and $\hat{f}(x_{i},y_{j+\frac{1}{2}},t)$ for the numerical fluxes
and $\hat{f}(x_{i},y_j,t)$ and $g(x_i,y_j,t)$ for the source terms.
The following will focus on the derivation of  $\hat{f}(x_{i+\frac{1}{2}},y_j,t)$ with the help of  the analytical solution \eqref{eq:2DAWsolu}
of the 2D Anderson-Witting model.

\subsubsection{Initial   distribution function $f_{h,0}(x,y,t,\vec p)$}
This section derives the initial   distribution function $f_{h,0}$ for $\hat{f}(x_{i+\frac{1}{2}},y_j,t)$. The Chapman-Enskog expansion  \eqref{ce}-\eqref{dev} is rewritten as follows
\begin{equation}\label{eq:CENS}
   f(x,y,t,\vec{p})=g\left(1-\frac{\tau}{U_{\alpha}p^{\alpha}}\left(A^{ce}p^0
   +a^{ce}p^1+b^{ce}p^2+c^{ce}p^3\right)\right)+O(\tau^2),
\end{equation}
where
$A^{ce}=A_{\beta}^{ce}p^{\beta}+A^{ce}_4$, $a^{ce} =a^{ce}_{\beta}p^{\beta}+a^{ce}_4$,
$b^{ce} =b^{ce}_{\beta}p^{\beta}+b^{ce}_4$, $c^{ce} =c^{ce}_{\beta}p^{\beta}+c^{ce}_4$,
and
\begin{align}\label{cecoef}\begin{aligned}
  A^{ce}_{\beta} &= -\frac{1}{T}\nabla^{<0}U^{\beta>} + \frac{U_{\beta}}{T^2}(\nabla^{0}T-\frac{T}{ n  h}\nabla^{0}p),\quad
  A^{ce}_4 = -\frac{h}{T^2}(\nabla^{0}T-\frac{T}{ n  h}\nabla^{0}p),
  \\
  a^{ce}_{\beta} &= \frac{1}{T}\nabla^{<1}U^{\beta>} - \frac{U_{\beta}}{T^2}(\nabla^{1}T-\frac{T}{ n  h}\nabla^{1}p),\quad
  a^{ce}_4 = \frac{h}{T^2}(\nabla^{1}T-\frac{T}{ n  h}\nabla^{1}p),\
  \\
  b_{\beta} &= \frac{1}{T}\nabla^{<2}U^{\beta>} - \frac{U_{\beta}}{T^2}(\nabla^{2}T-\frac{T}{ n  h}\nabla^{2}p),\quad
  b^{ce}_4 = \frac{h}{T^2}(\nabla^{2}T-\frac{T}{ n  h}\nabla^{2}p),\\
  c^{ce}_{\beta} &= \frac{1}{T}\nabla^{<3}U^{\beta>} - \frac{U_{\beta}}{T^2}(\nabla^{3}T-\frac{T}{ n  h}\nabla^{3}p),\quad
  c^{ce}_4 = \frac{h}{T^2}(\nabla^{3}T-\frac{T}{ n  h}\nabla^{3}p).
\end{aligned}\end{align}
It is observed from those expressions of $A^{ce}, a^{ce}, b^{ce}$, and $c^{ce}$
that one has to compute the time derivatives, which are not required  in the Euler case.
Those time derivatives are {approximately} computed by using the following second-order extrapolation method:
for any smooth function $h(t)$, the first order derivative at $t=t_n$ is numerically obtained by
\begin{small}
\begin{equation}
\label{eq:extra}
    h_t(t_{n}) = \frac{h(t_{n-2})(t_{n-1}-t_n)^2-h(t_{n-1})(t_{n-2}-t_n)^2-h(t_{n})((t_{n-1}-t_n)^2-(t_{n-2}-t_n)^2)}
    {(t_{n-2}-t_{n})(t_{n-1}-t_n)^2-(t_{n-1}-t_{n})(t_{n-2}-t_n)^2}.
\end{equation}
\end{small}
Using  the Chapman-Enskog expansion \eqref{eq:CENS} and  the Taylor series expansion in terms of $x$
gives the initial velocity distribution
\begin{small}
\begin{equation}
\label{eq:fh0NS}
f_{h,0}(x,y,t^n,\vec{p})=\left\{\begin{aligned}
&g_{L}\left(1-\frac{\tau}{U_{\alpha,L}p^{\alpha}}(p^{0}A_{L}^{ce}+p^{1}a_{L}^{ce}+p^{2}b_{L}^{ce}+p^{3}c_{L}^{ce})+a_{L}\tilde{x}+b_{L}\tilde{y}\right),\tilde{x}<0,\\
&g_{R}\left(1-\frac{\tau}{U_{\alpha,R}p^{\alpha}}(p^{0}A_{R}^{ce}+p^{1}a_{R}^{ce}+p^{2}b_{R}^{ce}+p^{3}c_{R}^{ce})+a_{R}\tilde{x}+b_{R}\tilde{y}\right),\tilde{x}>0,
\end{aligned}
\right.
\end{equation}
\end{small}
where $\tilde{x}=x-x_{i+\frac{1}{2}}, \tilde{y}=y-y_{j}$, $g_{L}$ and $g_{R}$ denote the left and right J$\ddot{\text{u}}$ttner distributions at   $x_{i+\frac{1}{2}}$ with  $y=y_j,t=t_n$, the Taylor expansion coefficients $(a_{L},b_{L})$ and $(a_{R},b_{R})$ are calculated by using the same procedure as in the Euler case,
while the Chapman-Enskog expansion coefficients  $a_{L}^{ce}, a_{R}^{ce}, b_{L}^{ce}, b_{R}^{ce}, c_{L}^{ce}, c_{R}^{ce}$ and  $A_{L}^{ce},A_{R}^{ce}$ are calculated by \eqref{cecoef}.

\subsubsection{Equilibrium distribution functions $g_h(x,y,t,\vec p)$}
In order to obtain the equilibrium  distribution functions $g_h(x,y,t,\vec p)$ for $\hat{f}(x_{i+\frac{1}{2}},y_j,t)$, the particle four-flow $N^{\alpha}$ and the energy-momentum tensor $T^{\alpha\beta}$ at $(x_{i+\frac{1}{2}},y_j)$ and $t=t^n$  are defined by
\begin{align*}
(N^{\alpha},T^{\alpha1},T^{\alpha2},T^{\alpha0})_{i+\frac{1}{2},j}^{n,T}:=\int_{\mathbb{R}^{3}\cap{p^{1}>0}}\vec{\Psi}p^{\alpha}f_{L}d\Xi+\int_{\mathbb{R}^{3}\cap{p^{1}<0}}\vec{\Psi}p^{\alpha}f_{R}d\Xi,\ \alpha=0,1,2,
\end{align*}
where $f_L$ and $f_R$ are  the left and right limits of $f_{h,0}$ with $y=y_j$ 
at   $x=x_{i+\frac{1}{2}}$.
Using those definitions and Theorem \ref{thm:NT}, the macroscopic quantities   $ n ^n_{i+\frac{1}{2},j}, T^n_{i+\frac{1}{2},j}$ and {$U_{\alpha,i+\frac{1}{2},j}^n$} can be obtained, and then one gets the J$\ddot{\text{u}}$ttner distribution function $g_0$ at $(x_{i+\frac{1}{2}},y_j,t_n)$.
Similar to Section \ref{sec:equi2DEuler}, we reconstruct a cell-vertex based linear polynomial
and do the first-order Taylor series expansion of $g$ at the cell interface $(x_{i+\frac{1}{2}},y_j)$, see \eqref{eq:gh2D}. However, it is different from the Euler case that
 $A_0$ is obtained by
\[M^0_0\vec{A}_0=\vec{W}^t_0,\]
where $\vec{W}^t_0$ is calculated by using the second-order extrapolation \eqref{eq:extra}.
After those, substituting $f_{h,0}$ and $g_h$   into \eqref{eq:2DAWsoluapprox} gets $\hat{f}(x_{i+\frac{1}{2}},y_j,t)$. The distribution $\hat{f}(x_{i},y_{j+\frac{1}{2}},t)$
can be similarly obtained.

\subsubsection{Derivation of  source terms $\vec{S}_{{1},i,j}$ and $\vec{S}_{{2},i,j}$}
The rest is to calculate $\hat{f}(x_i,y_j,t)$ and $g(x_i,y_j,t)$ for the source terms $\vec{S}_{{1},i,j}$ and $\vec{S}_{{2},i,j}$.
The procedure is the same as the above except for taking the first-order Taylor series expansion at the cell-center $(x_i,y_j)$.   
To be more specific, $g$ and $f_0$ in the analytical solution  \eqref{eq:2DAWsoluapprox} of 2D Anderson-Witting model are replaced  with 
\begin{equation}
\label{eq:ghsource}
g_{h}(x,y,t,\vec{p})=g_{0}(1+a_{0}(x-x_{i})+b_0(y-y_j)+A_{0}(t-t_n)),
\end{equation}
and
\begin{equation}
  f_{h,0}({x,y},\vec{p})=
  g_{0}\left(1-\frac{\tau}{U_{\alpha,0}p^{\alpha}}(A_0^{ce}p^0+a_0^{ce}p^1+b_0^{ce}p^2+c_0^{ce}p^3)+a_{0}\tilde{x}+b_{0}\tilde{y}\right),
\end{equation}
where $(a_0,b_0,A_0)$ are {the Taylor expansion coefficients} at $(x_{i},y_j,t_n)$  calculated by the same procedure as that for $\hat{f}(x_{i+\frac{1}{2}},y_j,t)$,
 $\tilde{x}=x-x_{i}$, $\tilde{y}=y-y_{j}$, $g_{0}$ denotes the J$\ddot{\text{u}}$ttner distribution at $(x_i,y_j, t_n)$, $a_{0}^{ce}, b_{0}^{ce}, c_{0}^{ce}$ and  $A_{0}^{ce}$ are the Chapman-Enskog expansion coefficients at $(x_i,y_j,t_n)$.
 It is worth noting that since $f_{h,0}$ is continuous at  $(x_i, y_j)$,
 there is no need to consider whether the left or right states should be taken here.
 The subroutine for the coefficients  in \eqref{eq:fh0NS} can be used to get those in $f_{h,0}({x,y},\vec{p})$.

In order to define the equilibrium state $g(x_i,y_j,t)$ in the source term, firstly we need to figure out the corresponding macroscopic quantities such as $N^{\alpha}$ and $T^{\alpha\beta}$ which can be obtained by taking the moments of $\hat{f}(x_i,y_j,t)$. Using the Theorem \ref{thm:NT}, the macroscopic quantities such as $ n , T$ and ${\vec{u}}$ can be obtained. Thus the J$\ddot{\text{u}}$ttner distribution function at cell center $(x_{i},y_j)$ is derived according to the definition.

Until now, all distributions are derived and the second-order accurate genuine BGK scheme \eqref{eq:moment1} is
developed for the 2D ultra-relativistic  Navier-Stokes equations.

\section{Numerical experiments}
\label{sec:test}
This section will solve several 1D and 2D problems on the ultra-relativistic fluid flow
to demonstrate the accuracy and effectiveness of the present genuine BGK schemes, which will be compared to  the second-order accurate BGK-type  and   KFVS schemes \cite{abdel2013,qamar2003}.
The collision time $\tau$  is taken as
\[
\tau=\tau_m+C_{2}\Delta t^{\alpha}_{n}\frac{|P_{L}-P_{R}|}{P_{L}+P_{R}},
\]
with $\tau_m = \frac{5\mu}{4p}$ for the viscous flow and $\tau_m=C_{1}\Delta t^{\alpha}_n$ for the inviscid flow, $C_1$, $C_2$ and
$\alpha$ are three constants, $P_L, P_R$ are the left and right limits of the pressure at the cell interface, respectively. Unless specifically stated, this section takes $C_1=0.001, C_2=1.5$ and $\alpha=1$,
the time step-size $\Delta t_n$ is determined by the CFL condition \eqref{TimeStep1D} or \eqref{TimeStep2D} with the CFL number of 0.4, and the characteristic variables are reconstructed with the van Leer limiter.

\subsection{1D Euler case}
\begin{example}[Accuracy test]\label{ex:accurary1D}\rm To check the accuracy of our BGK method, we first solve a smooth problem which describes a sine wave propagating  periodically in the domain $\Omega=[0,1]$. The initial conditions are taken as
 \[ n (x,0)= 1+0.5\sin(2\pi x), \quad u_1(x,0)=0.2, \quad p(x,0)=1,\]
 and corresponding exact solutions are given by
 \[ n (x,t)= 1+0.5\sin(2\pi (x-0.2t)), \quad u_1(x,t)=0.2, \quad p(x,t)=1.\]
The computational domain $\Omega$ is divided into $N$ uniform cells and the periodic boundary conditions are specified at $x=0,1$.
 \scriptsize
 \begin{table}[H]
   \setlength{\abovecaptionskip}{0.cm}
   \setlength{\belowcaptionskip}{-0.cm}
   \caption{Example \ref{ex:accurary1D}: Numerical errors of $n$ in $l^1, l^2$-norms and convergence rates at $t = 0.2$ with or without limiter.}\label{accuracy}
   \begin{center}
     \begin{tabular}{*{9}{c}}
       \toprule
       \multirow{2}*{$N$} &\multicolumn{4}{c}{With limiter} &\multicolumn{4}{c}{Without limiter}\\
       \cmidrule(lr){2-5}\cmidrule(lr){6-9}
       & $l^1$ error  & $l^1$ order  &  $l^2$ error  &  $l^2$ order & $l^1$ error  & $l^1$ order  &  $l^2$ error  &  $l^2$ order\\
       \midrule
       25   & 1.6793e-03 &  --       &2.5667e-03 & --     &6.0337e-04 &-       &6.7007e-04  &-  \\
       50   & 4.9516e-04 &  1.7619 &8.2151e-04 &1.6436  &1.5275e-04   &1.9819  &1.6965e-04  &1.9818\\
       100  & 1.3012e-04 &  1.9281 &2.6823e-04 &1.6148  &3.8305e-05   &1.9956  &4.2559e-05  &1.9950\\
       200  & 3.4917e-05 &  1.8978 &8.5622e-05 &1.6474  &9.5628e-06   &2.0020  &1.0621e-05  &2.0025\\
       400  & 8.2820e-06 &  2.0759 &2.6141e-05 &1.7117  &2.3904e-06   &2.0002  &2.6550e-06  &2.0001\\
       \bottomrule
     \end{tabular}
   \end{center}
 \end{table}
\end{example}
Table \ref{accuracy} gives the $l^1$- and $l^2$-errors at $t=0.2$ and corresponding convergence rates for the BGK scheme
with $\alpha = 2$ and $C_1=C_2=1$.
The results show that a second-order rate of convergence can be obtained for our BGK scheme although
the van Leer limiter loses  slight accuracy.

\begin{example}[Riemann problem I]\rm \label{ex:1DRP1} 
 This is a Riemann problem with the following initial data
 \begin{equation}
   \label{exeq:1DRP1}
  (n,u_1,p)(x,0)=
   \begin{cases}
     (1.0,1.0,3.0),& x<0.5,\\
     (1.0,-0.5,2.0),& x>0.5.
   \end{cases}
 \end{equation}
\end{example}
The initial discontinuity will evolve as a left-moving shock wave,  a right-moving contact discontinuity,
and a right-moving shock wave. Fig. \ref{fig:1DRP1}
displays the numerical results at $t=0.5$ and their close-ups obtained by using
our BGK scheme (``{$\circ$}"), the BGK-type scheme (``{$\times$}"),
and the KFVS scheme (``{+}")
with 400 uniform cells in the domain $[0,1]$, where the solid lines denote the exact solutions.
It can be seen that our BGK scheme resolves the contact discontinuity better than the second-order accurate BGK-type and
KFVS schemes,
and they can well capture such wave configuration.

\begin{figure}[htbp]
 \centering
 \subfigure[$ n $]{
 \includegraphics[width=0.3\textwidth]{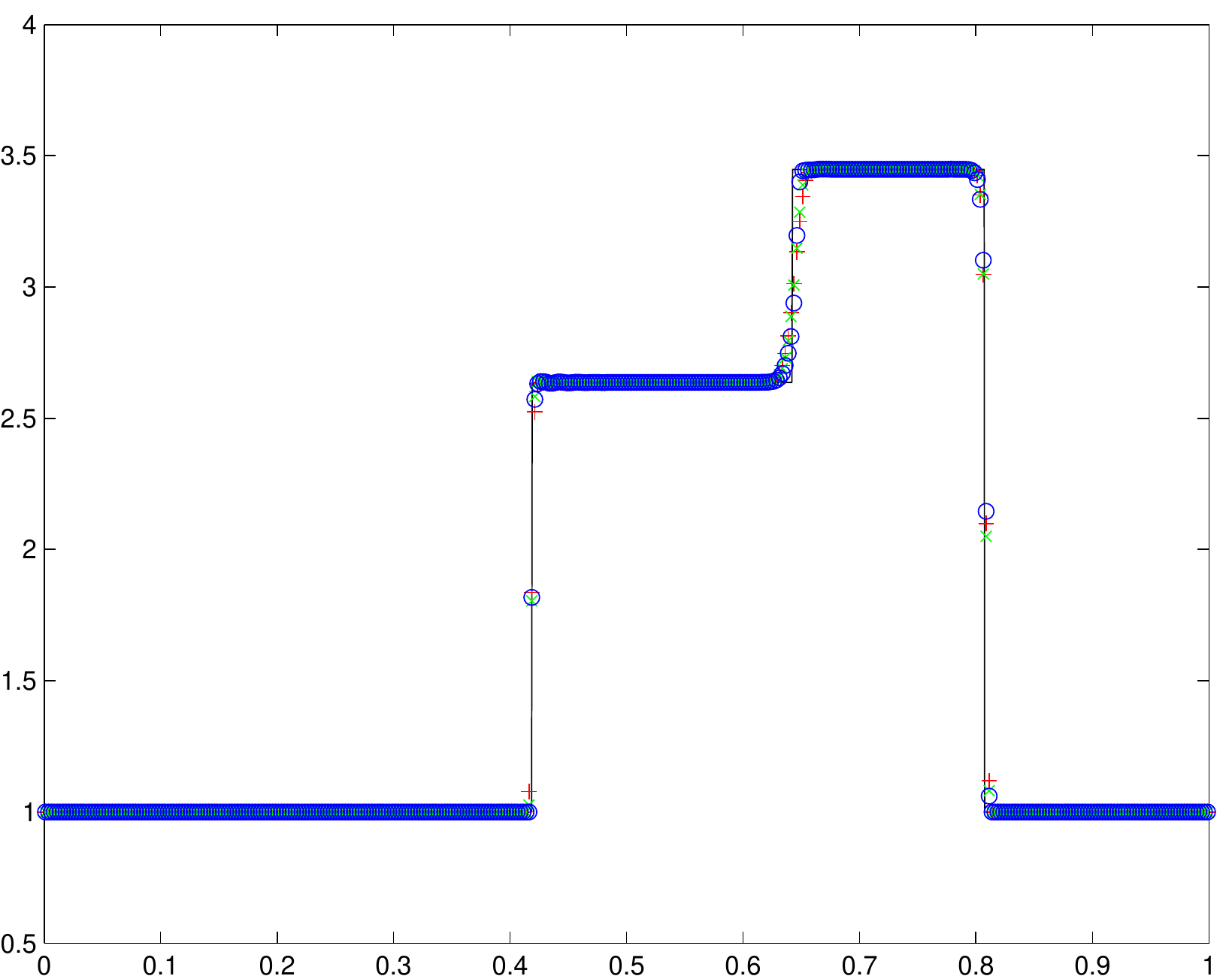}
 }
 \subfigure[$u_1$]{
 \includegraphics[width=0.3\textwidth]{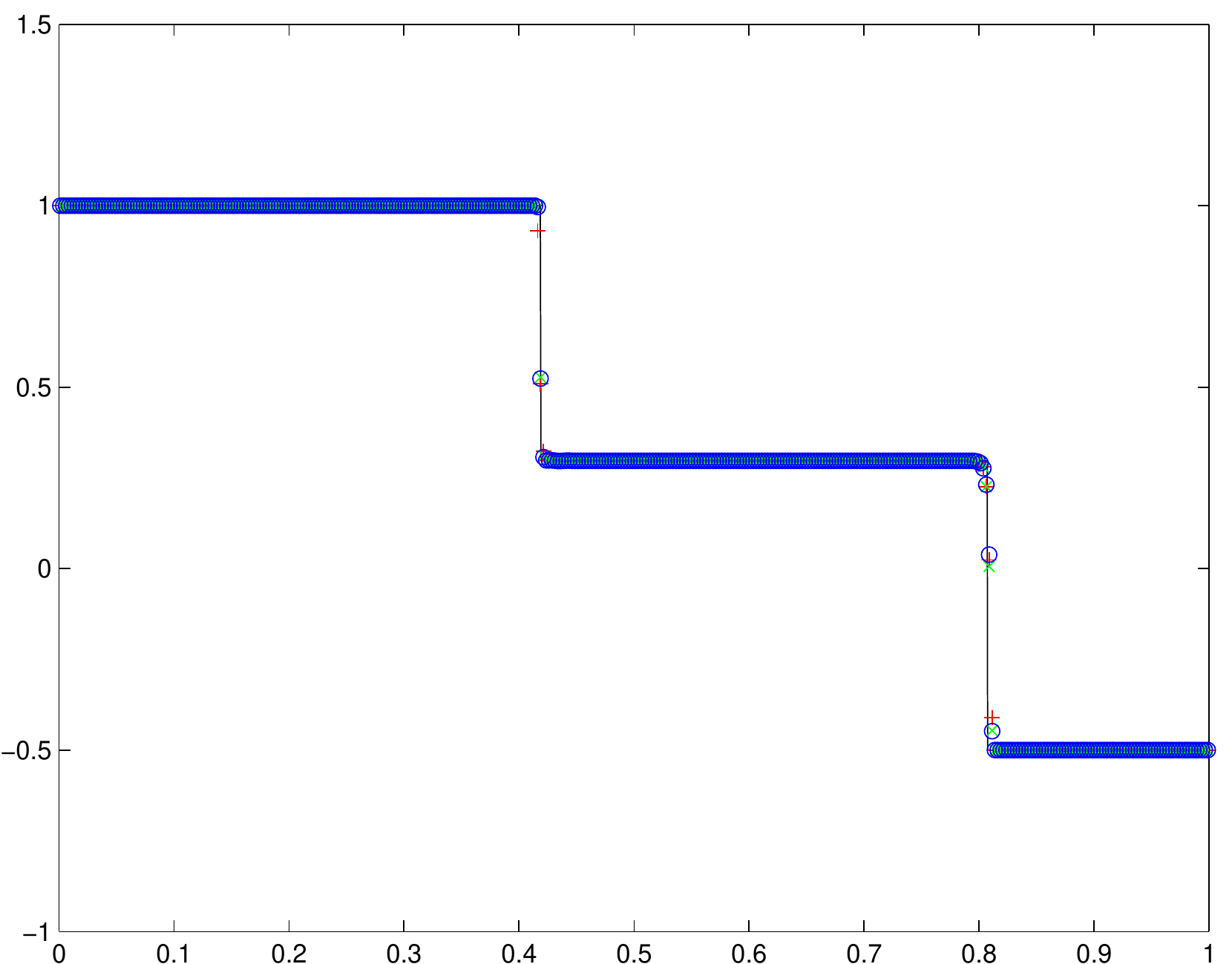}
 }
 \subfigure[$p$]{
 \includegraphics[width=0.3\textwidth]{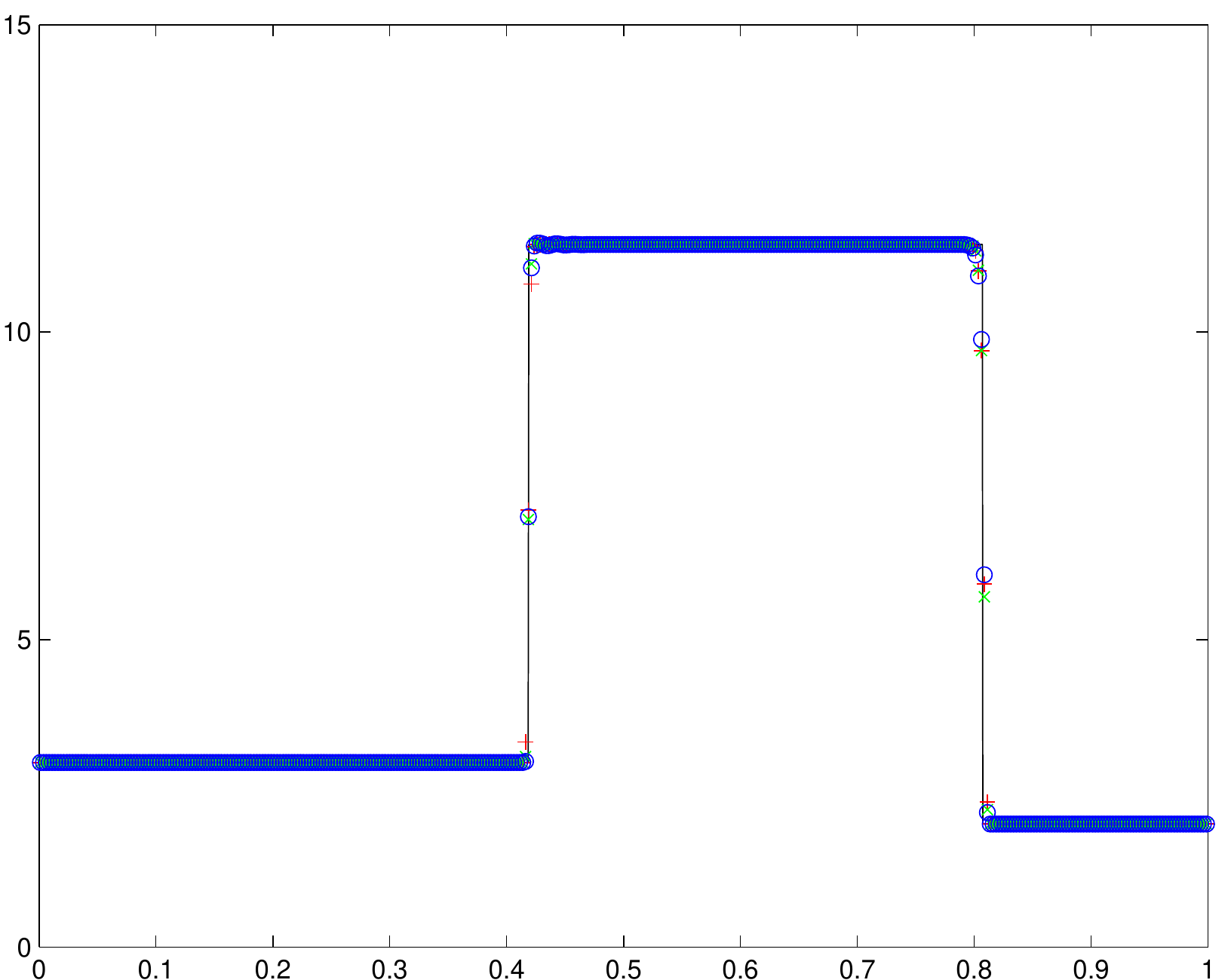}
 }
 \subfigure[close-up of $ n $]{
  \includegraphics[width=0.3\textwidth]{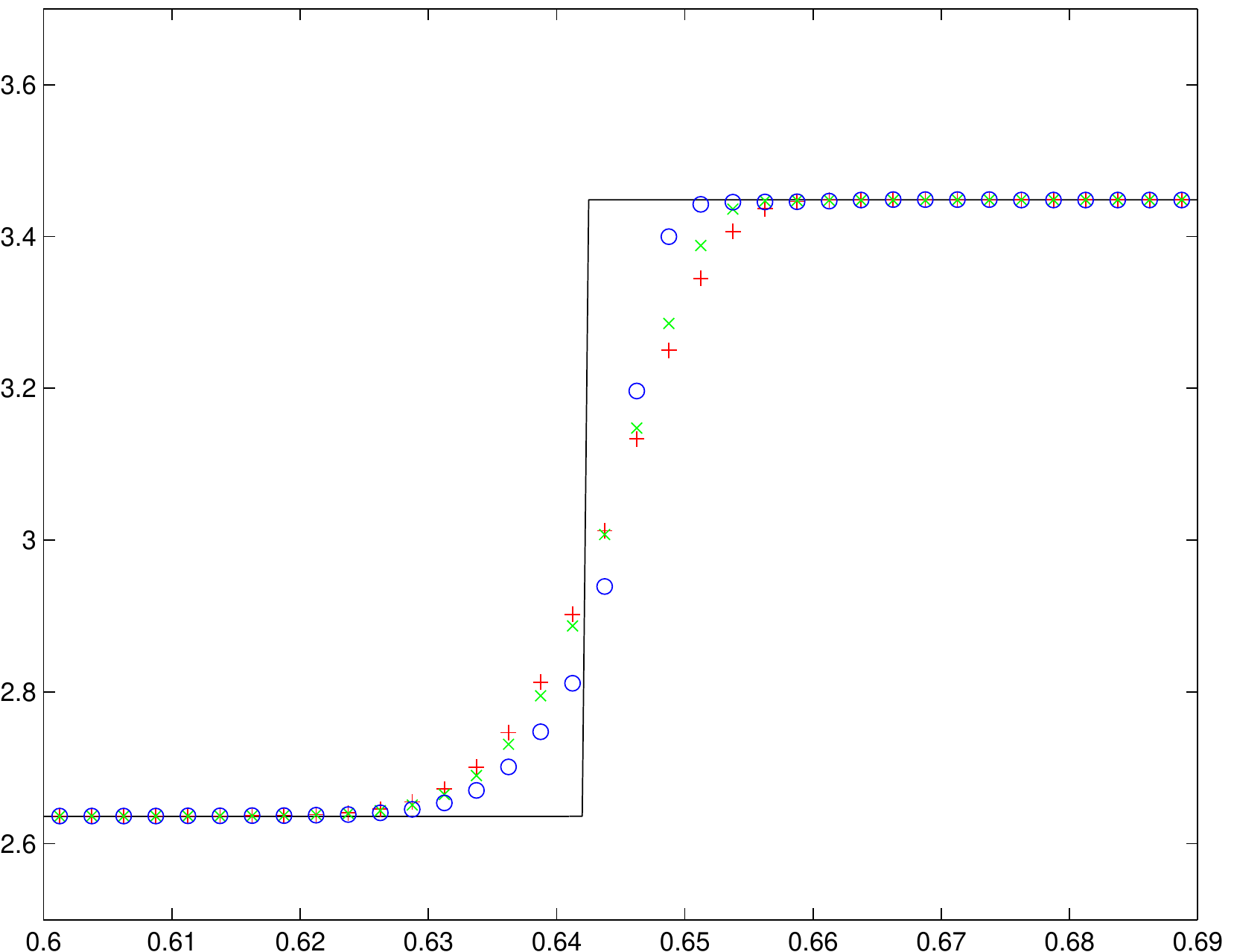}
 }
 \subfigure[close-up of $u_1$]{
   \includegraphics[width=0.3\textwidth]{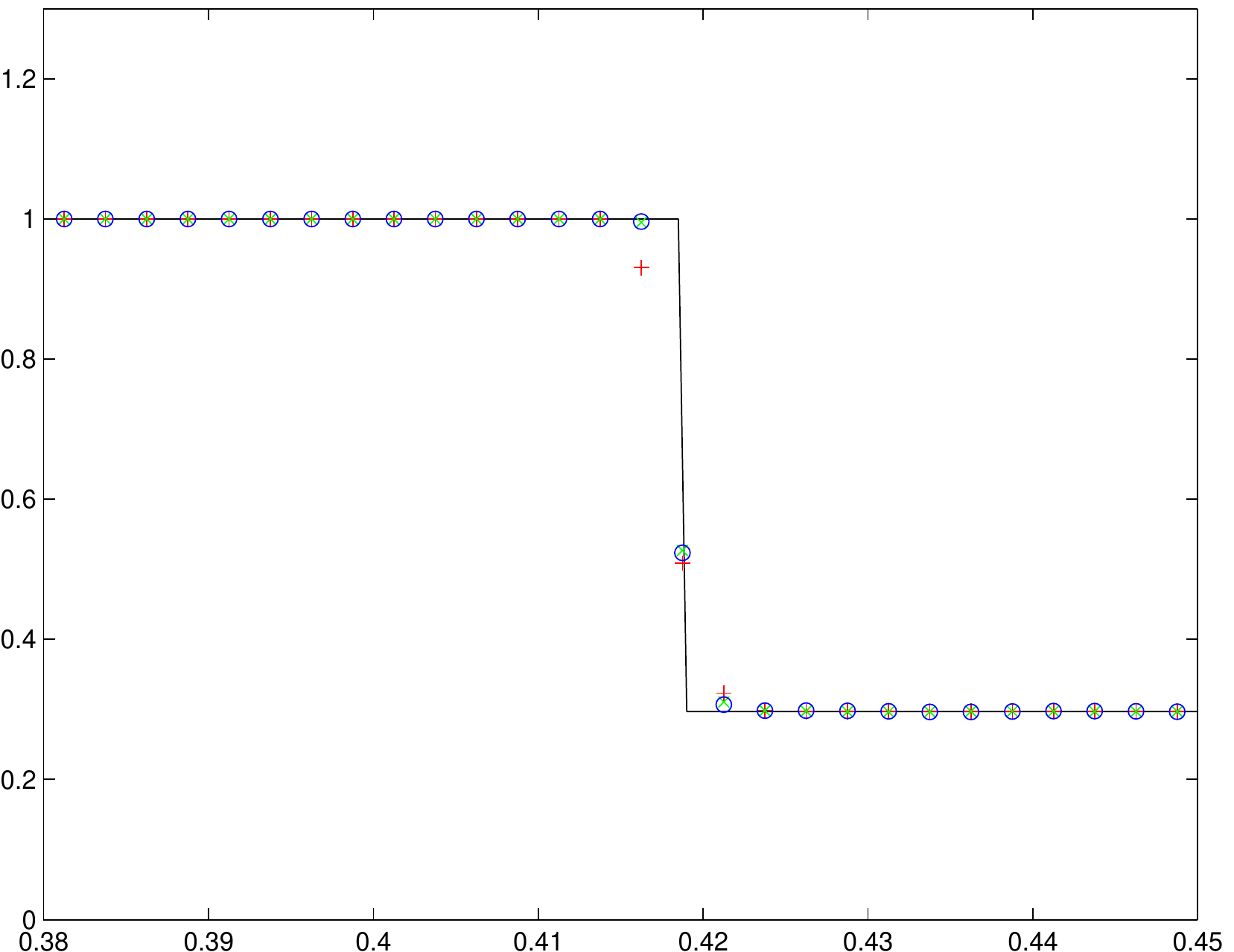}
 }
 \subfigure[close-up of $p$]{
   \includegraphics[width=0.3\textwidth]{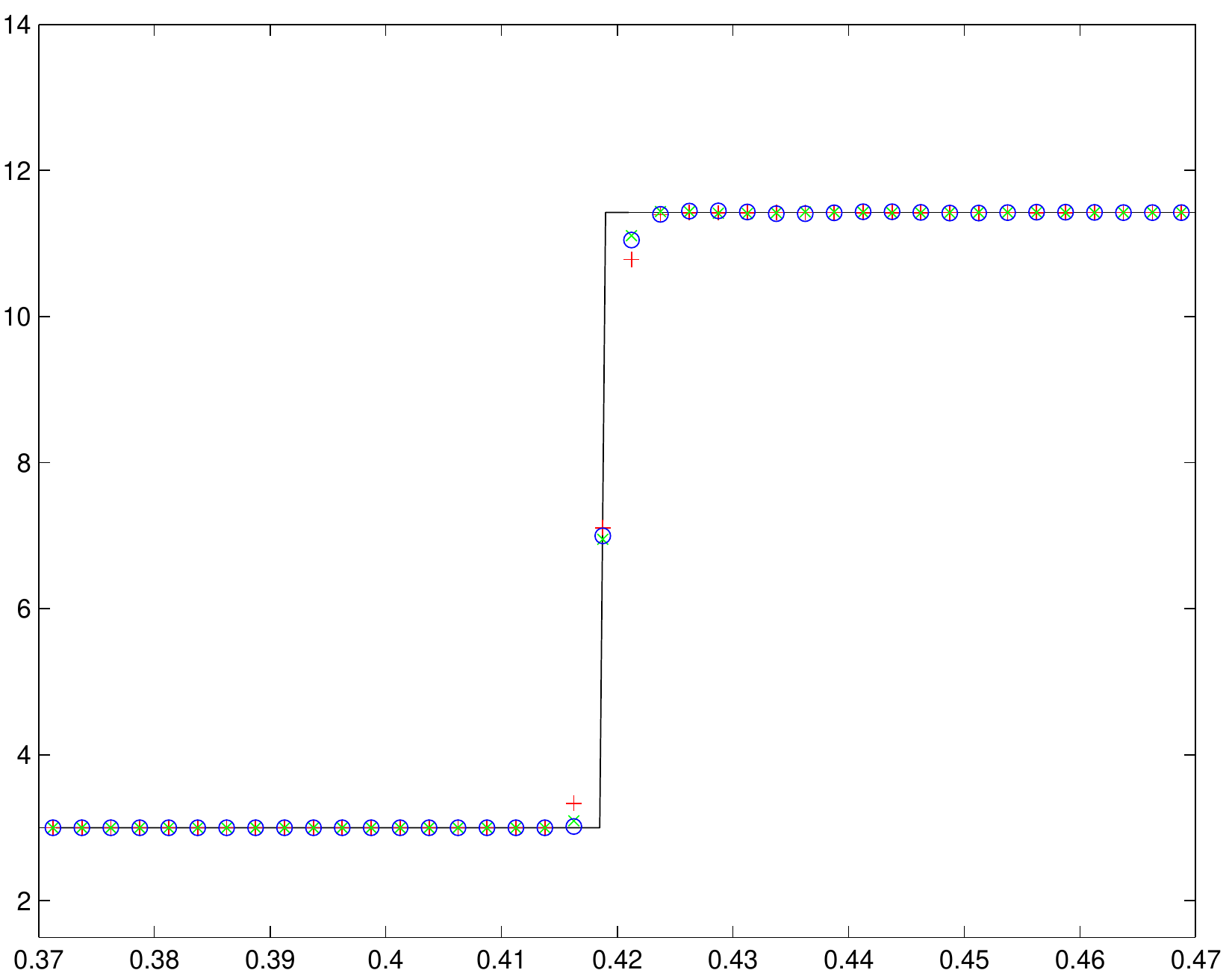}
 }
 \caption{Example \ref{ex:1DRP1}: The solutions {at} $t=0.5$  and their close-ups  obtained by using our BGK scheme (``{$\circ$}"), the BGK-type scheme (``{$\times$}"),
and the KFVS scheme (``{+}") with 400 uniform cells. }
 \label{fig:1DRP1}
\end{figure}

\begin{example}[Riemann problem II]\label{ex:1DRP2}\rm
 The initial conditions of the second Riemann problem are
 \begin{equation}
   \label{exeq:1DRP2}
  (n,u_1,p)(x,0)=
   \begin{cases}
     (5.0,0.0,10.0),& x<0.5,\\
     (1.0,0.0,0.5),& x>0.5.
   \end{cases}
 \end{equation}
\end{example}
\begin{figure}[htbp]
\centering
\subfigure[$ n $]{
 \includegraphics[width=0.3\textwidth]{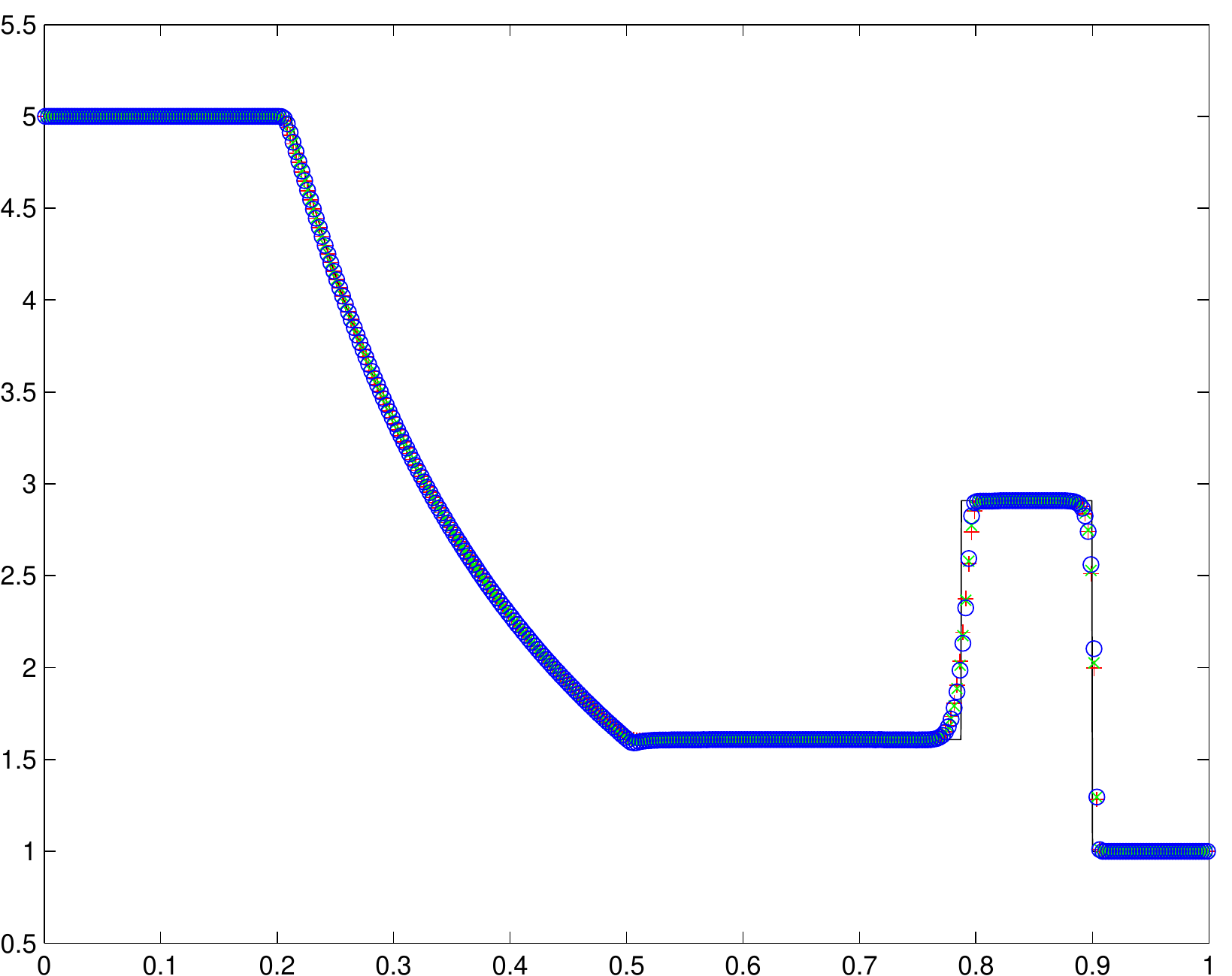}
}
\subfigure[$u_1$]{
 \includegraphics[width=0.3\textwidth]{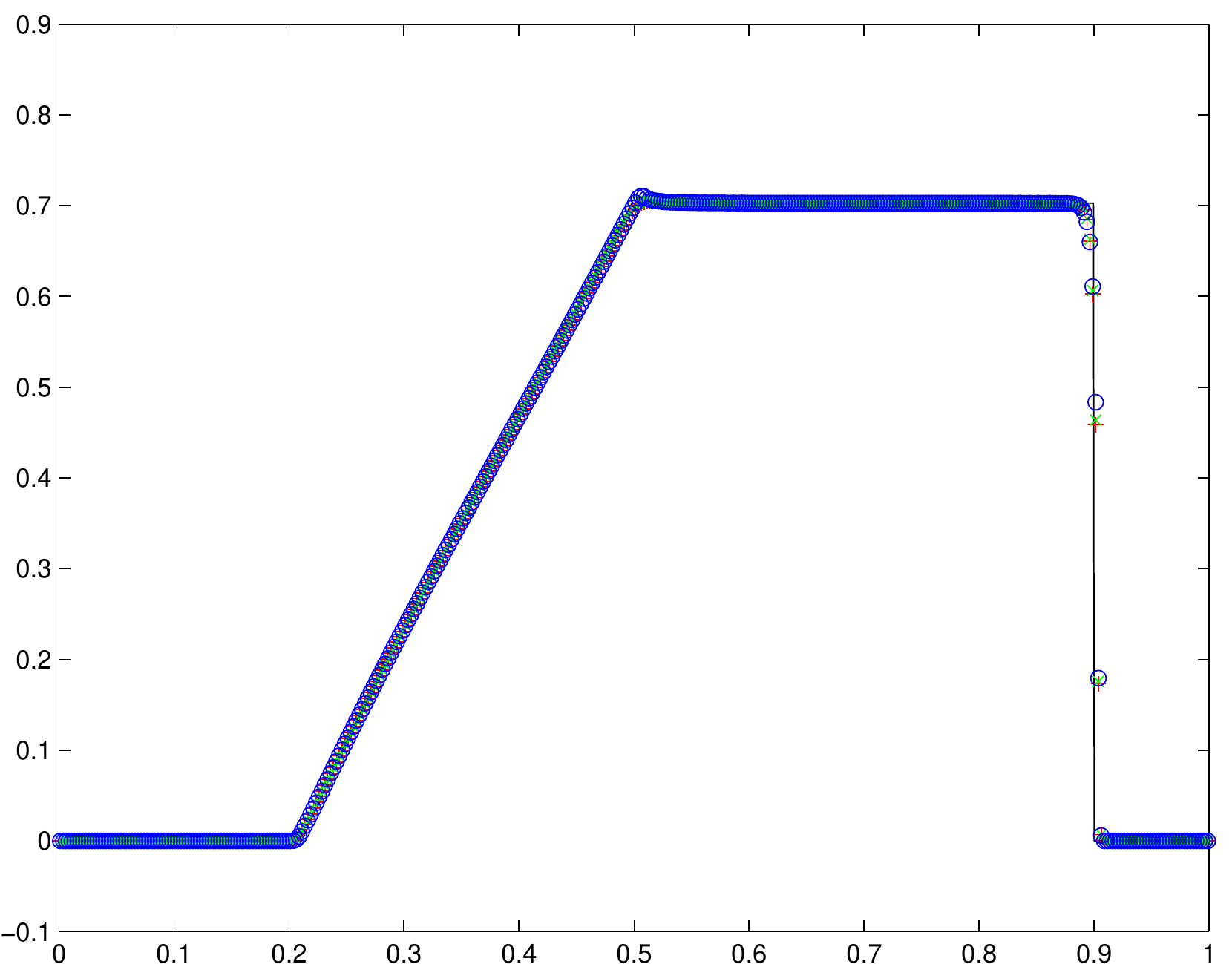}
}
\subfigure[$p$]{
 \includegraphics[width=0.3\textwidth]{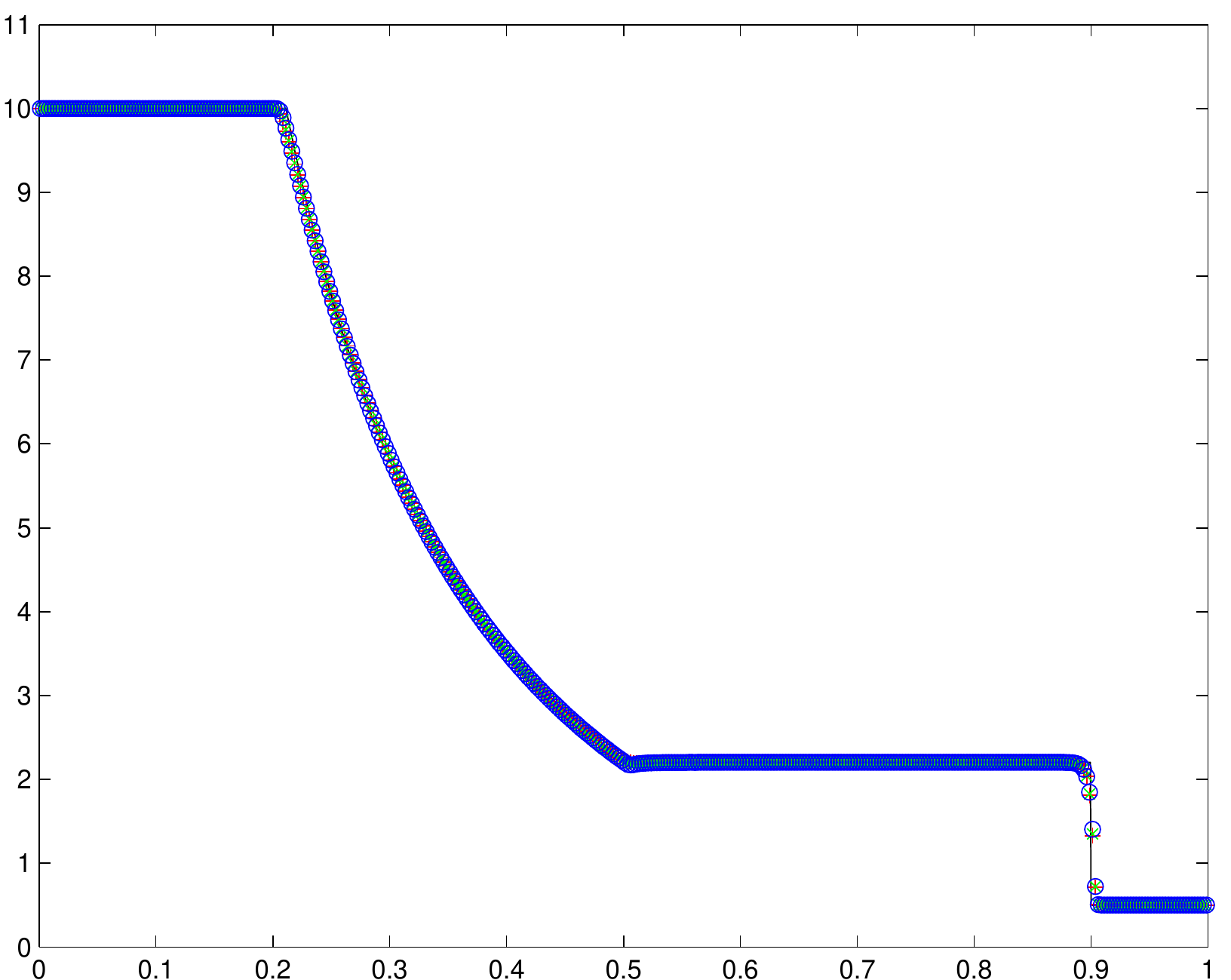}
}
\subfigure[close-up of $ n $]{
 \includegraphics[width=0.3\textwidth]{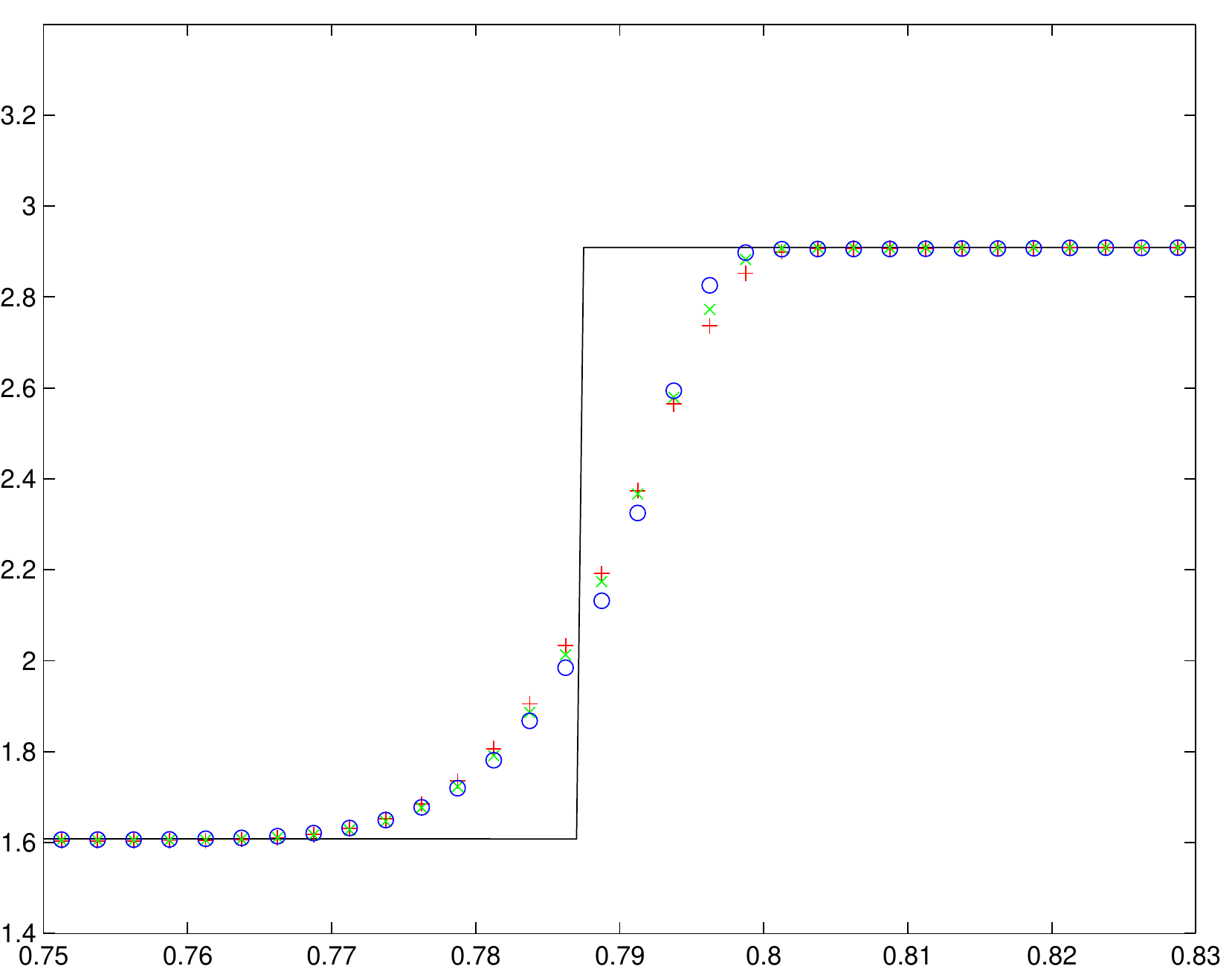}
}
\subfigure[close-up of $u_1$]{
 \includegraphics[width=0.3\textwidth]{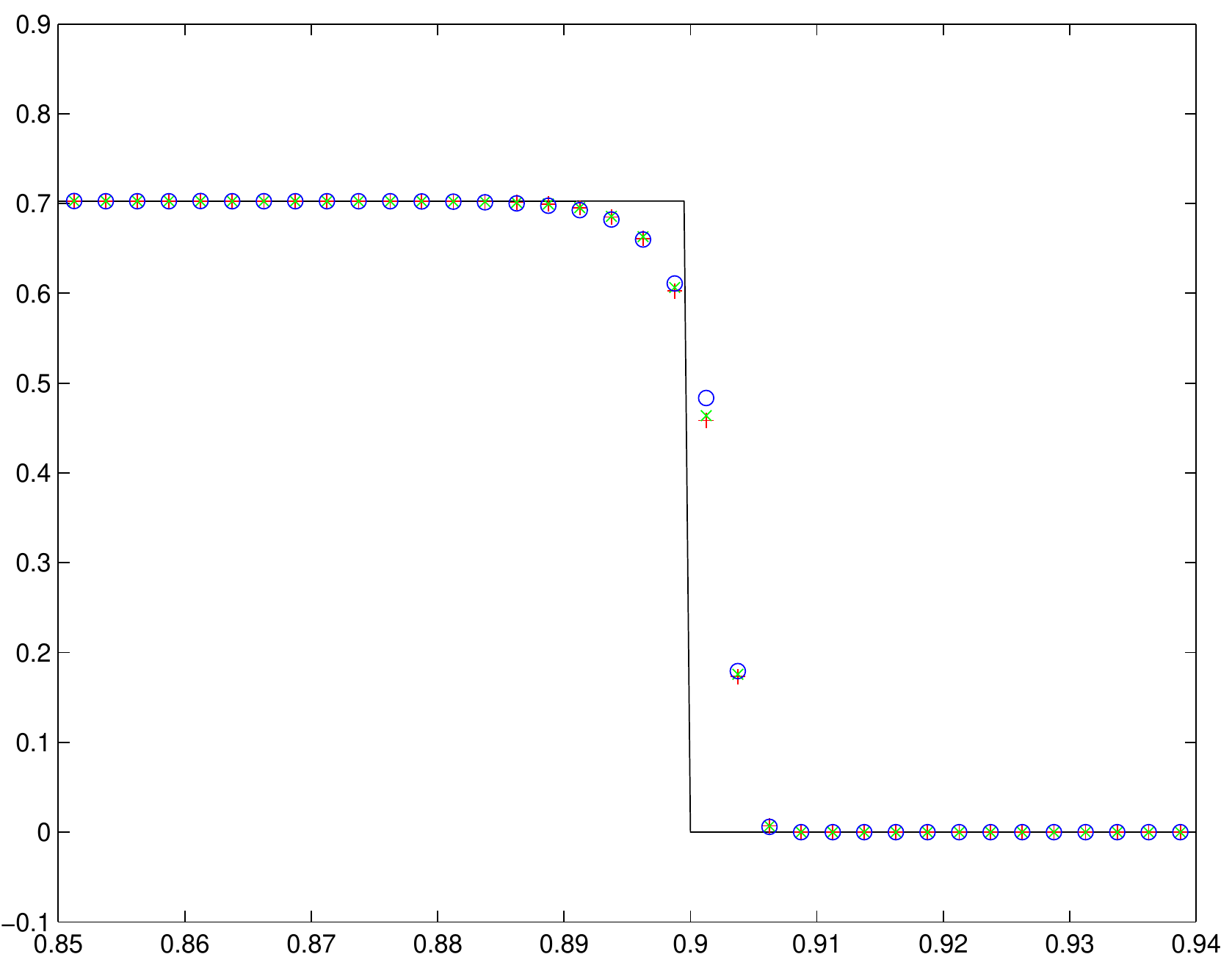}
}
\subfigure[close-up of $p$]{
 \includegraphics[width=0.3\textwidth]{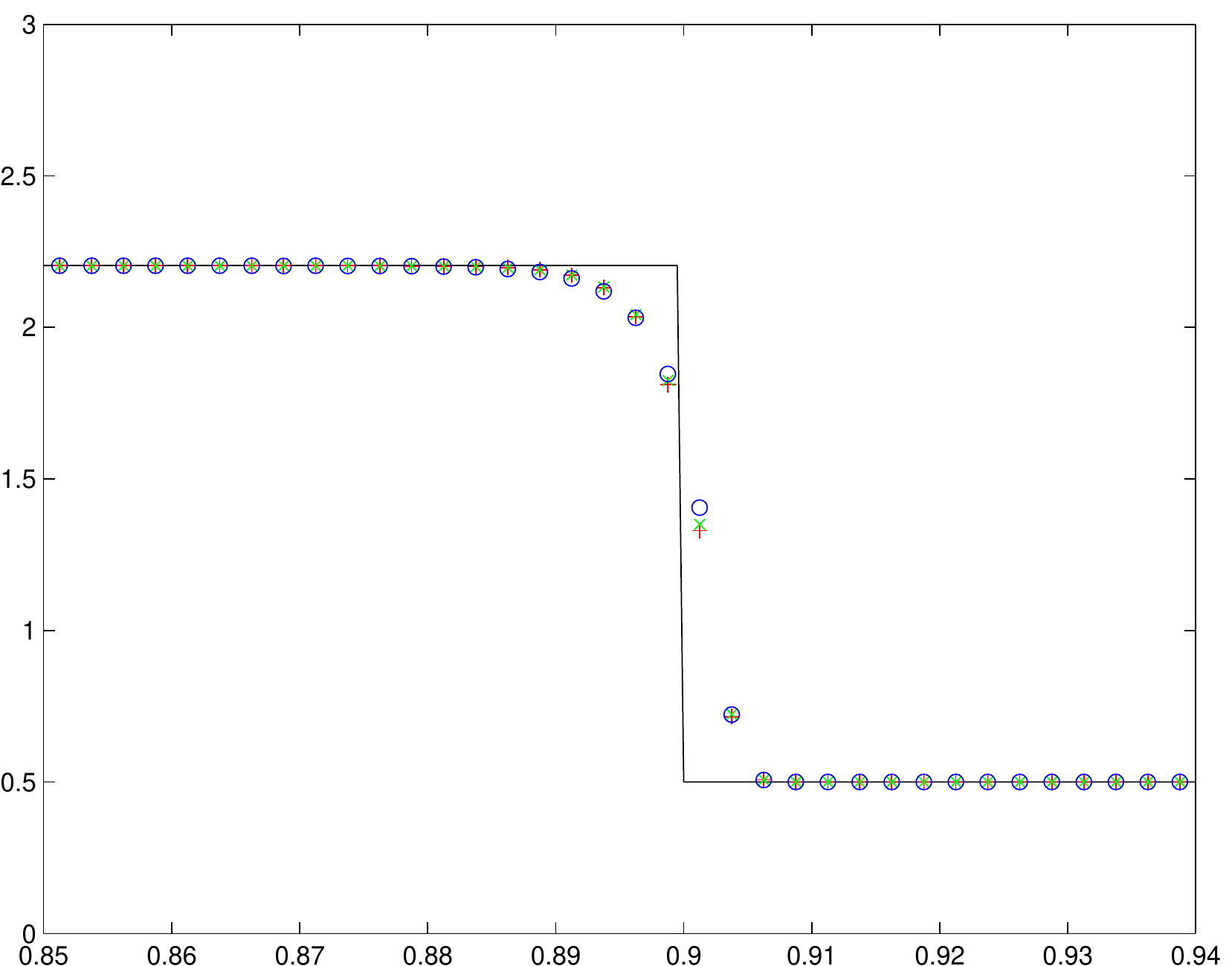}
}
\caption{Example \ref{ex:1DRP2}: The number density $ n $,  velocity $u_1$ and  pressure $p$ and their close-ups at $t=0.5$ obtained by using our BGK scheme (``{$\circ$}"), the BGK-type scheme (``{$\times$}"),
and the KFVS scheme (``{+}") with 400 uniform cells.}
\label{fig:1DRP2}
\end{figure}

Fig. \ref{fig:1DRP2}
shows the numerical solutions at $t=0.5$ obtained by using our BGK scheme (``{$\circ$}"), the BGK-type scheme (``{$\times$}"),
and the KFVS scheme (``{+}") with 400 uniform cells within the domain $[0,1]$, where the solid line denotes the exact solution. It is seen that the solutions
consist of a left-moving rarefaction wave, a contact discontinuity, and a right-moving shock wave,
the computed solutions well accord with the exact solutions, and the
rarefaction and shock waves are well resolved.
Moreover, our BGK scheme exhibits better resolution of the contact discontinuity
than the BGK-type and KFVS schemes.

\begin{example}[Riemann problem III]\label{ex:1DRP3}\rm
 The initial data are
 \begin{equation}
   \label{eqex:1DRP3}
(n,u_1,p)(x,0)=
   \begin{cases}
     (1.0,-0.5,2.0),& x<0.5,\\
     (1.0,0.5,2.0),& x>0.5.
   \end{cases}
 \end{equation}
 The initial discontinuity will evolve as a left-moving rarefaction wave, a   stationary contact discontinuity, and a right-moving rarefaction wave.

Fig. \ref{fig:1DRP3} plots the numerical results at $t=0.5$ obtained by using our BGK scheme (``{$\circ$}"), the BGK-type scheme (``{$\times$}"),
and the KFVS scheme (``{+}") with 400 uniform cells in the domain $[0,1]$,
where the solid line denotes the exact solution.
It is seen that there is a undershoot near the contact discontinuity in the number density
 which usually happens in the non-relativistic cases.
\end{example}

\begin{figure}[htbp]
 \centering
 \subfigure[$ n $]{
   \includegraphics[width=0.3\textwidth]{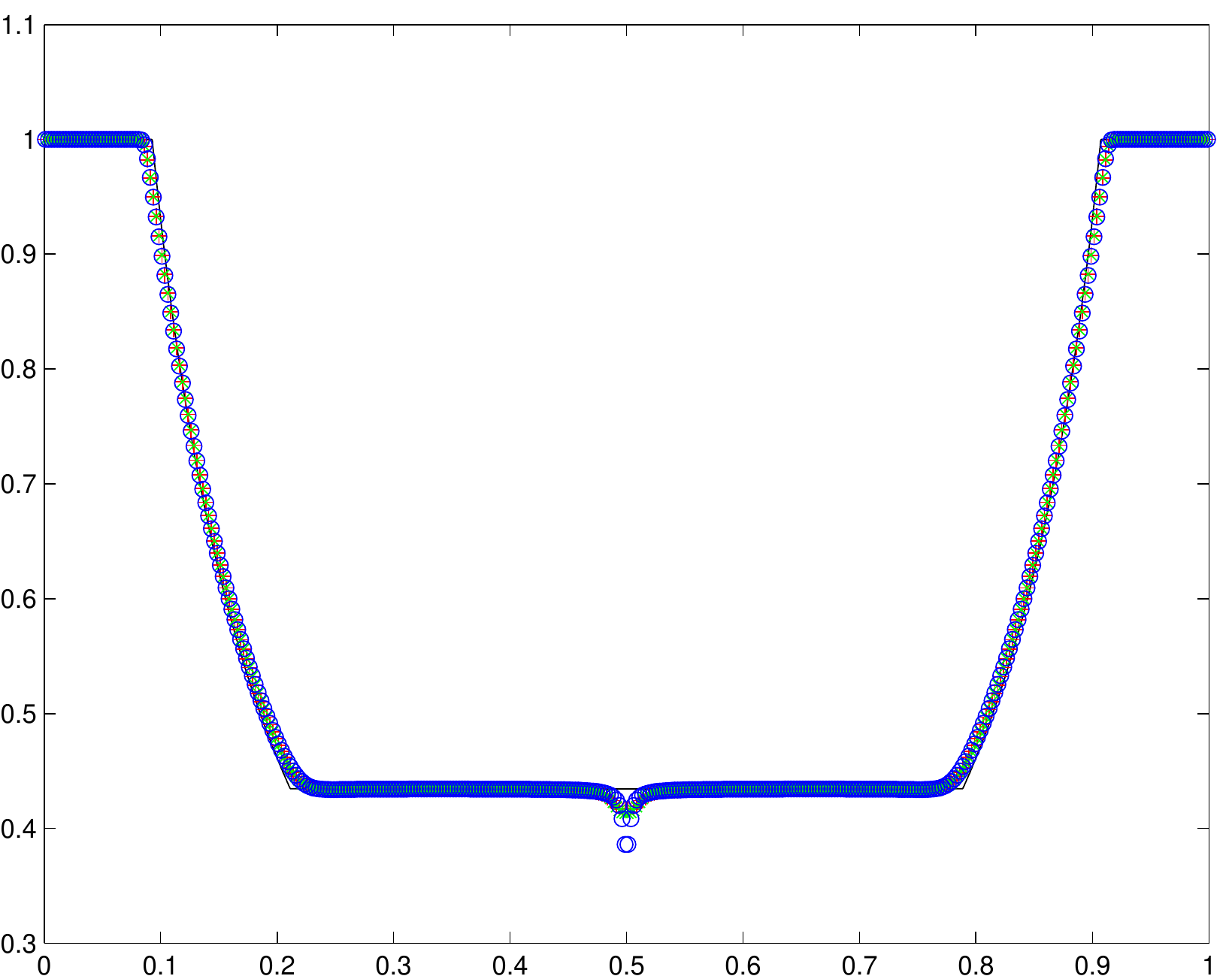}
 }
 \subfigure[$u_1$]{
   \includegraphics[width=0.3\textwidth]{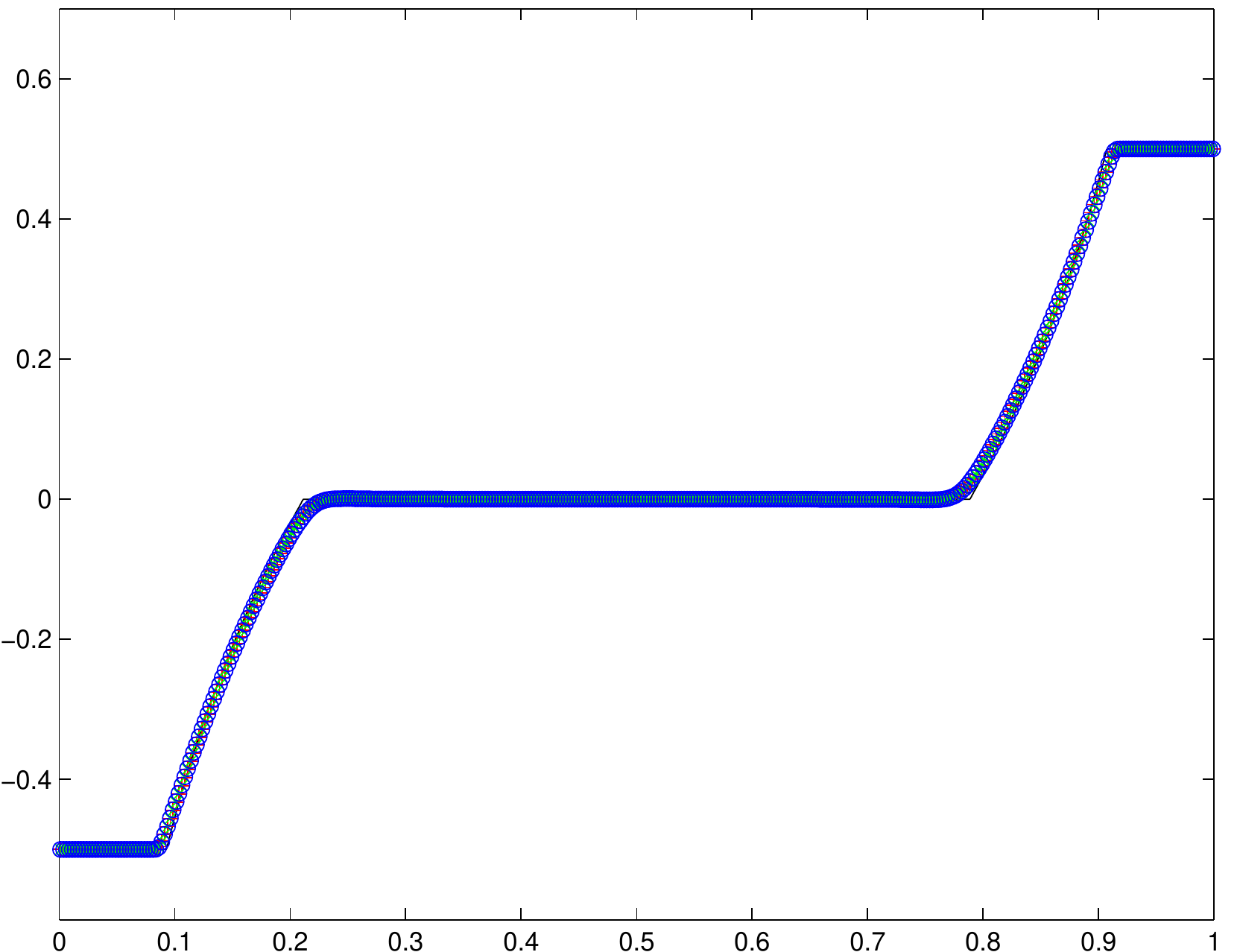}
 }
 \subfigure[$p$]{
   \includegraphics[width=0.3\textwidth]{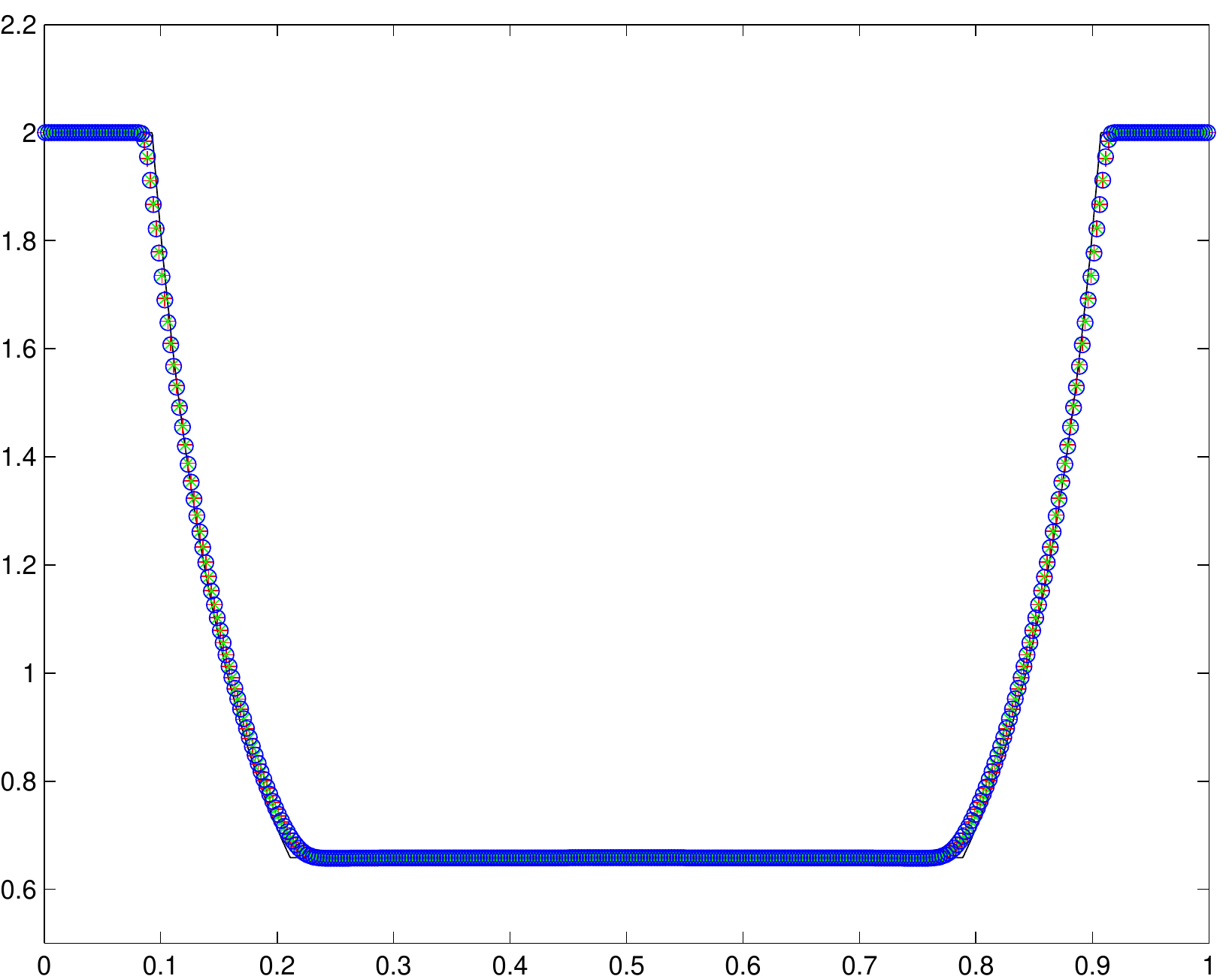}
 }
 \subfigure[close-up of $ n $]{
   \includegraphics[width=0.3\textwidth]{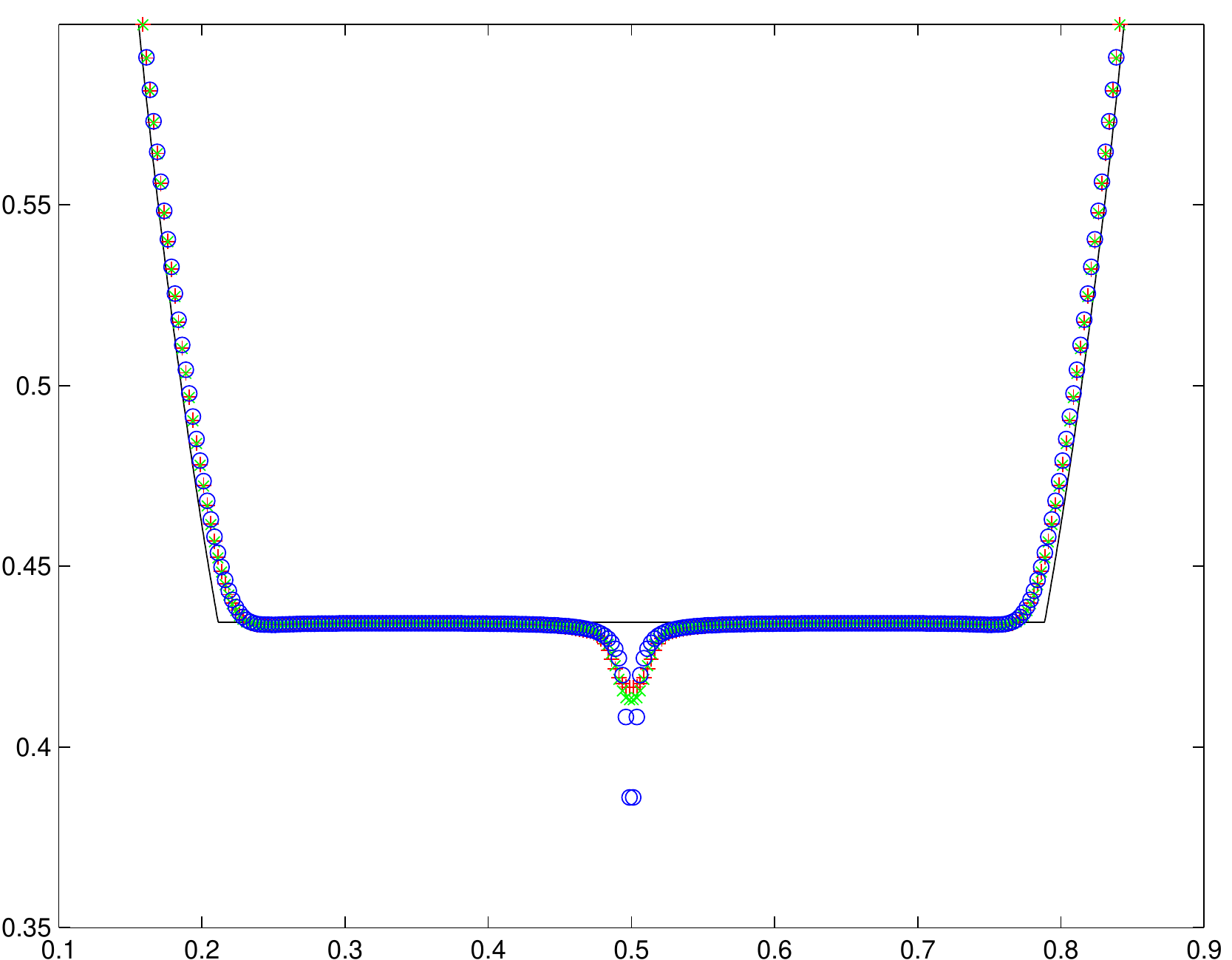}
 }
 \subfigure[close-up of $u_1$]{
   \centering
   \includegraphics[width=0.3\textwidth]{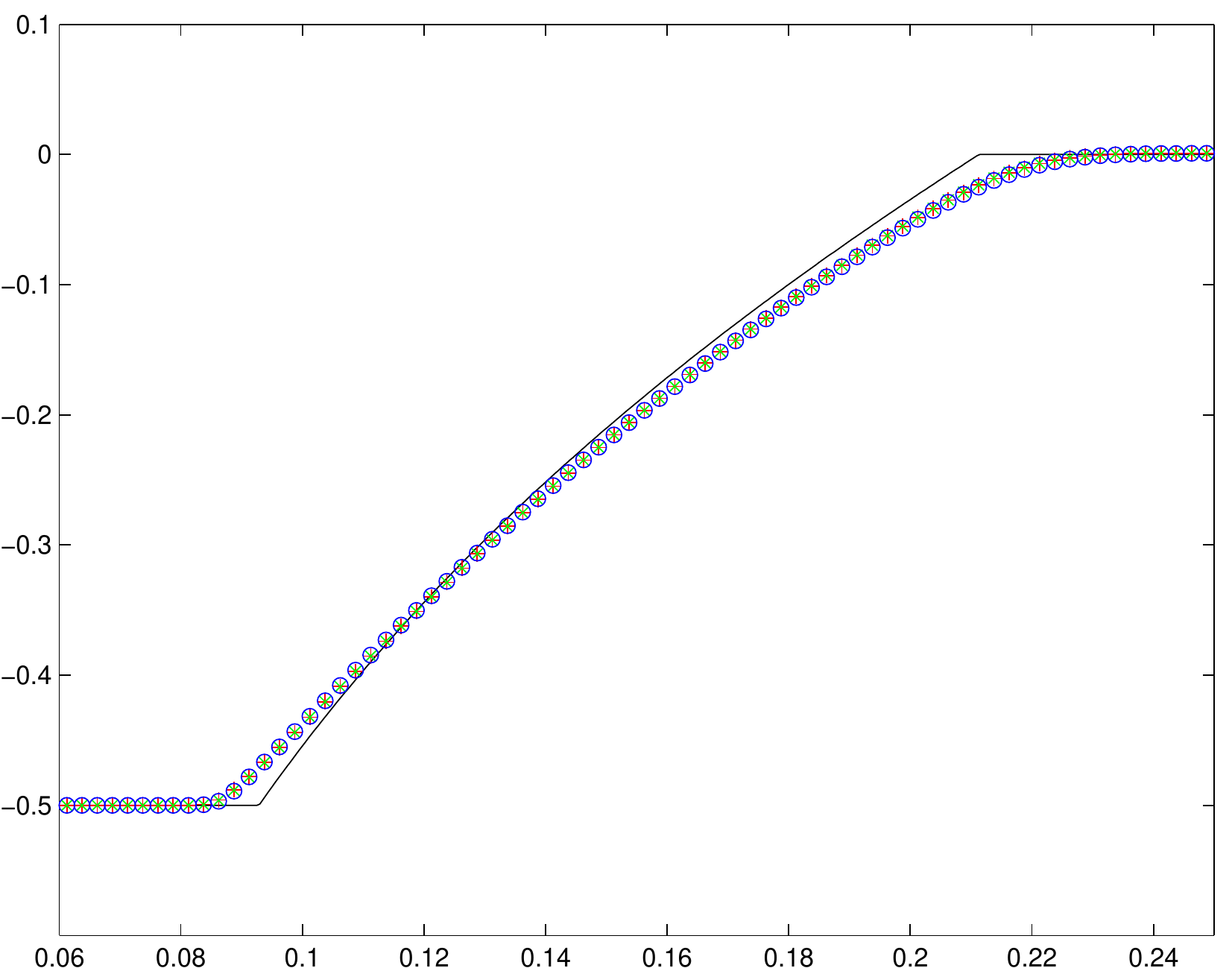}
 }
 \subfigure[close-up of $p$]{
   \includegraphics[width=0.3\textwidth]{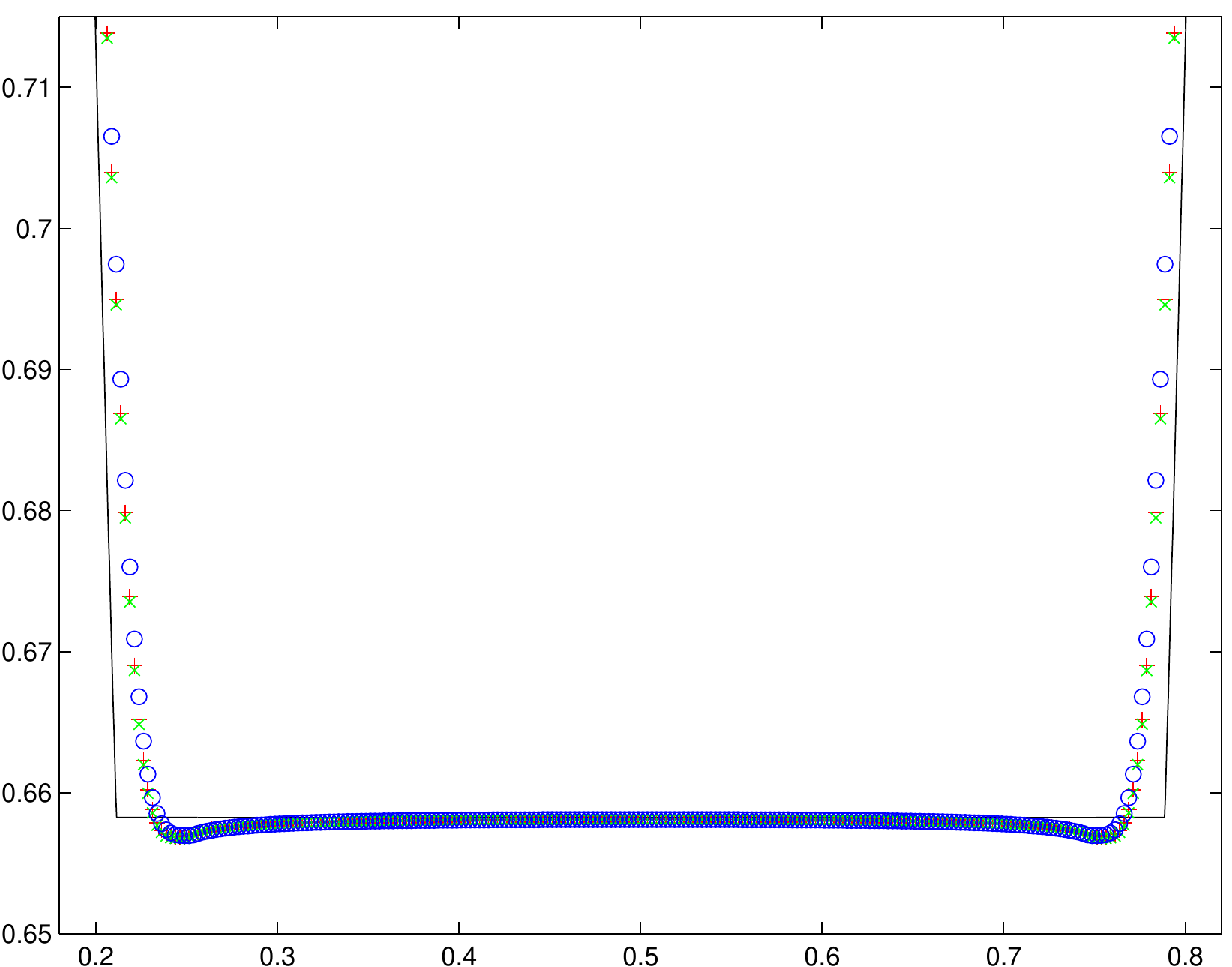}
 }
 \caption{Example \ref{ex:1DRP3}: The solutions and their close-ups at $t=0.5$ obtained by using our BGK scheme (``{$\circ$}"), the BGK-type scheme (``{$\times$}"),
and the KFVS scheme (``{+}") with 400 uniform cells.}
 \label{fig:1DRP3}
\end{figure}

\begin{example}[Perturbed shock tube problem] \label{ex:sinewave}\rm
 The initial data are
 \begin{equation}
 \label{exeq:sinewave}
(n,u_1,p)(x,0)=
 \begin{cases}
   (1.0,0.0,1.0),& x<0.5,\\
   ( n _r,0.0,0.1),& x>0.5,
 \end{cases}
\end{equation}
where $ n _r=0.125 - 0.0875\sin(50(x-0.5))$.
It is a perturbed shock tube problem, which has widely been used to test the ability of the shock-capturing schemes in resolving small-scale flow features in the non-relativistic flow.
\end{example}

\begin{figure}[h]
 \centering
 \subfigure[$ n $]{
   \includegraphics[width=0.4\textwidth]{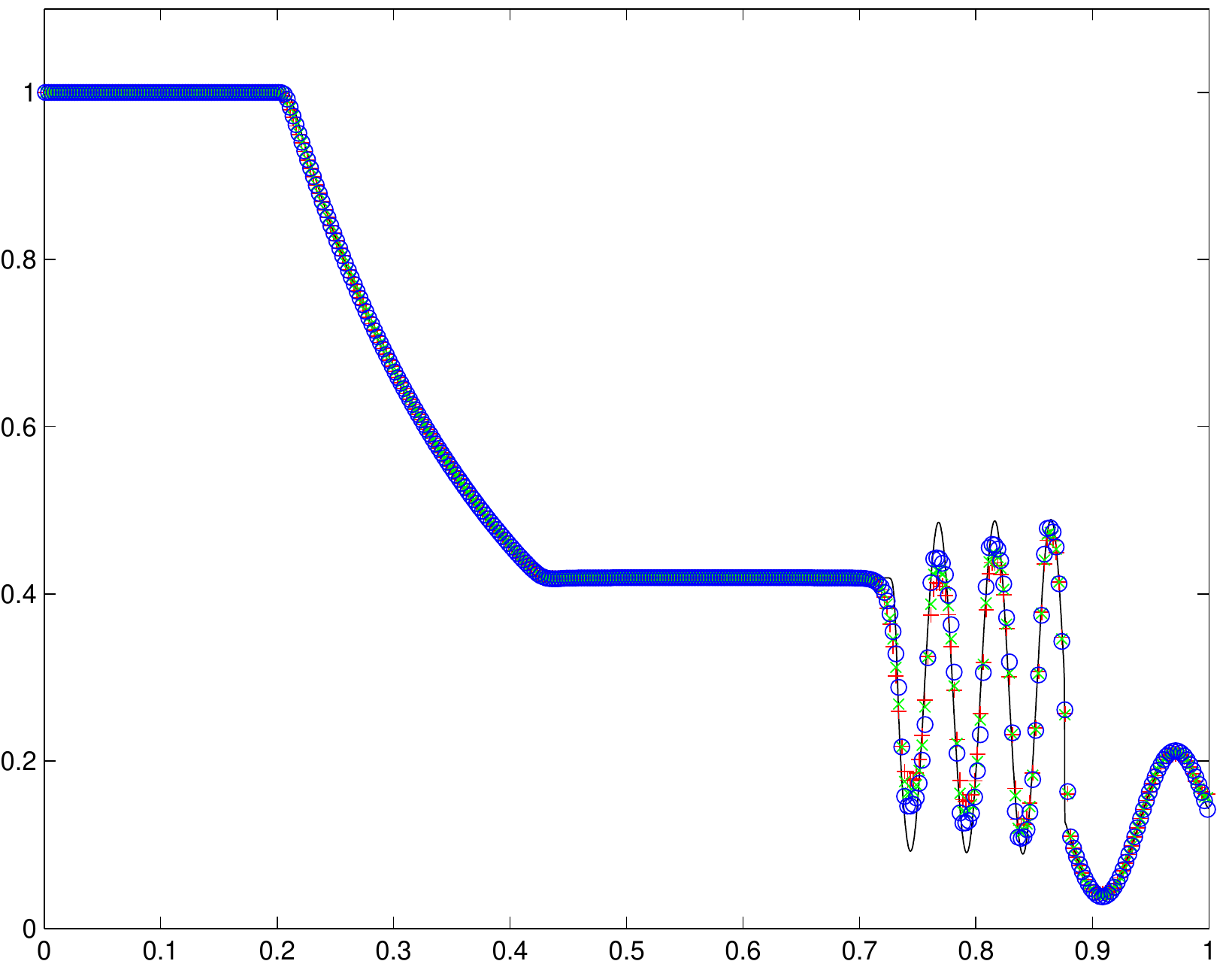}
 }
 \subfigure[close-up of $ n $]{
   \includegraphics[width=0.4\textwidth]{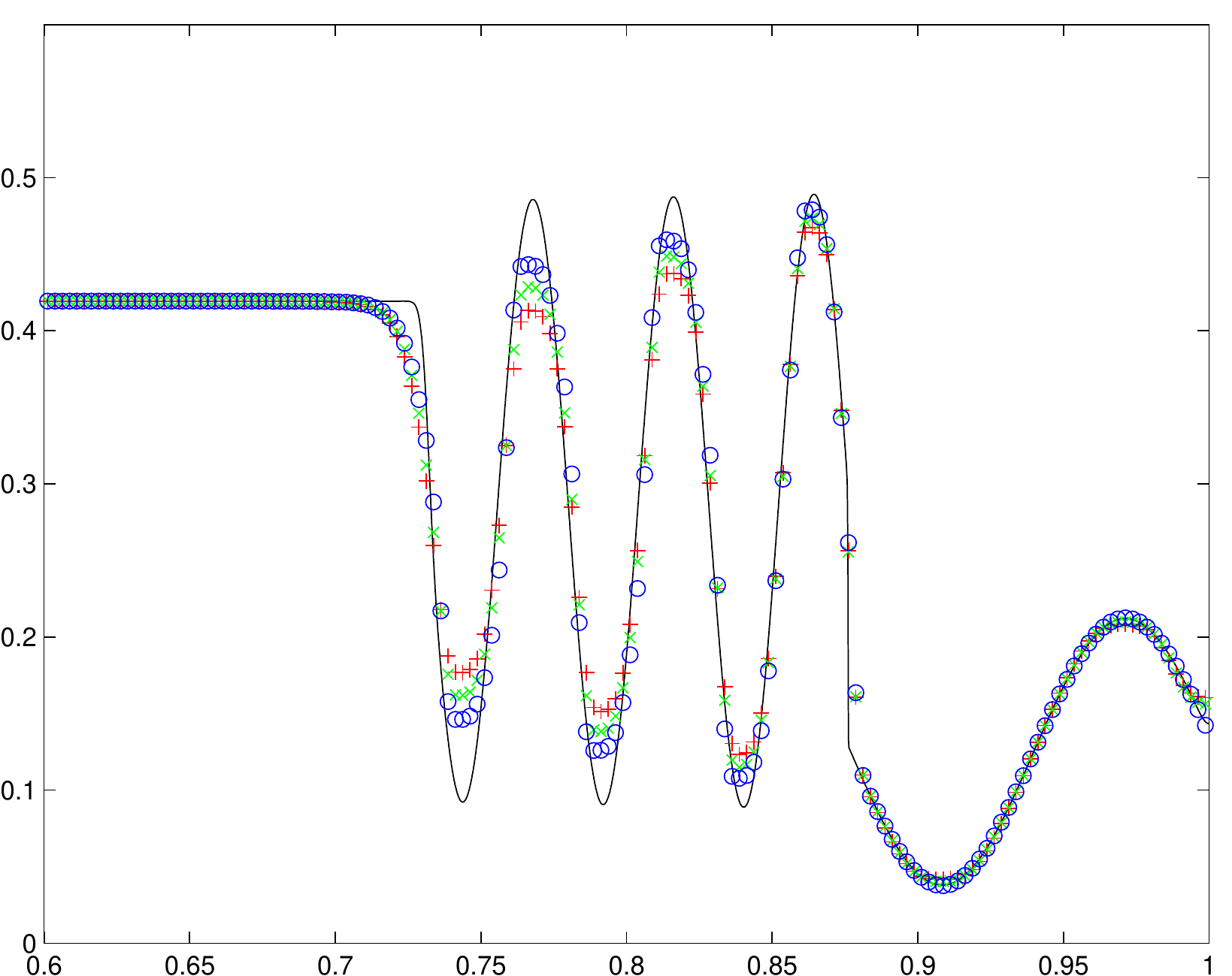}
 }
 \subfigure[$u_1$]{
   \includegraphics[width=0.4\textwidth]{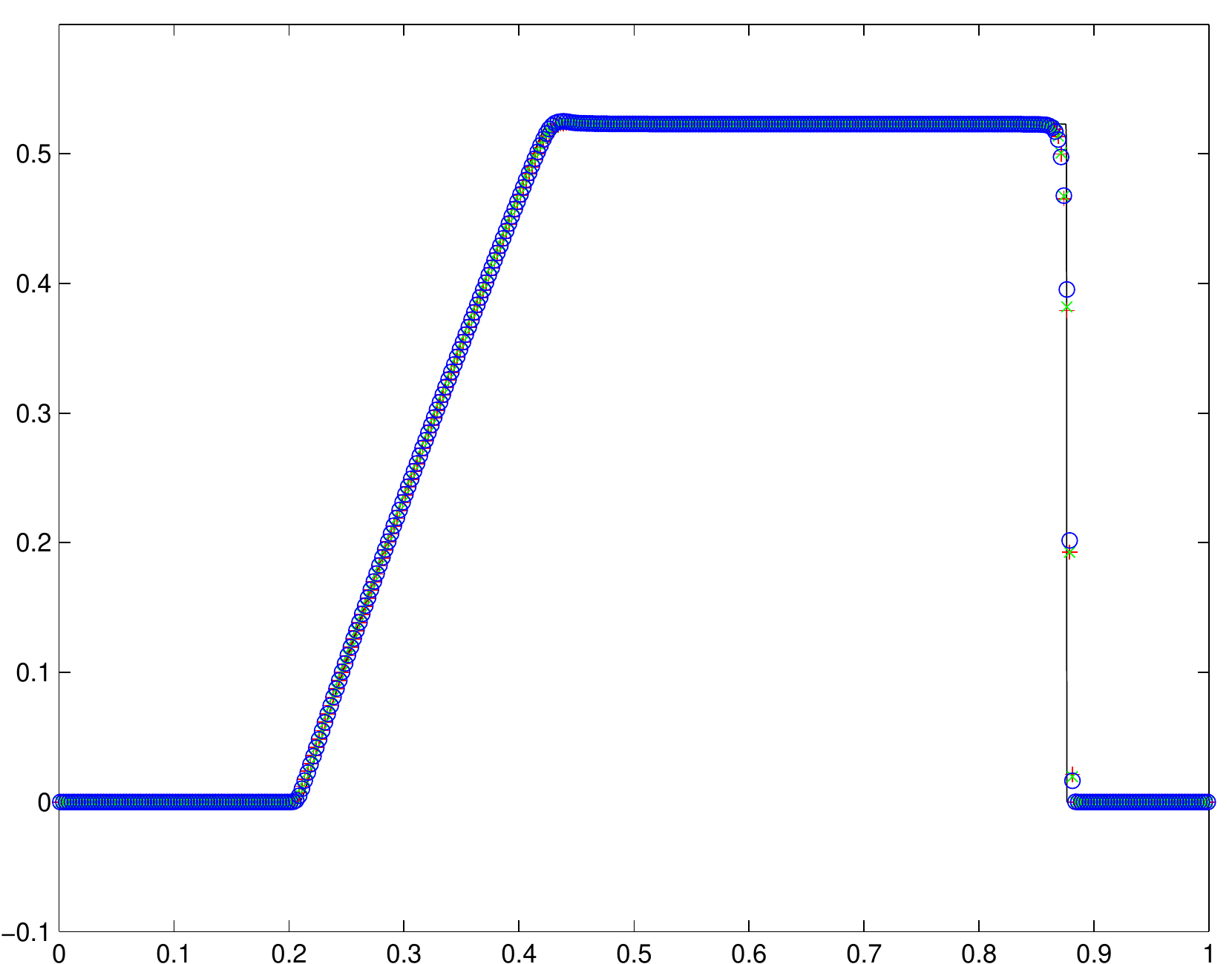}
 }
 \subfigure[$p$]{
   \includegraphics[width=0.4\textwidth]{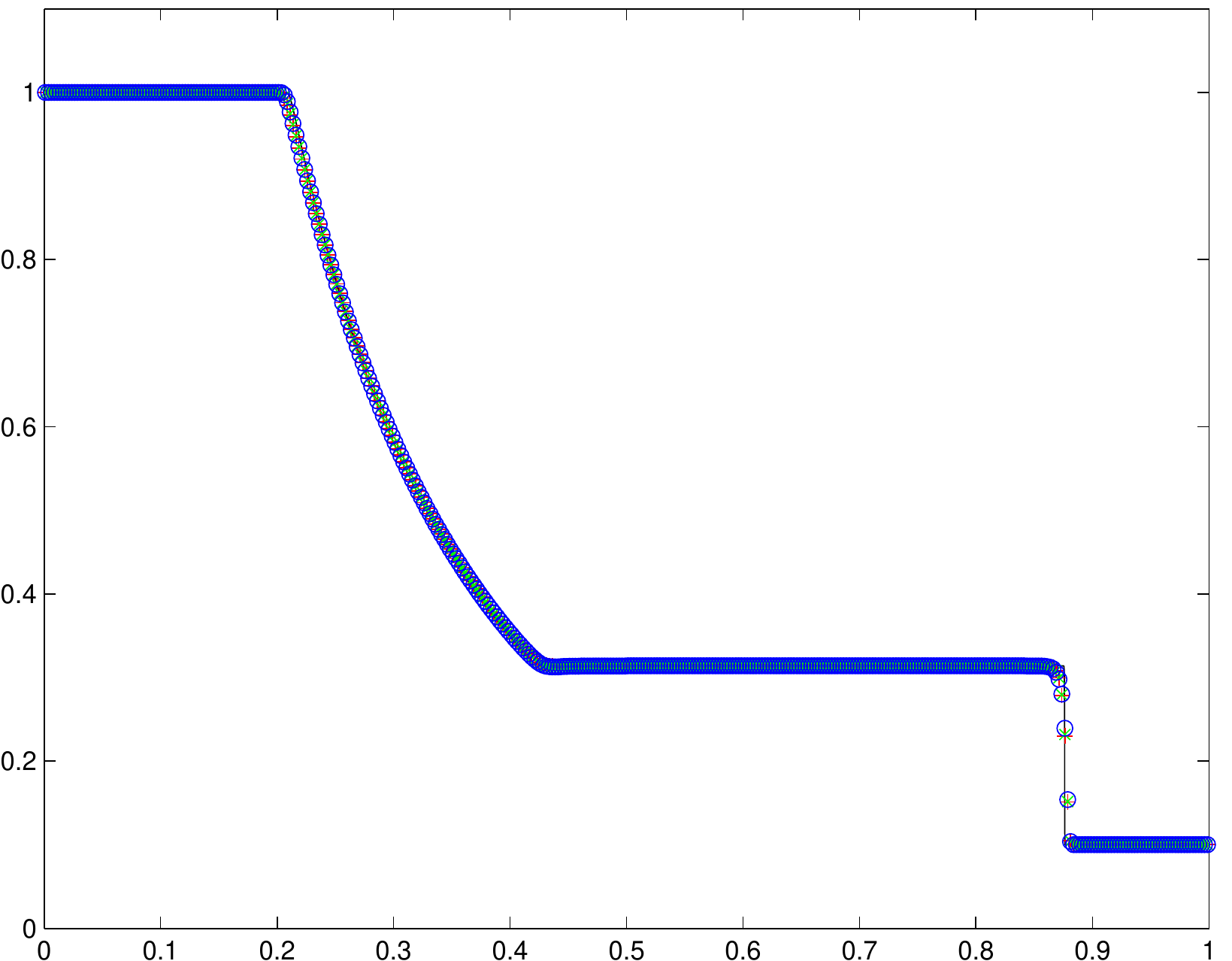}
 }
 \caption{Example \ref{ex:sinewave}: The numerical solutions at $t=0.5$ obtained by our BGK scheme (``{$\circ$}"), the BGK-type scheme (``{$\times$}"),
and the KFVS scheme (``{+}") with 400 uniform cells.}
 \label{fig:sinewave}
\end{figure}
Fig. \ref{fig:sinewave} plots the numerical results at $t=0.5$ in the computational domain $\Omega=[0,1]$ obtained by using our BGK scheme (``{$\circ$}"), the BGK-type scheme (``{$\times$}"),
and the KFVS scheme (``{+}") with 400 uniform cells.
Those are compared with the reference solution (the solid line) obtained by using the KFVS scheme with a finer
mesh of 10000 uniform cells. It is seen that
 the shock wave is moving into a sinusoidal density field,
some complex but smooth structures are generated at the left hand side of the shock wave  when the shock wave interacts with the sine wave,
and our BGK scheme is obviously better than the BGK-type and KFVS schemes
in resolving those complex structures.
Since the continuity equation in the Euler equations decouples from   other equations for the pressure and velocity, one does not see the effect of perturbation in the pressure \cite{kunik2004bgktype}.

\begin{example}[Collision of blast waves]\label{ex:blast}\rm
It is about the collision of blast waves and  simulated to evaluate the performance of the genuine BGK scheme and the BGK-type and KFVS schemes for the flow with strong discontinuities.
 The initial data are taken as follows
 \begin{equation}
   \label{eqex:blast}
(n,u_1,p)(x,0)=
   \begin{cases}
     (1.0,0.0,100.0),& 0<x<0.1,\\
     (1.0,0.0,0.06),& 0.1<x<0.9,\\
     (1.0,0.0,10.0),& 0.9<x<1.0.
   \end{cases}
 \end{equation}
\end{example}
Reflecting boundary conditions are specified at the two ends of the unit interval $[0,1]$.

Fig. \ref{fig:blast} plots the numerical results at $t=0.75$ obtained by using our BGK scheme (``{$\circ$}"), the BGK-type scheme (``{$\times$}"),
and the KFVS scheme (``{+}") with 700 uniform cells within the domain $[0,1]$.
It is found that the solutions at $t=0.75$ are bounded by two shock waves and those schemes can well resolve those shock waves. However, the genuine BGK scheme exhibits better resolution of  the contact discontinuity
than the BGK-type and KFVS schemes.
\begin{figure}[htbp]
 \centering
 \subfigure[$ n $]{
 \includegraphics[width=0.3\textwidth]{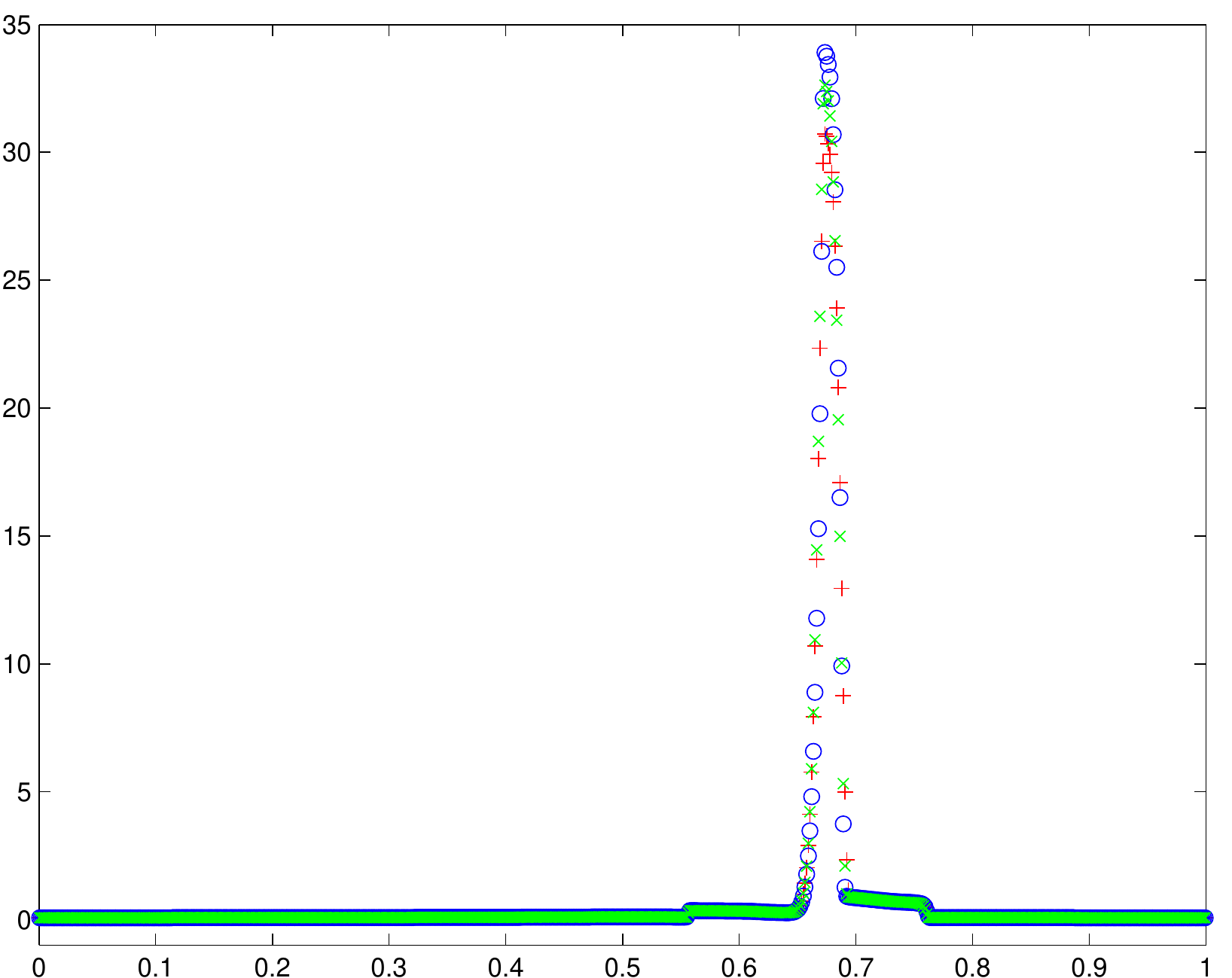}
 }
 \subfigure[$u_1$]{
 \includegraphics[width=0.3\textwidth]{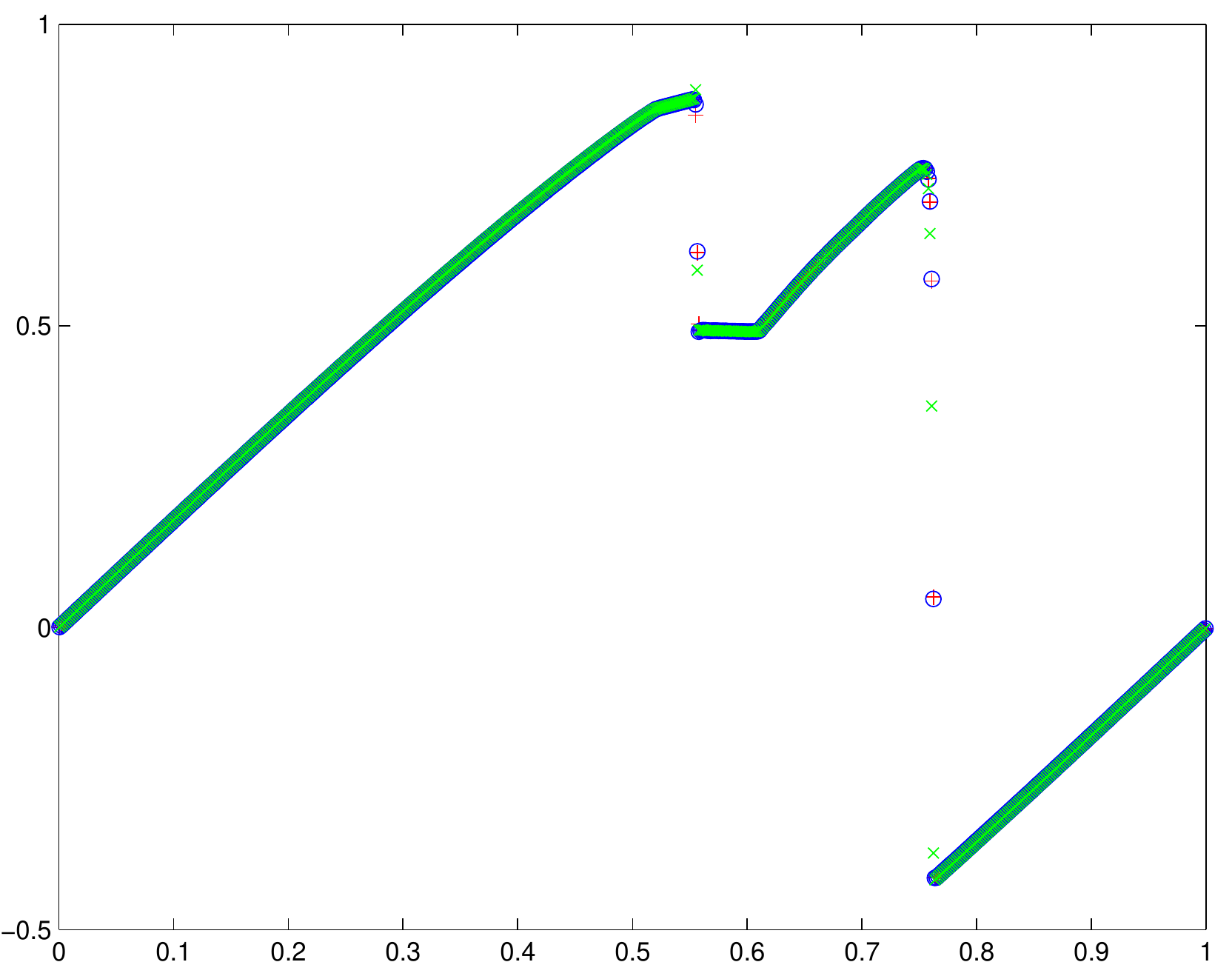}
 }
 \subfigure[$p$]{
 \includegraphics[width=0.3\textwidth]{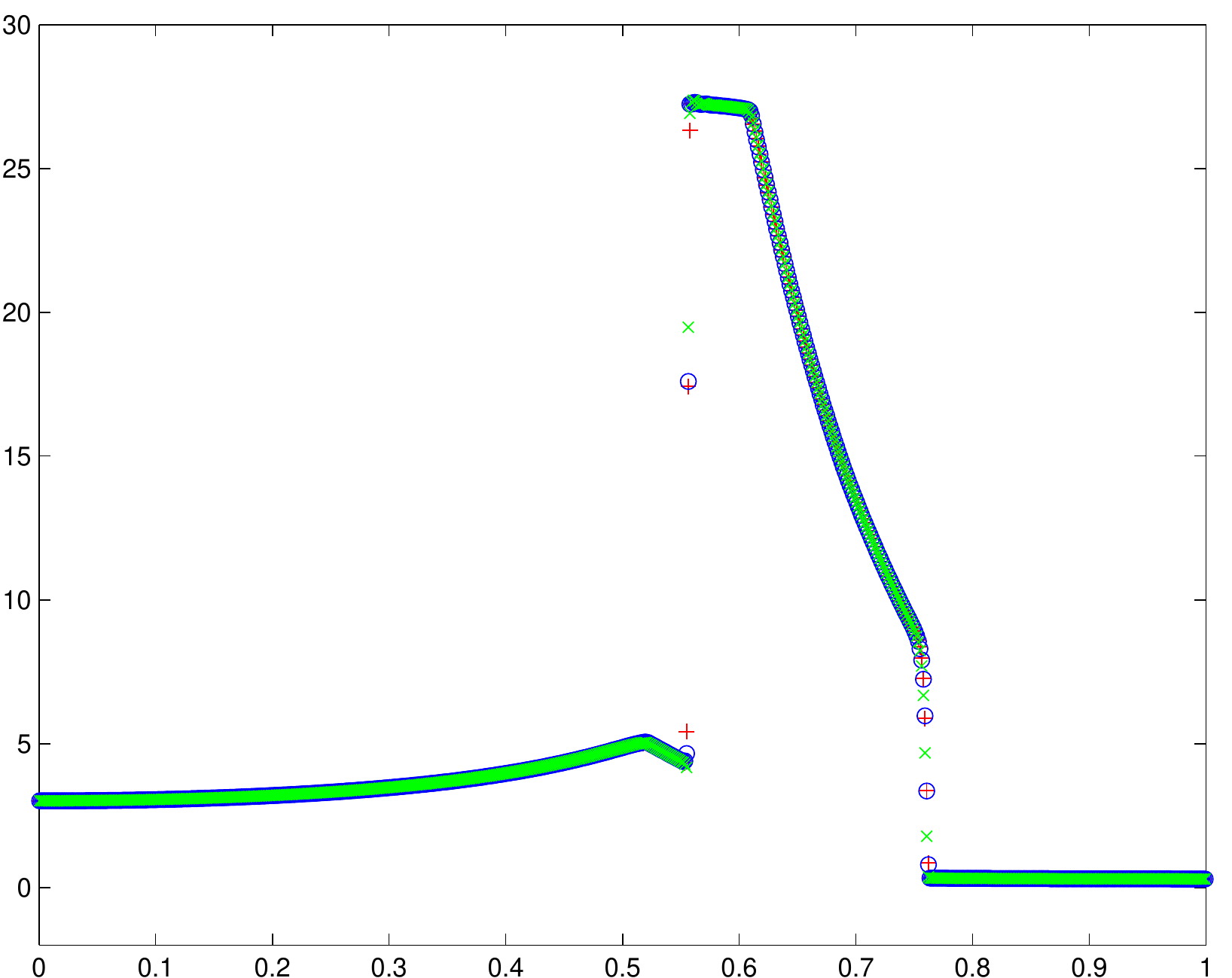}
 }
 \caption{Example \ref{ex:blast}: The number density $ n $,  velocity $u_1$ and  pressure $p$
 at $t=0.75$ obtained by using our BGK scheme (``{$\circ$}"), the BGK-type scheme (``{$\times$}"),
and the KFVS scheme (``{+}") with 700 uniform cells.}
 \label{fig:blast}
\end{figure}

\subsection{2D Euler case}

\begin{example}[Accuracy test]\label{ex:accurary2D}\rm To check the accuracy of our BGK scheme,
 we   solve a smooth problem which describes a sine wave propagating periodically in the domain $\Omega=[0,1]\times[0,1]$ at an angle $\alpha={45}^{\circ}$ with the $x$-axis.
 The initial conditions are taken as follows
\[ n (x,y,0)= 1+0.5\sin(2\pi (x+y)), u_1(x,y,0)=u_2(x,y,0)=0.2, p(x,y,0)=1,\]
so that the exact solution can be given by
\[ n (x,y,t)= 1+0.5\sin(2\pi (x-0.2t + y-0.2t)), u_1(x,y,t)=u_2(x,y,t)=0.2, p(x,y,t)=1.\]
The computational domain $\Omega$ is divided into $N\times N$ uniform cells and the periodic boundary conditions are specified.
\end{example}
\begin{table}[H]
 \setlength{\abovecaptionskip}{0.cm}
 \setlength{\belowcaptionskip}{-0.cm}
 \caption{Example \ref{ex:accurary2D}: Numerical errors at $t = 0.1$  in $l^1, l^2$-norms and convergence rates with or without limiter.}\label{tab:accuracy2D}
 \begin{center}
   \begin{tabular}{*{9}{c}}
     \toprule
     \multirow{2}*{N} &\multicolumn{4}{c}{With limiter} &\multicolumn{4}{c}{Without limiter}\\
     \cmidrule(lr){2-5}\cmidrule(lr){6-9}
     & $l^1$ error  & $l^1$ order  &  $l^2$ error  &  $l^2$ order & $l^1$ error  & $l^1$ order  &  $l^2$ error  &  $l^2$ order\\
     \midrule
     25   & 1.7316e-03 &  -       &2.5820e-03 & -       &6.1369e-04   &-       &6.8214e-04  &-  \\
     50   & 5.3784e-04 &  1.6869   &9.1457e-04 &1.4974  &1.5610e-04   &1.9751  &1.7340e-04  &1.9759\\
     100  & 1.4248e-04 &  1.9164   &2.8992e-04 &1.6574  &3.9584e-05   &1.9795  &4.3962e-05  &1.9798\\
     200  & 3.8119e-05 &  1.9022   &9.5759e-05 &1.5982  &9.8942e-06   &2.0003  &1.0989e-05  &2.0002\\
     400  & 1.0923e-05 &  1.8031   &3.1779e-05 &1.5914  &2.4837e-06   &1.9941  &2.7586e-06  &1.9941\\
     \bottomrule
   \end{tabular}
 \end{center}
\end{table}

Table \ref{tab:accuracy2D} gives the $l^1$- and $l^2$- errors at $t=0.1$ and corresponding convergence rates for the BGK scheme with $\alpha = 2$ and $C_1=C_2=1$.
The results show that the 2D BGK scheme is second-order accurate  and  the van Leer limiter affects the accuracy.

To verify the capability of our genuine BGK scheme in capturing the complex 2D relativistic wave configurations, we will solve three inviscid problems: explosion in a box, cylindrical explosion, and ultra-relativistic jet problems.

\begin{example}[Implosion in a box]\rm\label{ex:implosion}
 This example considers a 2D Riemann problem inside a squared domain $[0,2]\times[0,2]$ with reflecting walls. A square with side
 length of 0.5 embedded in the center of the outer box of side length of 2. The number density is 4 and the pressure is 10 inside the small box while both the density
 and the pressure are 1 outside of the small box. The fluid velocities are zero everywhere.
\end{example}

Figs.~\ref{fig:implosion3} and  \ref{fig:implosion12} give the contours of the density, pressure and velocities
at time $t=3$ and $12$ obtained by our BGK scheme on the uniform mesh of $400\times400$  cells, respectively.
The results show that the genuine BGK scheme captures the complex wave interaction well. Fig. \ref{fig:implosioncompare} gives a comparison of the numerical densities along the line $y=1$ calculated  by using the genuine BGK  scheme (``{$\circ$}"), { BGK-type scheme (``{$\times$}"),  and KFVS scheme (``{+}") respectively. Obviously, the genuine BGK  scheme  resolves the complex wave structure  better than the BGK-type and KFVS schemes.

\begin{figure}[htbp]
 \centering
 \subfigure[$ n $]{
   \includegraphics[width=4.5cm,height=4.5cm]{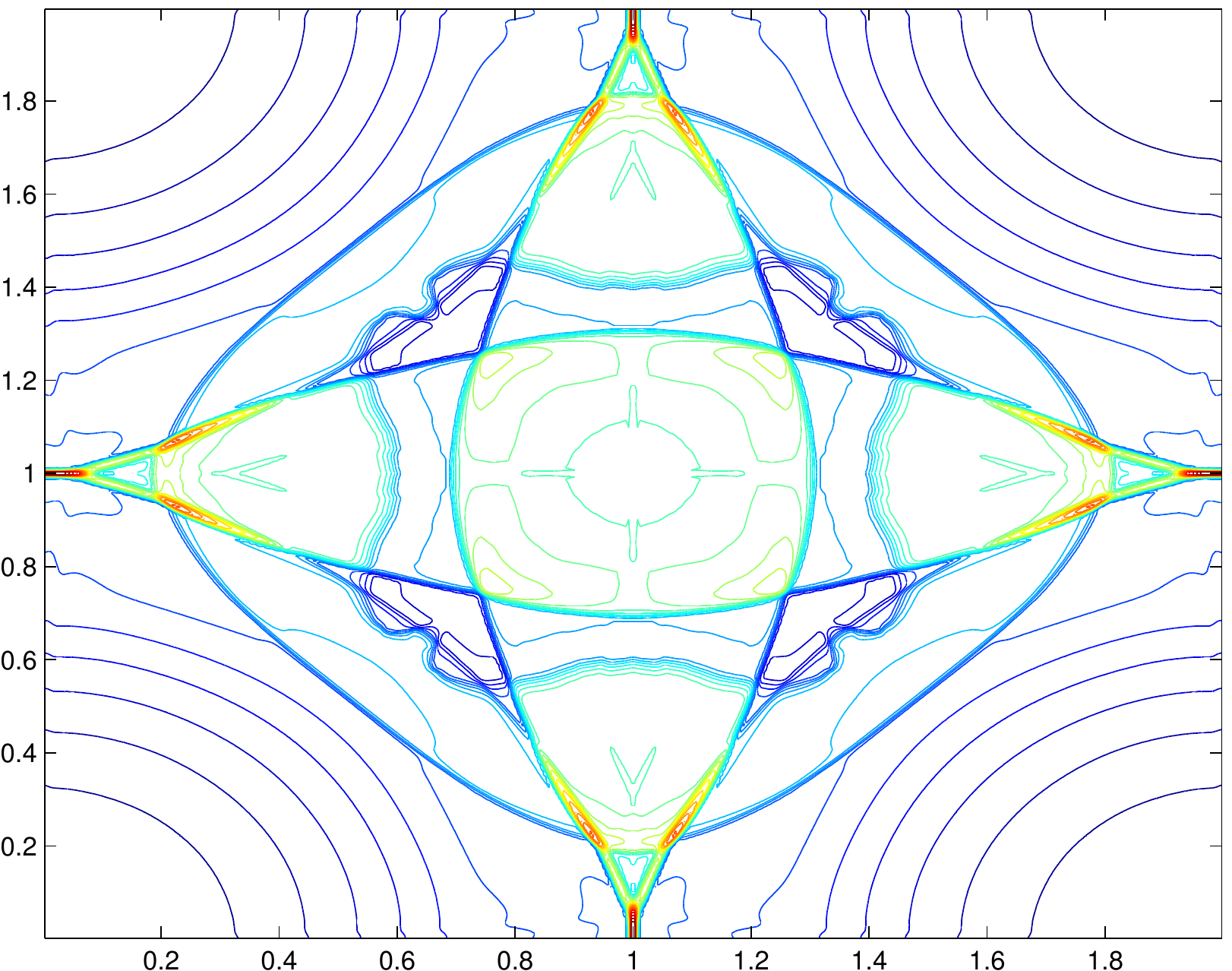}
 }
 \subfigure[$p$]{
   \includegraphics[width=4.5cm,height=4.5cm]{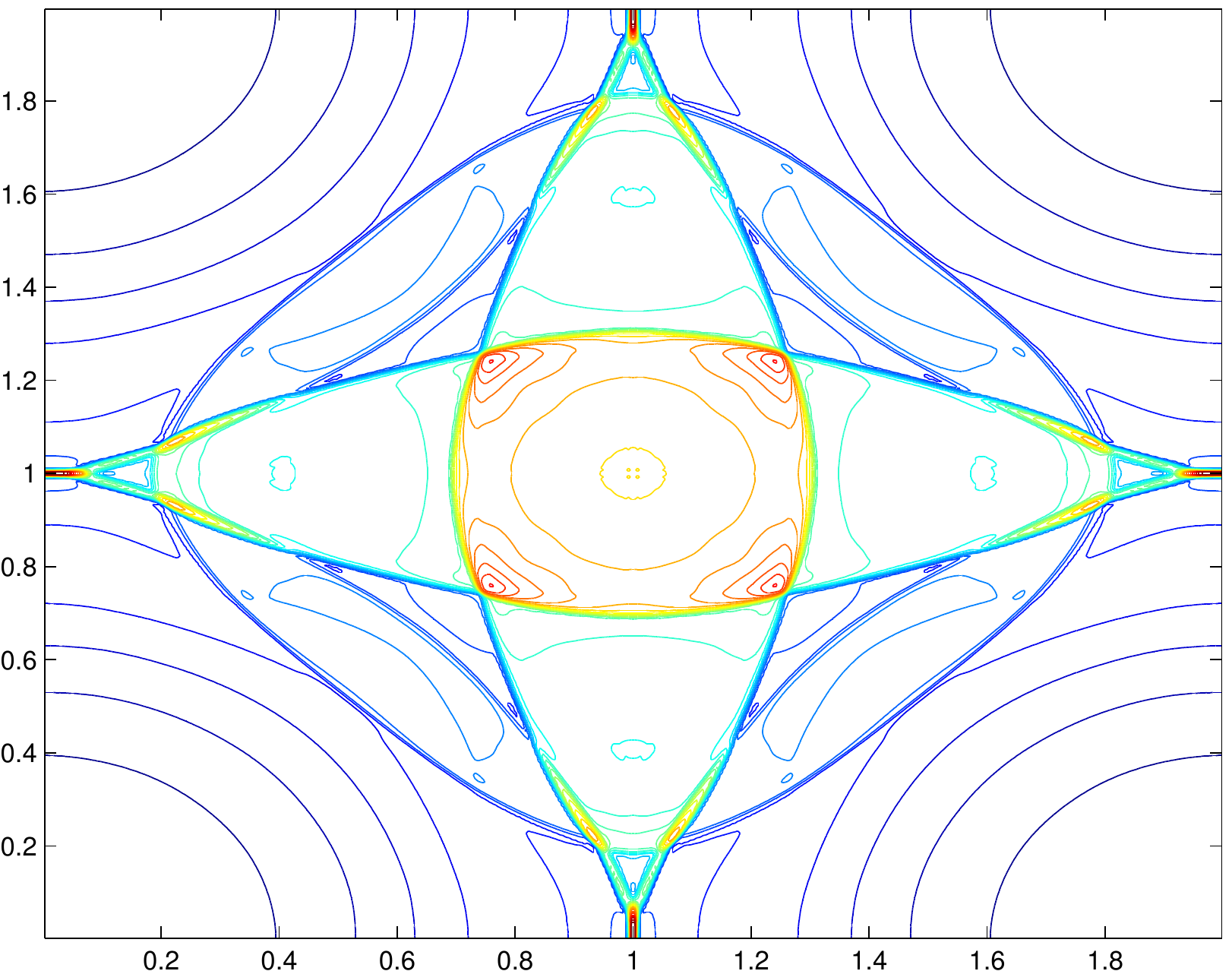}
 }

 \subfigure[$u_1$]{
   \includegraphics[width=4.5cm,height=4.5cm]{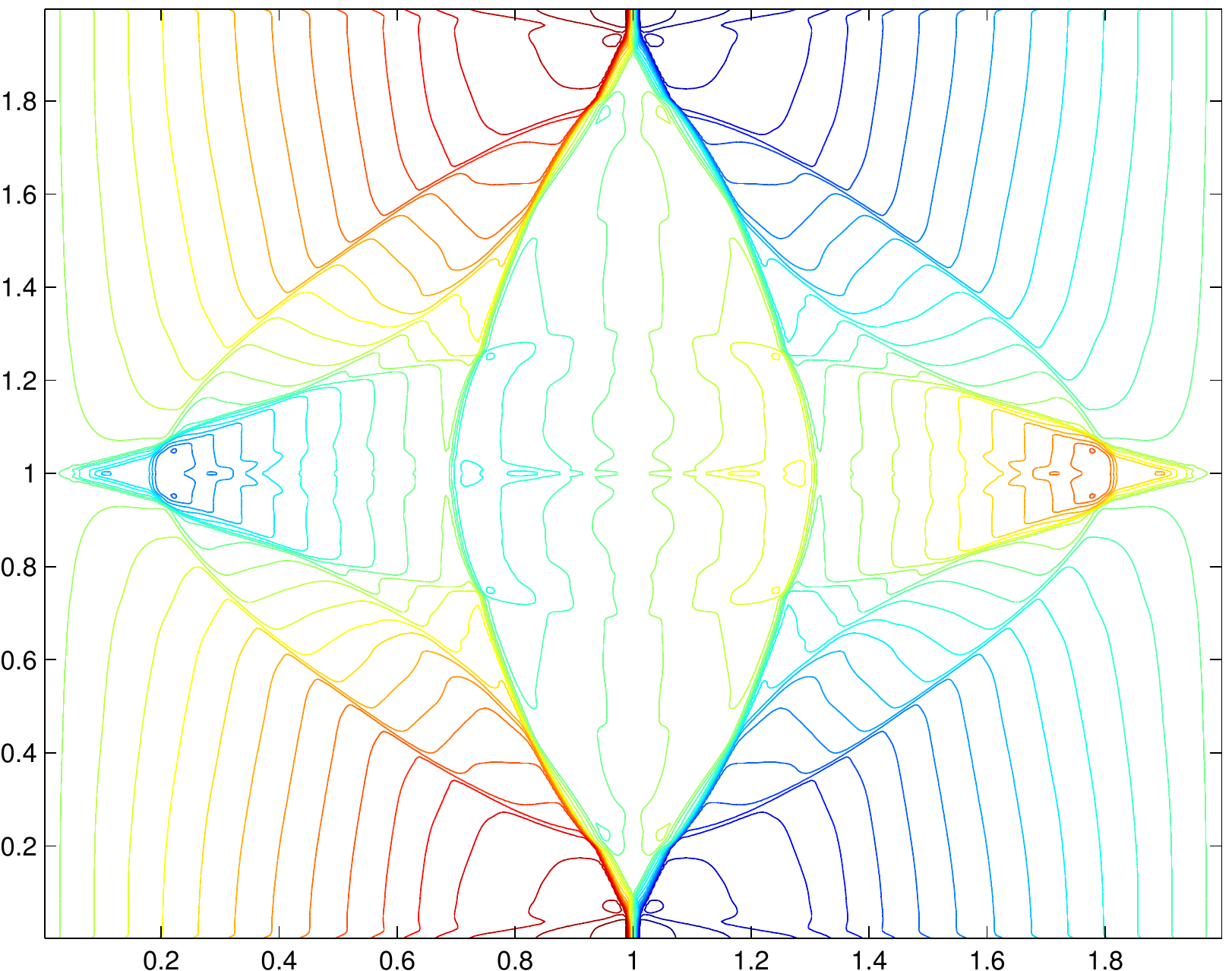}
 }
 \subfigure[$u_2$]{
   \includegraphics[width=4.5cm,height=4.5cm]{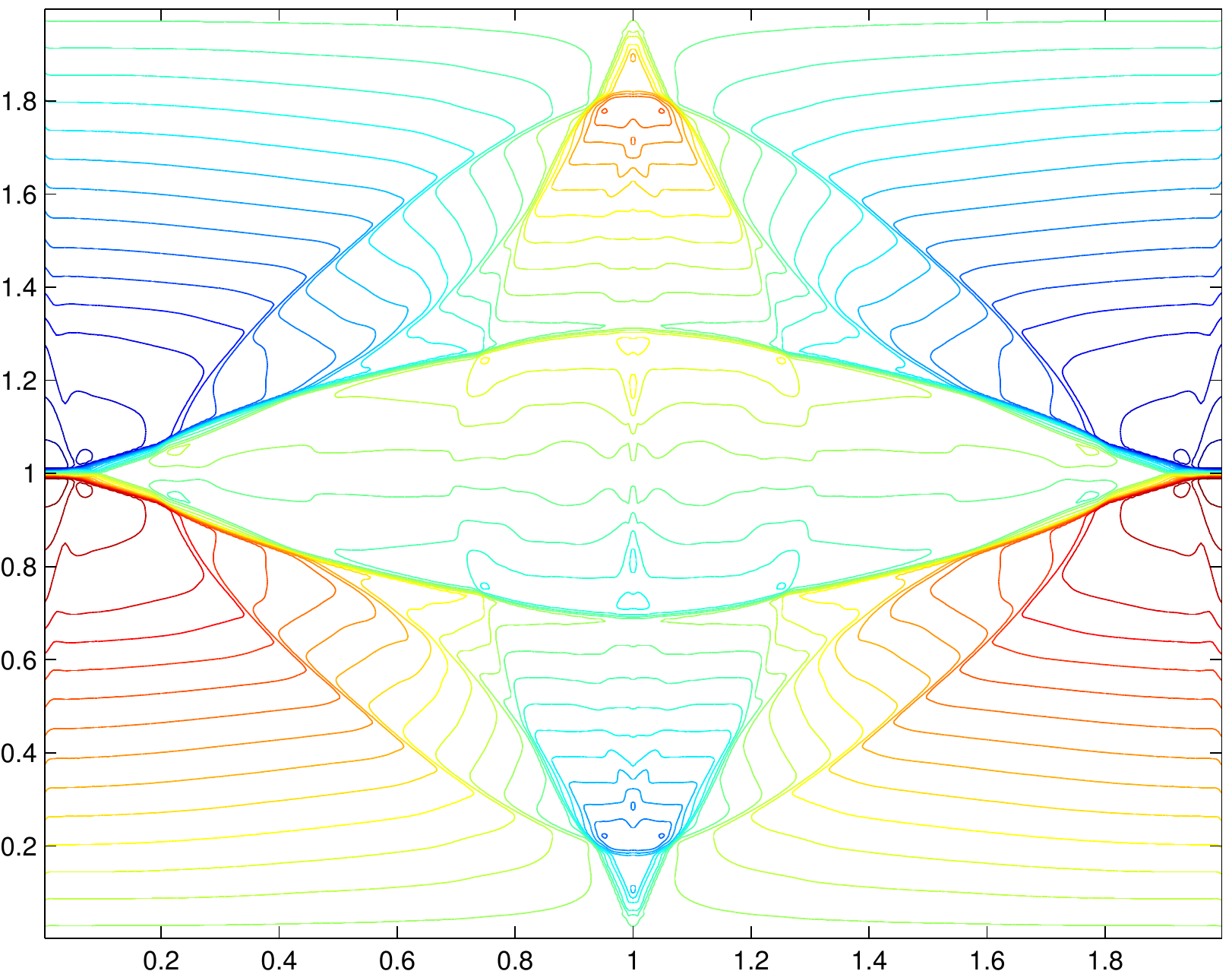}
 }
 \caption{Example \ref{ex:implosion}: The contours of the number density $ n $,   pressure $p$,  and velocities $u_1$ and  $u_2$ at $t = 3$ obtained by the BGK scheme with   $400\times400$  uniform cells.
 30 equally spaced contour lines are used.}
 \label{fig:implosion3}
\end{figure}

\begin{figure}[htbp]
 \centering
 \subfigure[$ n $]{
   \includegraphics[width=4.5cm,height=4.5cm]{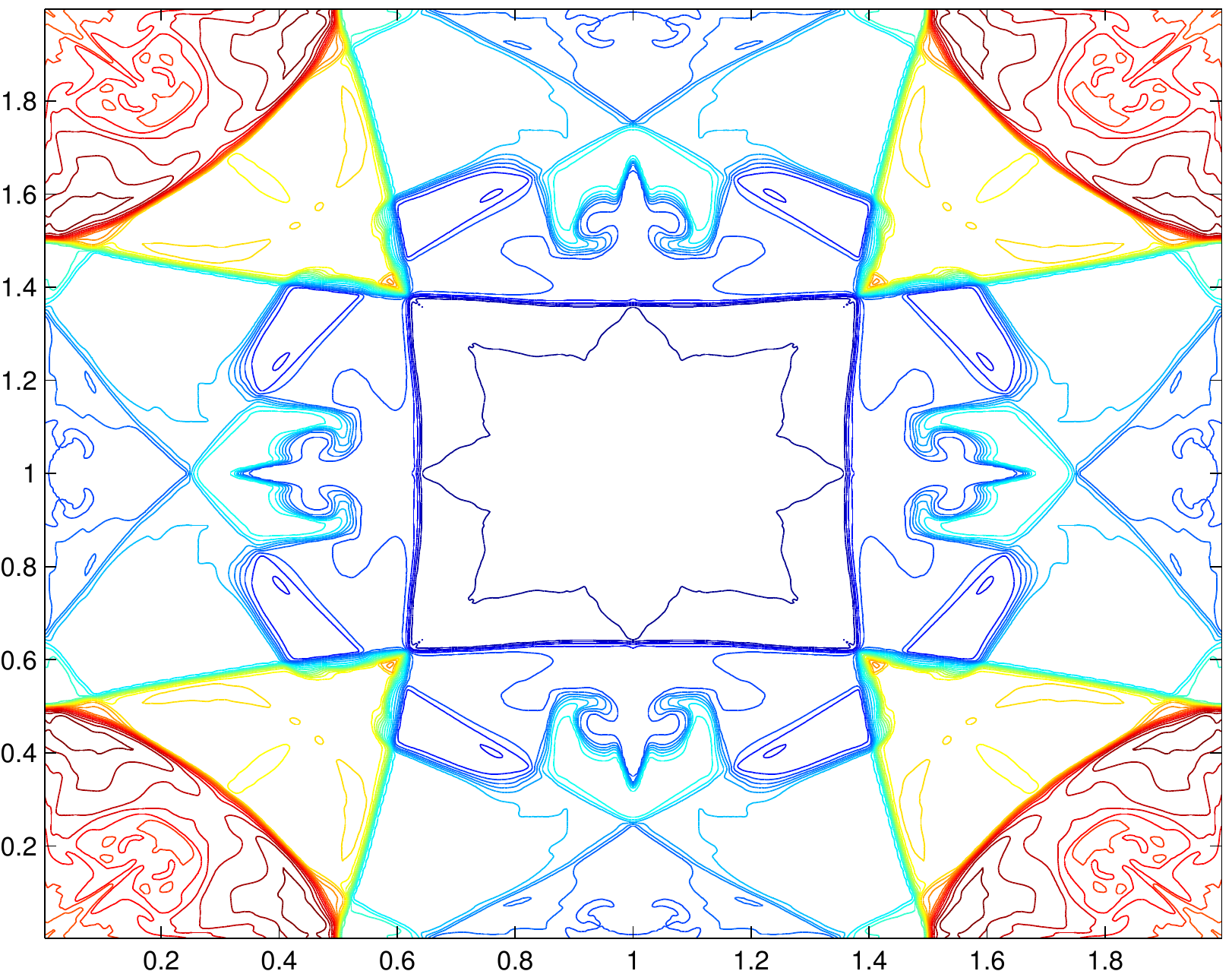}
 }
 \subfigure[$p$]{
   \includegraphics[width=4.5cm,height=4.5cm]{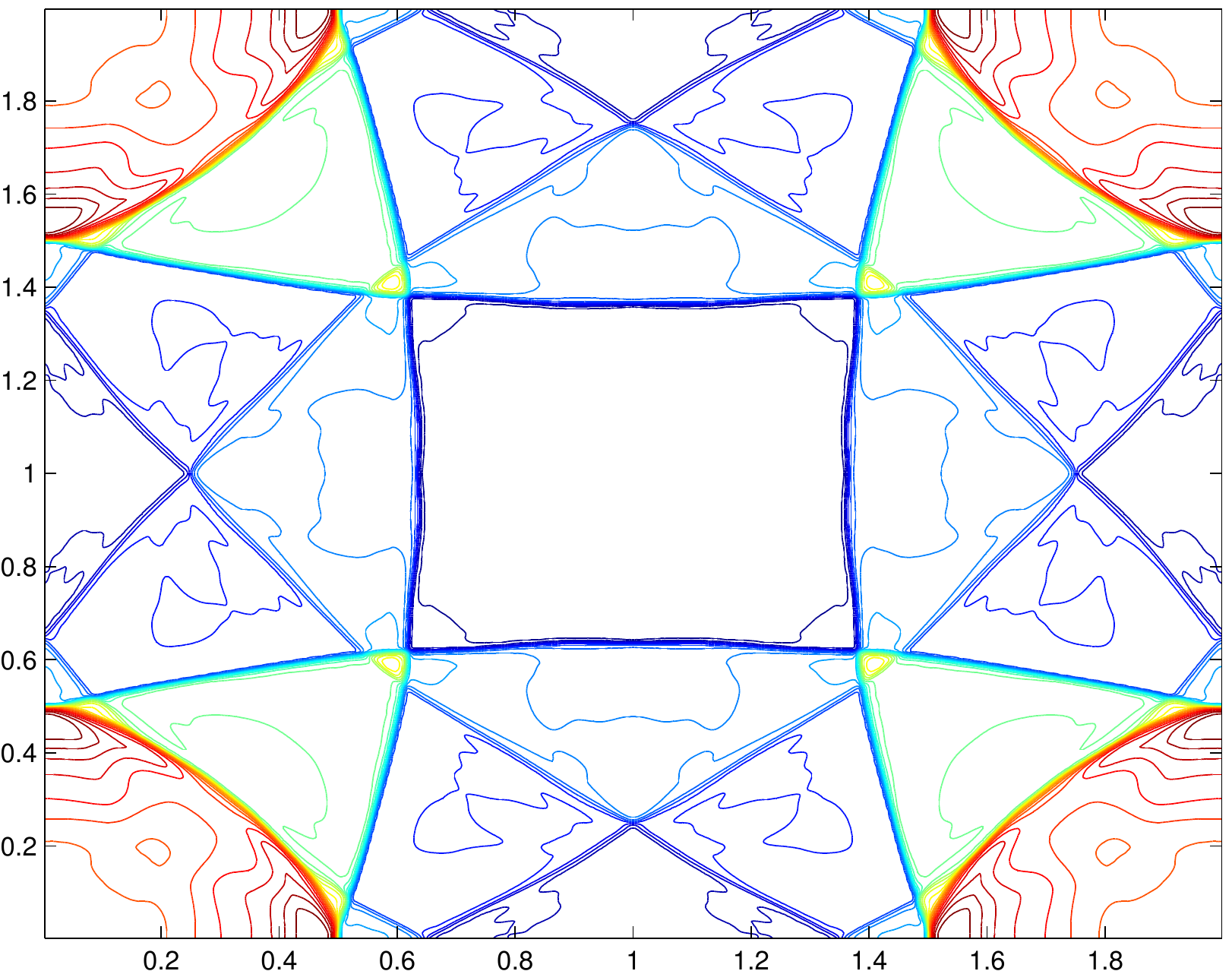}
 }

 \subfigure[$u_1$]{
   \includegraphics[width=4.5cm,height=4.5cm]{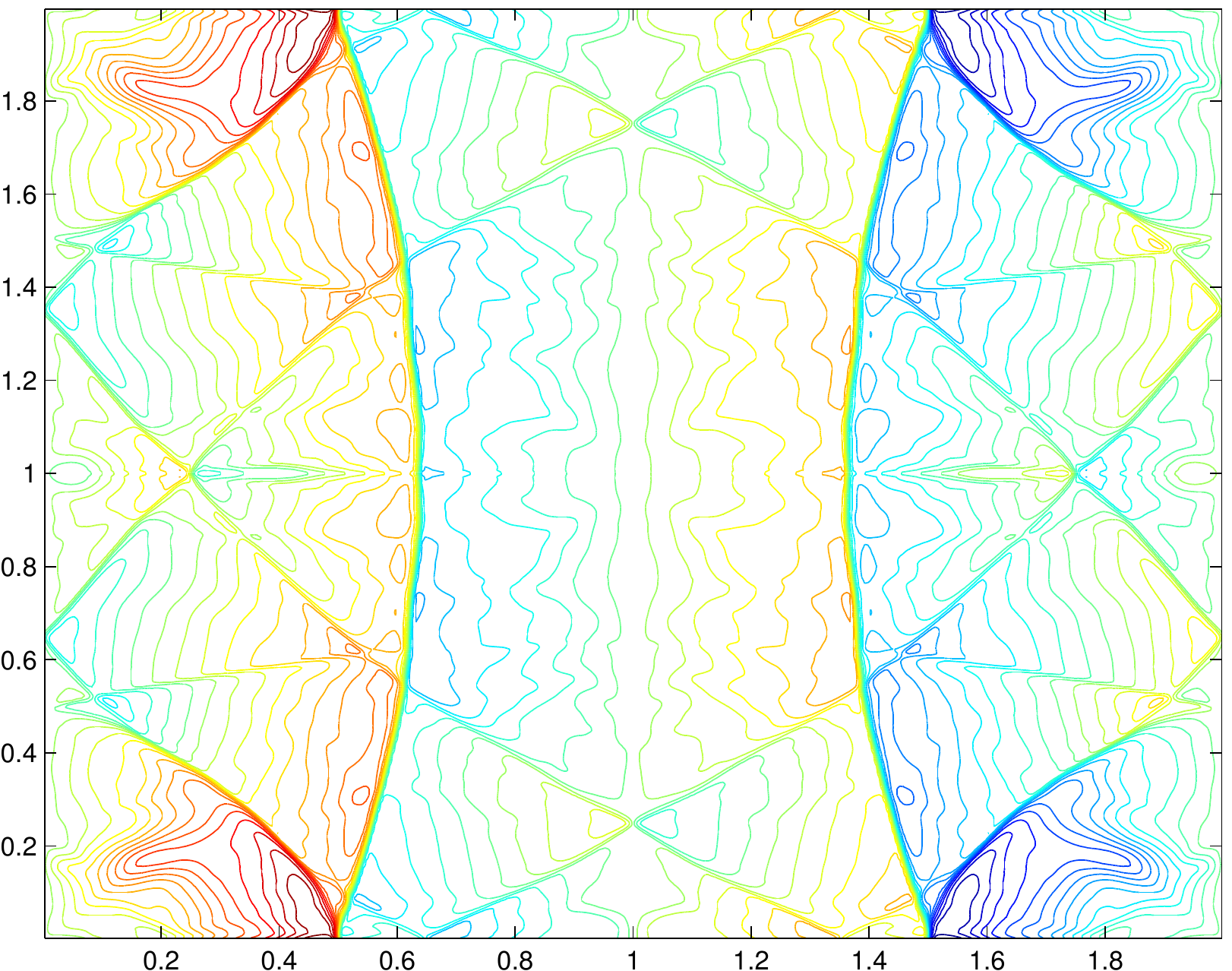}
 }
 \subfigure[$u_2$]{
   \includegraphics[width=4.5cm,height=4.5cm]{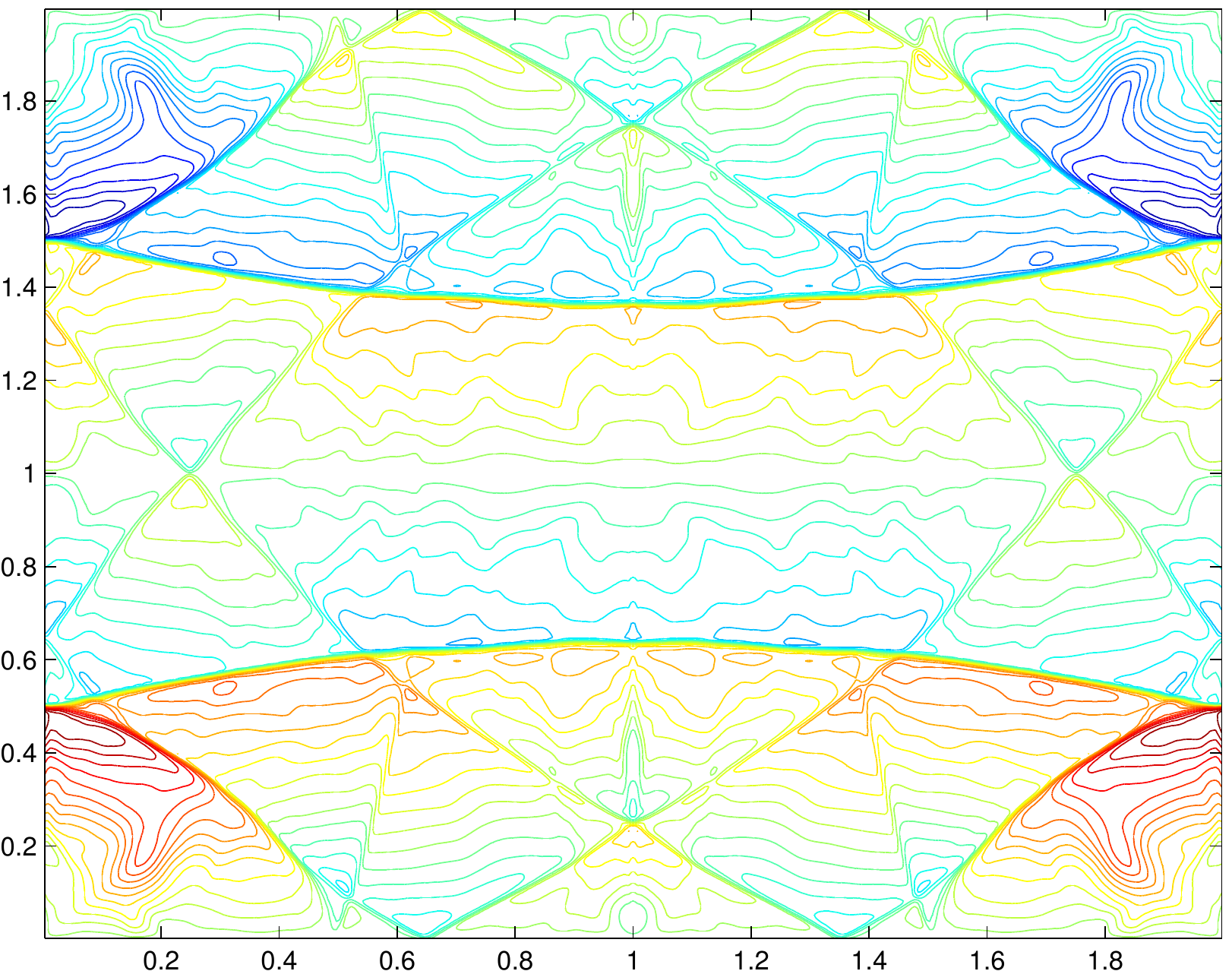}
 }
 \caption{Example \ref{ex:implosion}: Same as Fig. \ref{fig:implosion3} except for  $t=12$.}
 \label{fig:implosion12}
\end{figure}

\begin{figure}
 \centering
 \subfigure[$t=3$]{
   \includegraphics[width=0.4\textwidth]{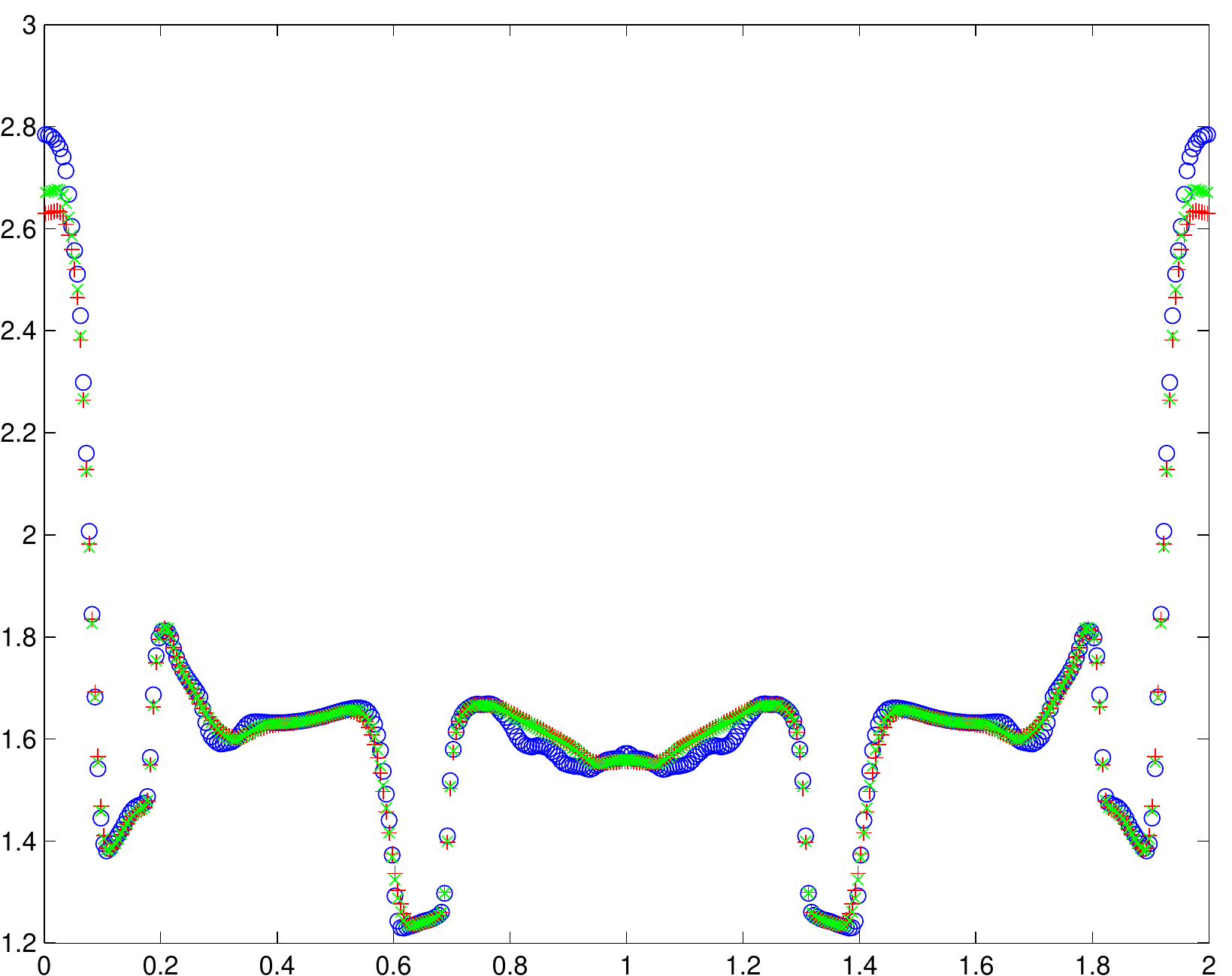}
 }
 \subfigure[$t=12$]{
   \includegraphics[width=0.4\textwidth]{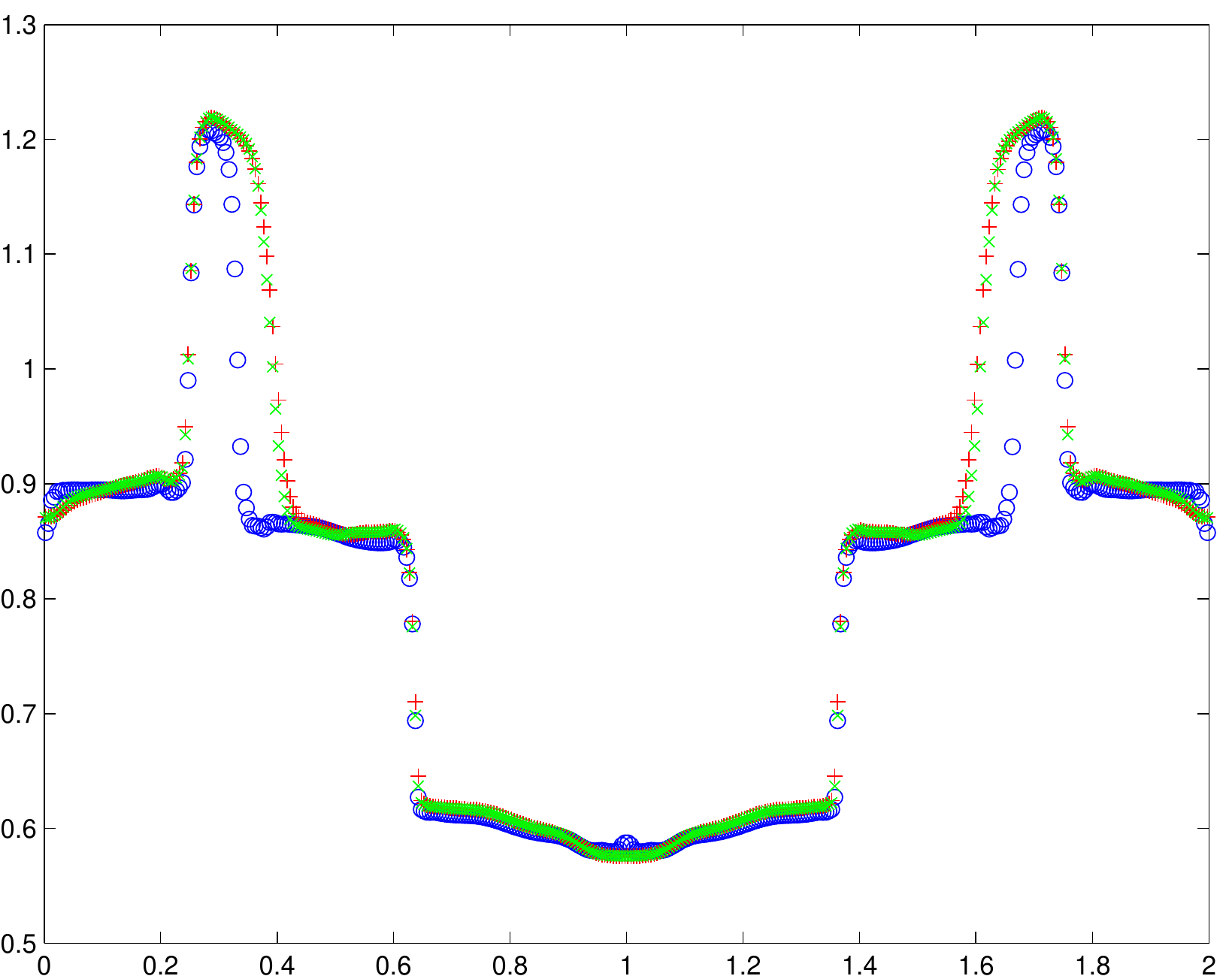}
 }
 \caption{Example \ref{ex:implosion}: Comparison of the number density $ n $ along the line $y = 1$.
 The symbols ``$\circ$", {``$\times$"} and ``{+}" denote the solutions obtained by the BGK, {BGK-type}, and KFVS schemes on the uniform mesh of $400\times400$  cells, respectively.}
 \label{fig:implosioncompare}
\end{figure}

\begin{example}[Cylindrical explosion problem]\rm\label{ex:cylindrical}
Initially, there is  a high-density, high-pressure circle with a
 radius of 0.2 embedded in a low density, low pressure medium within
  a squared domain $[0,1]\times[0,1]$.
   Inside the circle, the number density is 2 and the pressure is 10, while outside the circle
   the number density and pressure are 1 and  0.3, respectively. The velocities are zero everywhere.
\end{example}

Fig. \ref{fig:ccontour} displays the the contour plots  at $t = 0.2$ obtained by using
the BGK scheme on the mesh of $200\times200$ uniform cells.
The results show that a circular shock wave and a circular discontinuity travel away from the center, and a circular rarefaction wave propagates toward the center of the circle.
Fig. \ref{fig:ccompare} gives a comparison of the number density and pressure along the line $y=0.5$
obtained by the BGK, BGK-type, and KFVS schemes, respectively. The symbols ``{$\circ$}" , {``$\times$"} and ``{+}" denote the solutions obtained by using the BGK, {BGK-type} and KFVS schemes. It can be observed that
all of them give closer results. However, the BGK scheme resolves the discontinuities better than the KFVS.

\begin{figure}
 \centering
 \subfigure[$ n $]{
   \includegraphics[width=4.5cm,height=4.5cm]{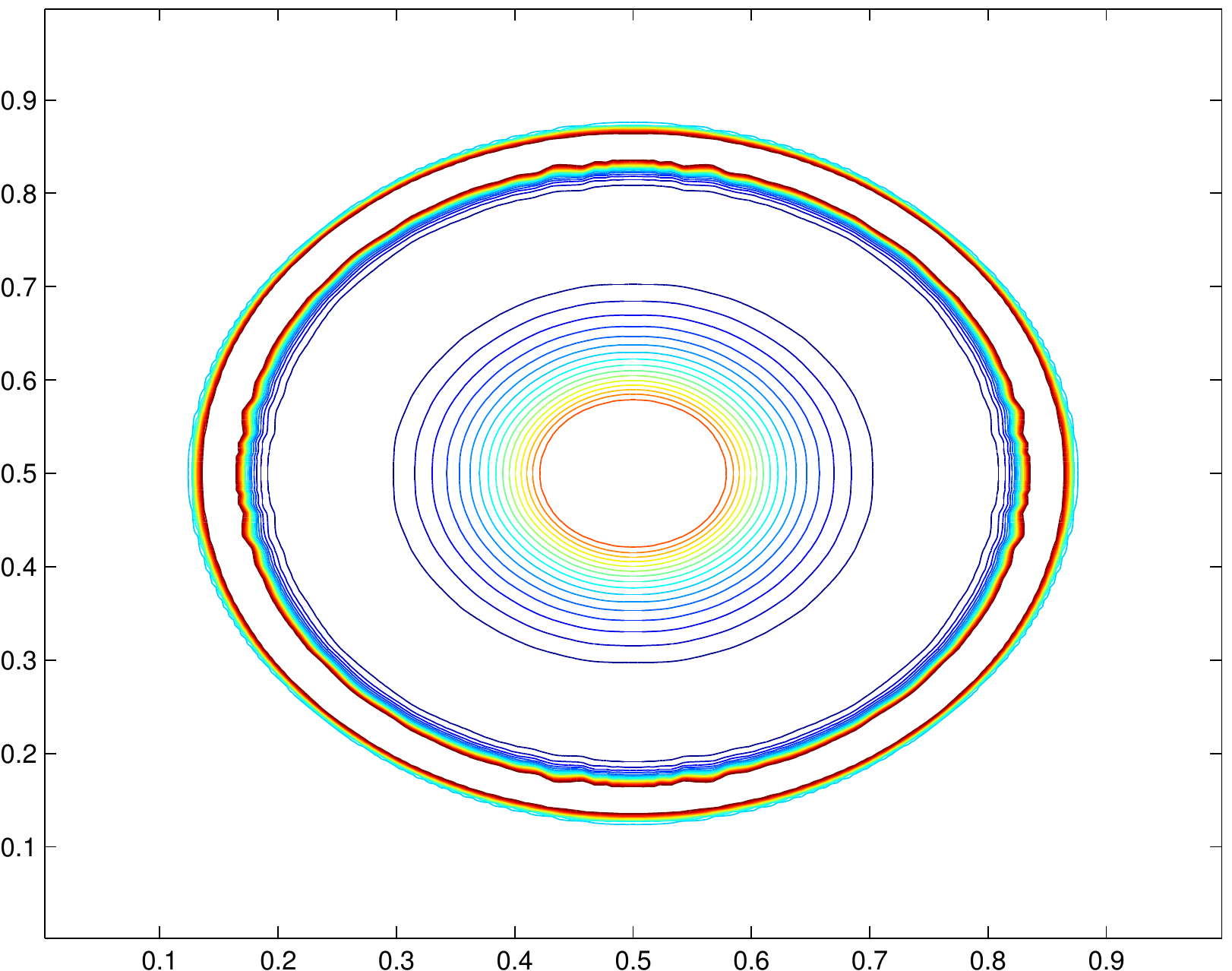}
 }
 \subfigure[$p$]{
   \includegraphics[width=4.5cm,height=4.5cm]{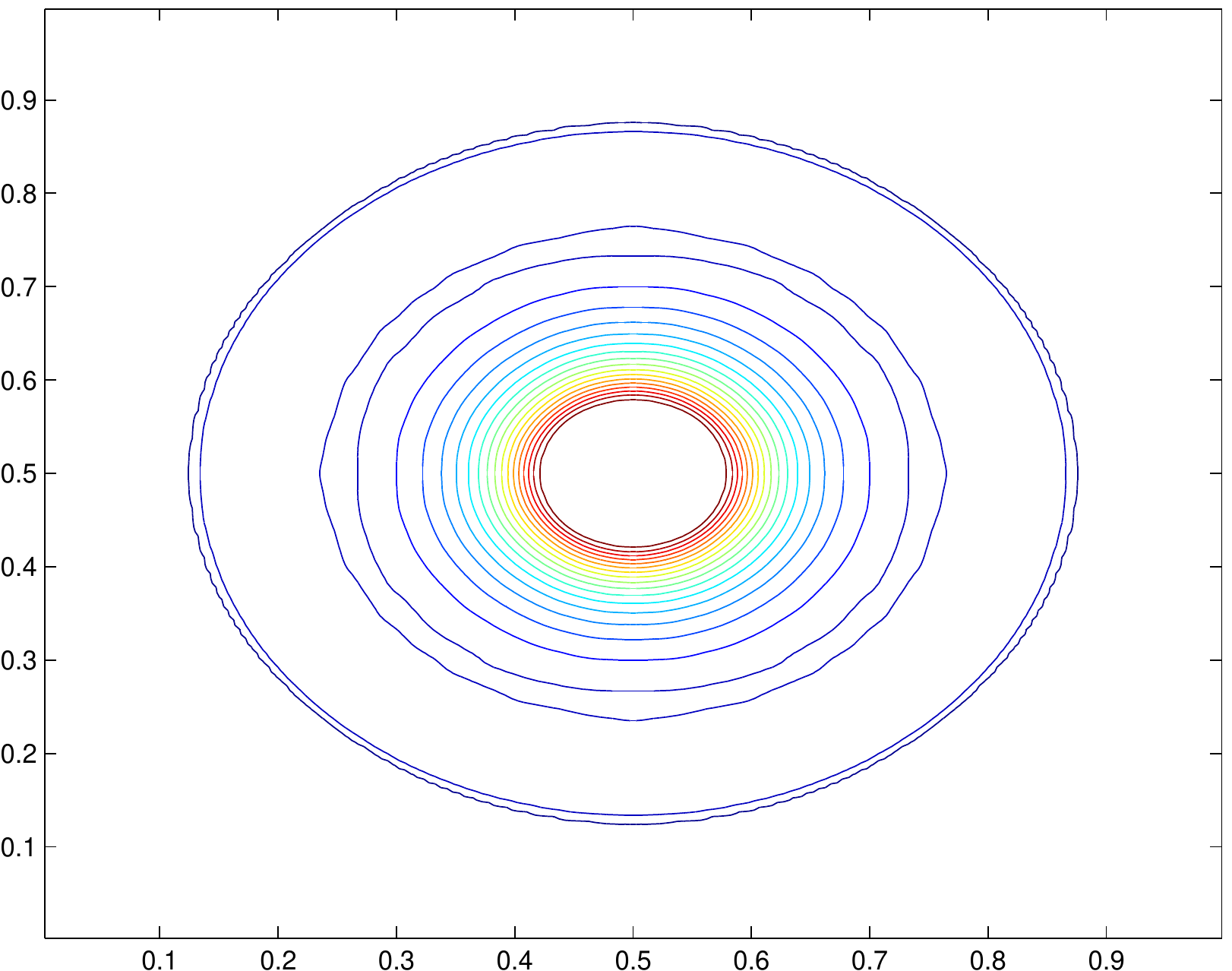}
 }

 \subfigure[$u_1$]{
   \includegraphics[width=4.5cm,height=4.5cm]{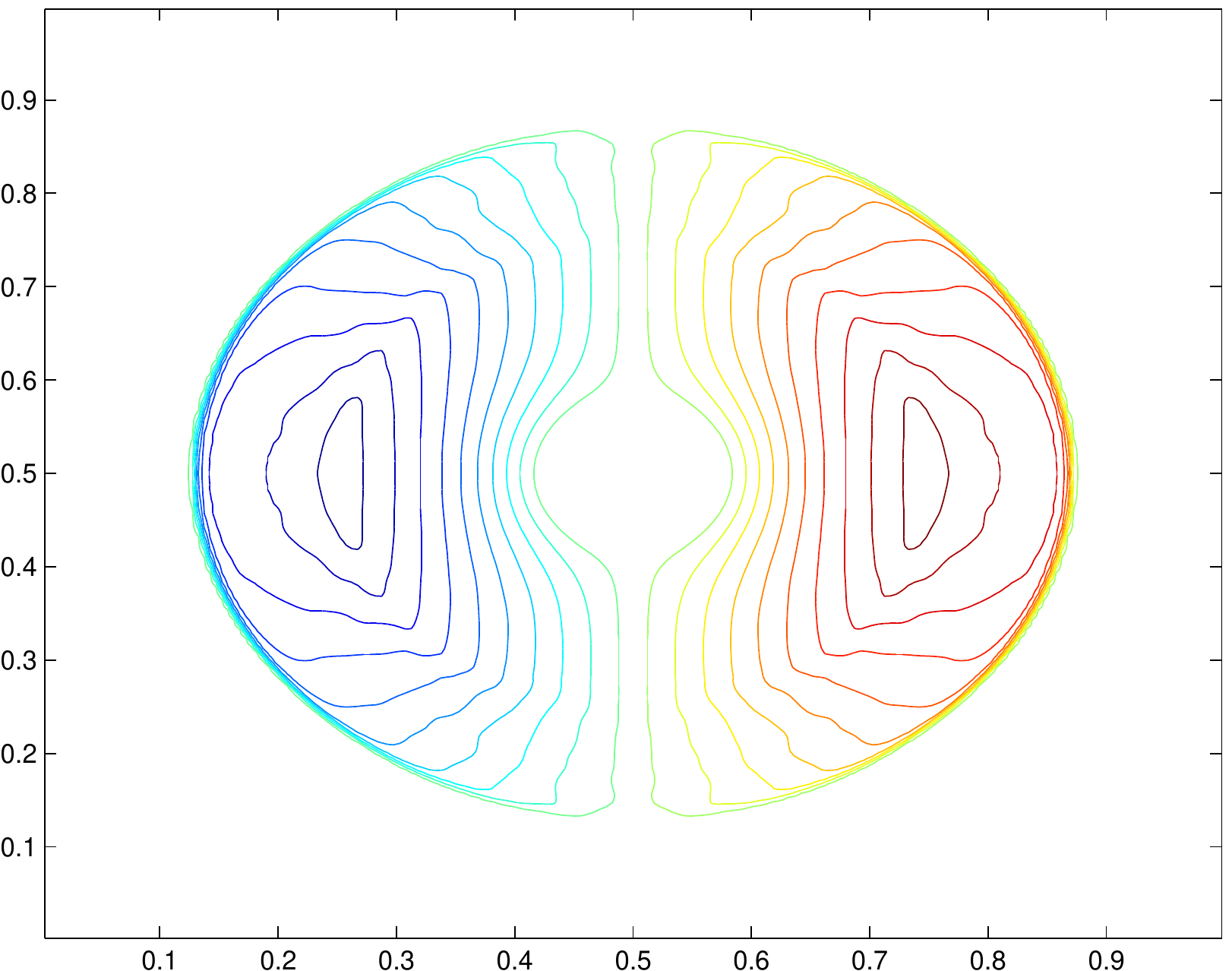}
 }
 \subfigure[$u_2$]{
   \includegraphics[width=4.5cm,height=4.5cm]{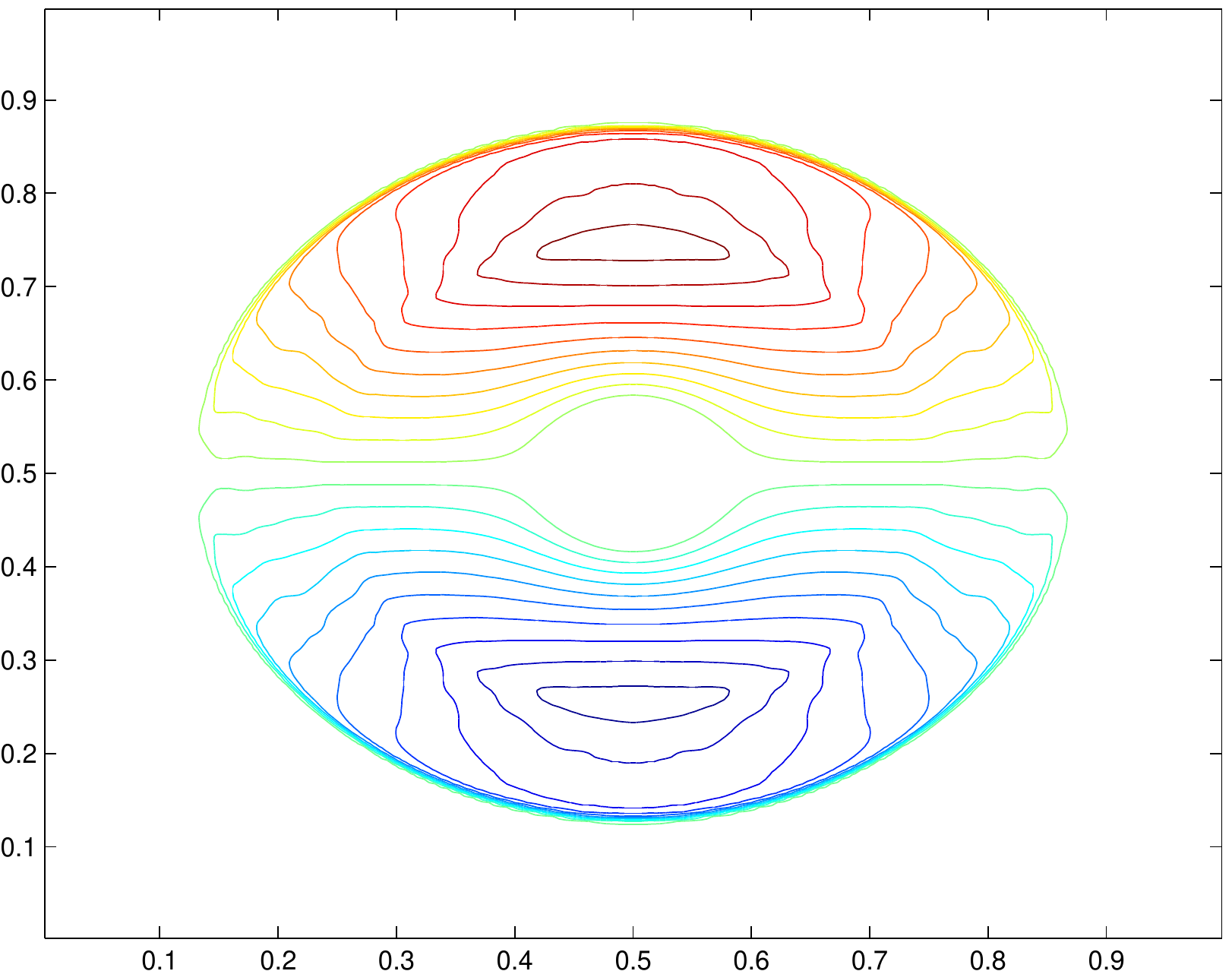}
 }
 \caption{Example \ref{ex:cylindrical}: The contours of the number density $ n $,
  pressure $p$, and  velocities $u_1$ and $u_2$ at $t = 0.2$ obtained by the BGK scheme
 on the mesh of $200\times200$ uniform cells. 20 equally spaced contour lines are used.}
 \label{fig:ccontour}
\end{figure}

\begin{figure}
 \centering
 \subfigure[$ n $]{
   \includegraphics[width=0.4\textwidth]{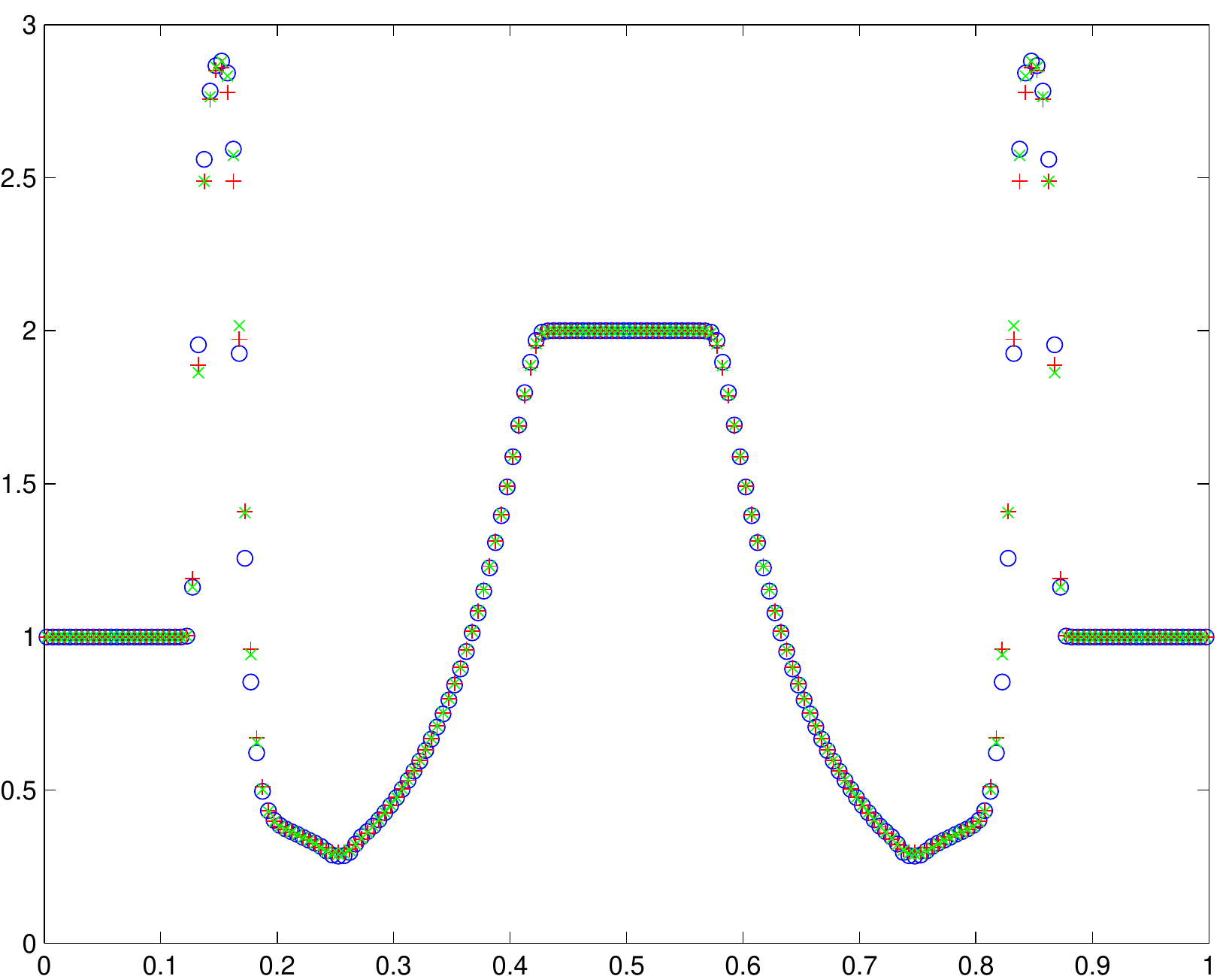}
 }
 \subfigure[$p$]{
   \includegraphics[width=0.4\textwidth]{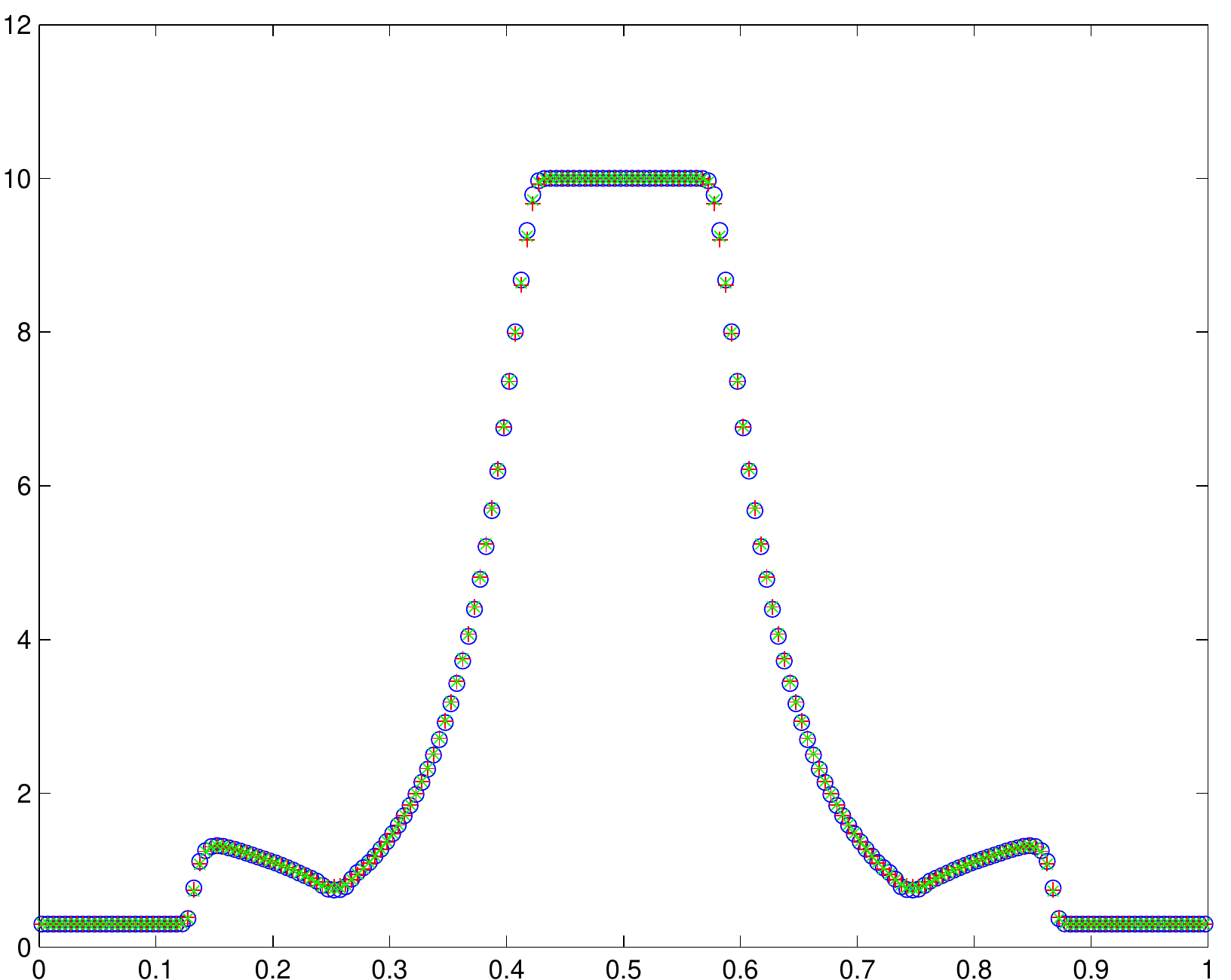}
 }
 \caption{Example \ref{ex:cylindrical}: Comparison of the number density $ n $ and pressure $p$ along the line $y = 0.5$. The symbols ``{$\circ$}", ``$\times$", and ``+" denote the solutions obtained by the BGK, BGK-type, and KFVS schemes
 on the mesh of $200\times200$ uniform cells, respectively.}
 \label{fig:ccompare}
\end{figure}

\begin{example}[Ultra-relativistic jet]\rm\label{ex:Jet}
 The dynamics of relativistic jet relevant in astrophysics has been  widely studied by numerical methods in the literature \cite{3DJET1999,RAM2006,Marti1997}.
 This test  simulates a relativistic jet
 with  the computational region  $[0,12]\times[-3.5,3.5]$  and $\alpha = C_1=C_2=1$.
 The initial states for the relativistic jet beam are
 \begin{align*}
   ( n _b,u_{1,b},u_{2,b},p_b)=(0.01,0.99,0.0,10.0),\ \
   ( n _m,u_{1,m},u_{2,m},p_m)=(1.0,0.0,0.0,10.0),
 \end{align*}
 where the subscripts $b$ and $m$  correspond to the beam and  medium, {respectively}.
\end{example}
The initial relativistic jet is injected through a unit wide nozzle located at the middle of left boundary while a reflecting boundary is used outside of the nozzle.
Outflow boundary conditions with zero gradients of variables are imposed at the other part of the domain boundary. Fig. \ref{fig:Jet}  shows the numerical results at $t=5,6,7,8$ obtained by our BGK scheme
 on the mesh of $600\times350$ uniform cells.
The average speed of the jet head is 0.91 which matches the theoretical estimate 0.87 in \cite{Marti1997}.
\begin{figure}
 \centering
 \subfigure[$t=5$]{
   \includegraphics[width=0.4\textwidth]{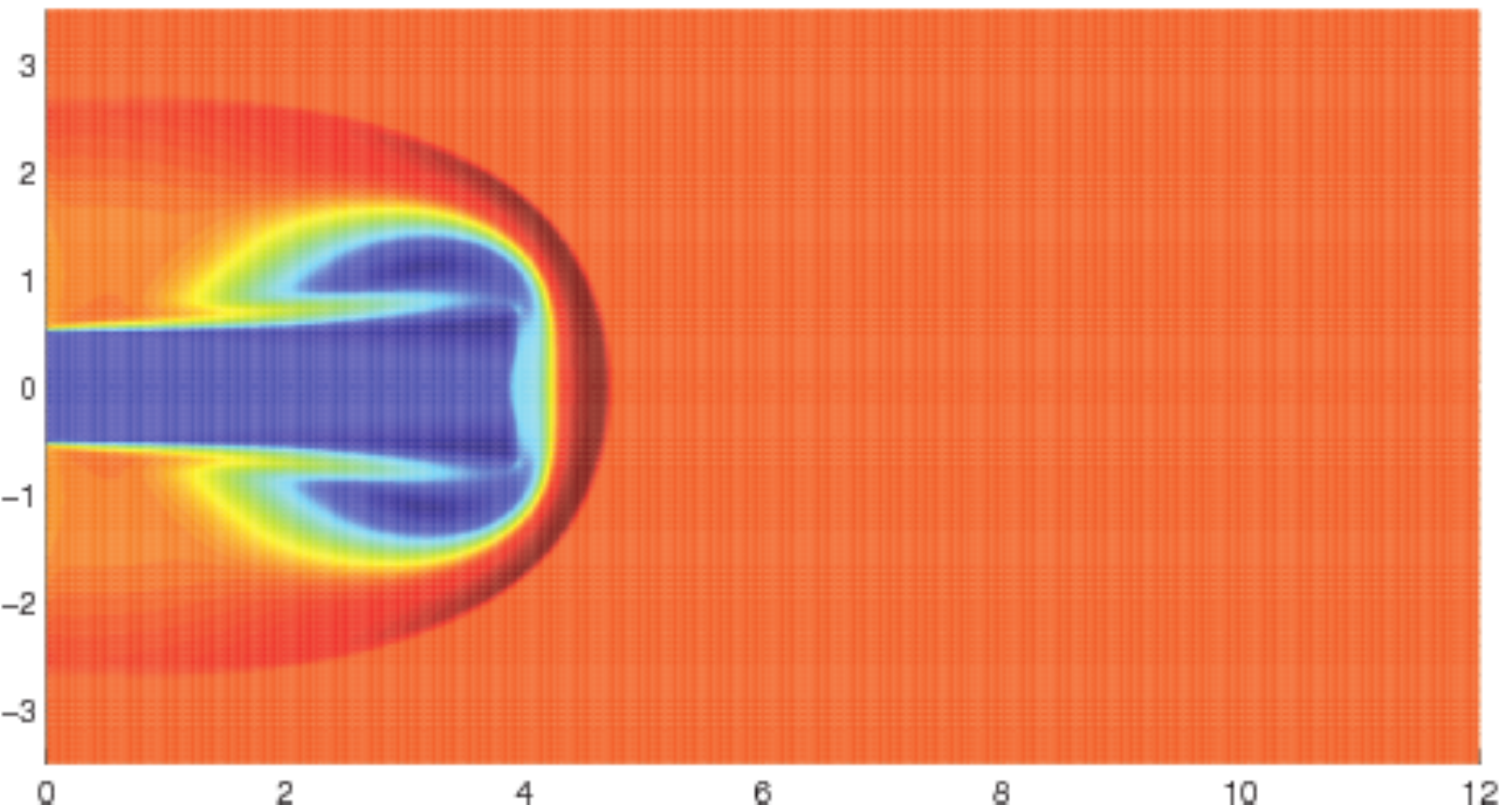}
 }
 \subfigure[$t=6$]{
   \includegraphics[width=0.4\textwidth]{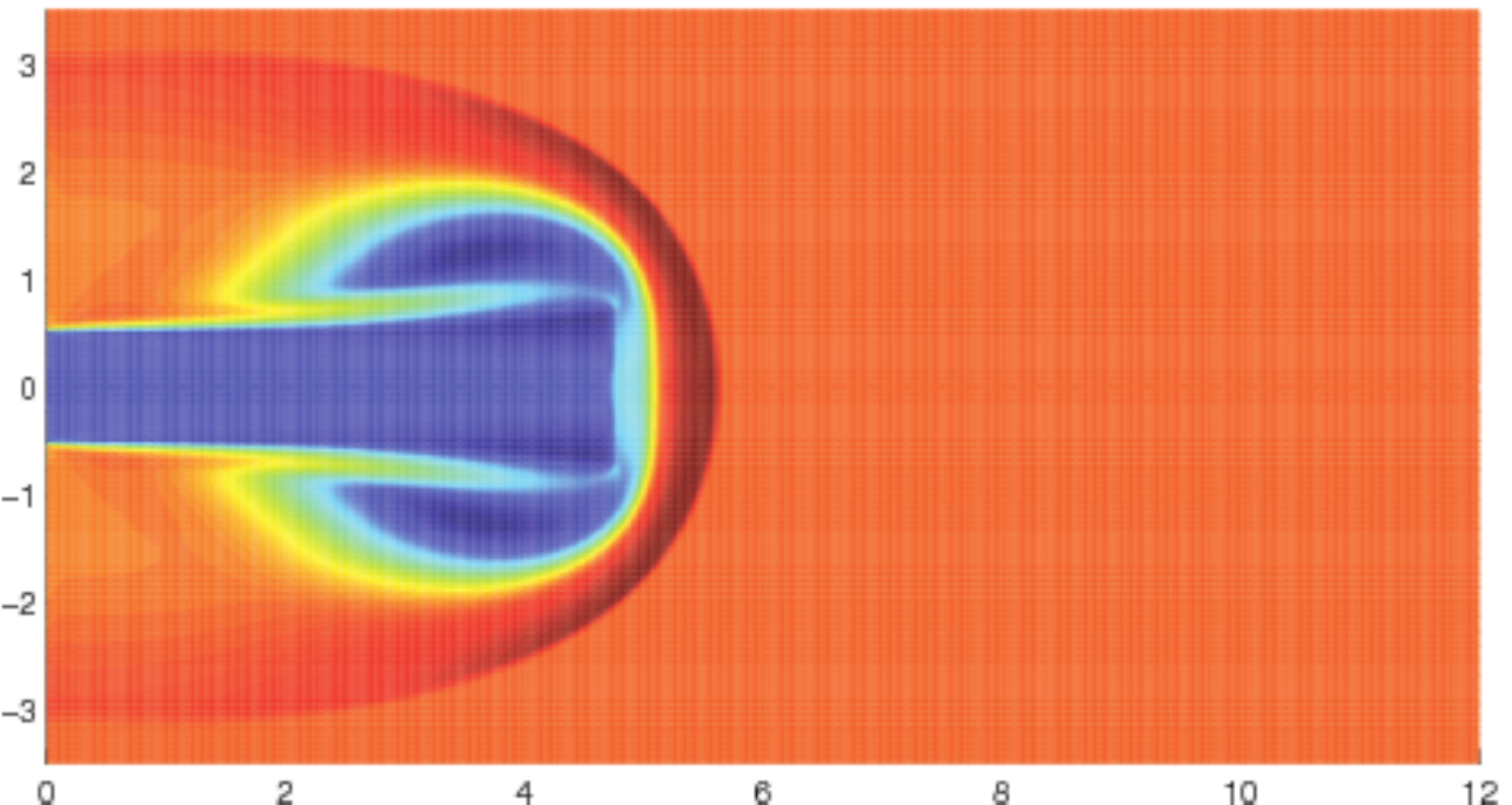}
 }
 \subfigure[$t=7$]{
   \includegraphics[width=0.4\textwidth]{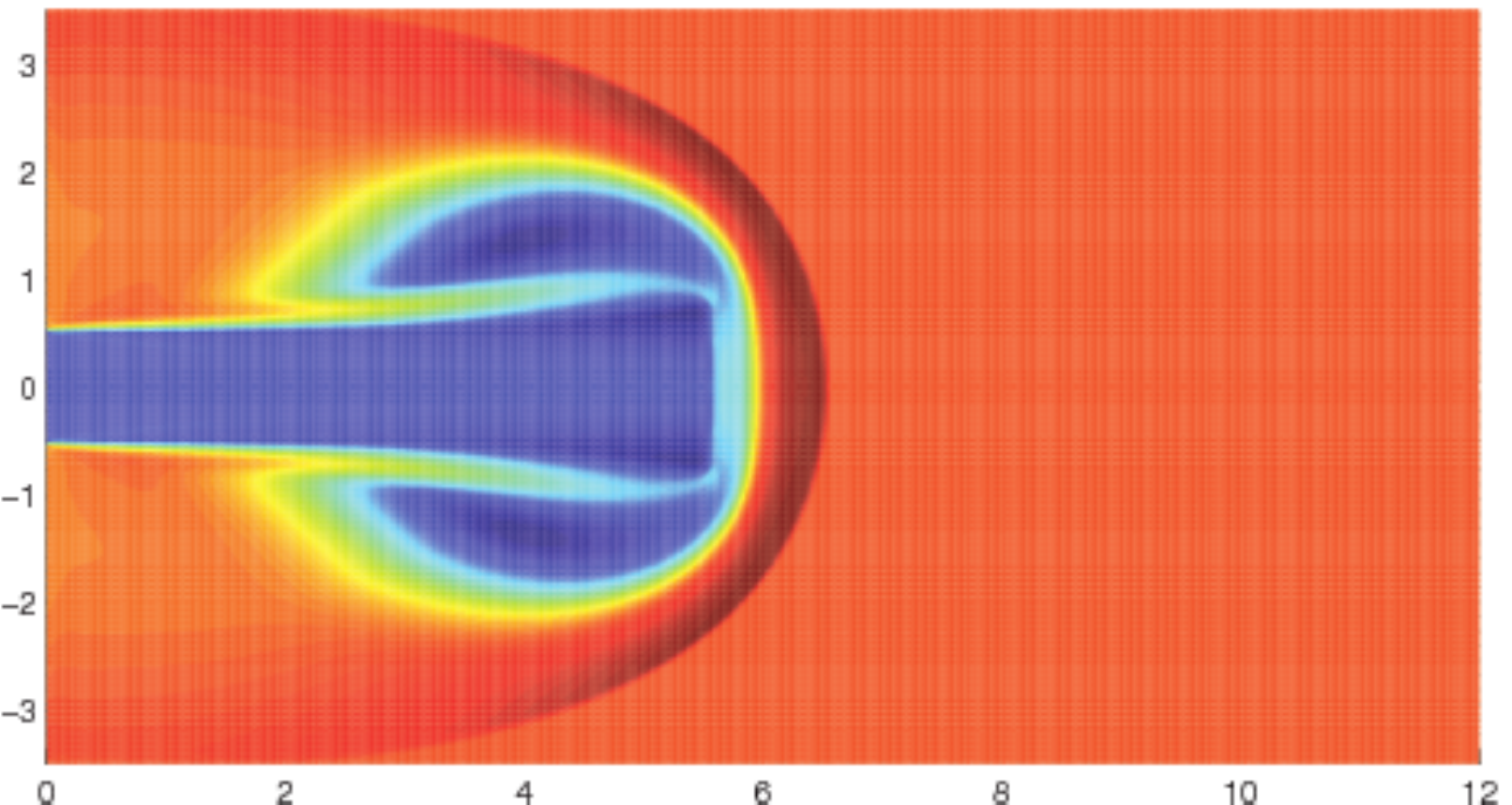}
 }
 \subfigure[$t=8$]{
   \includegraphics[width=0.4\textwidth]{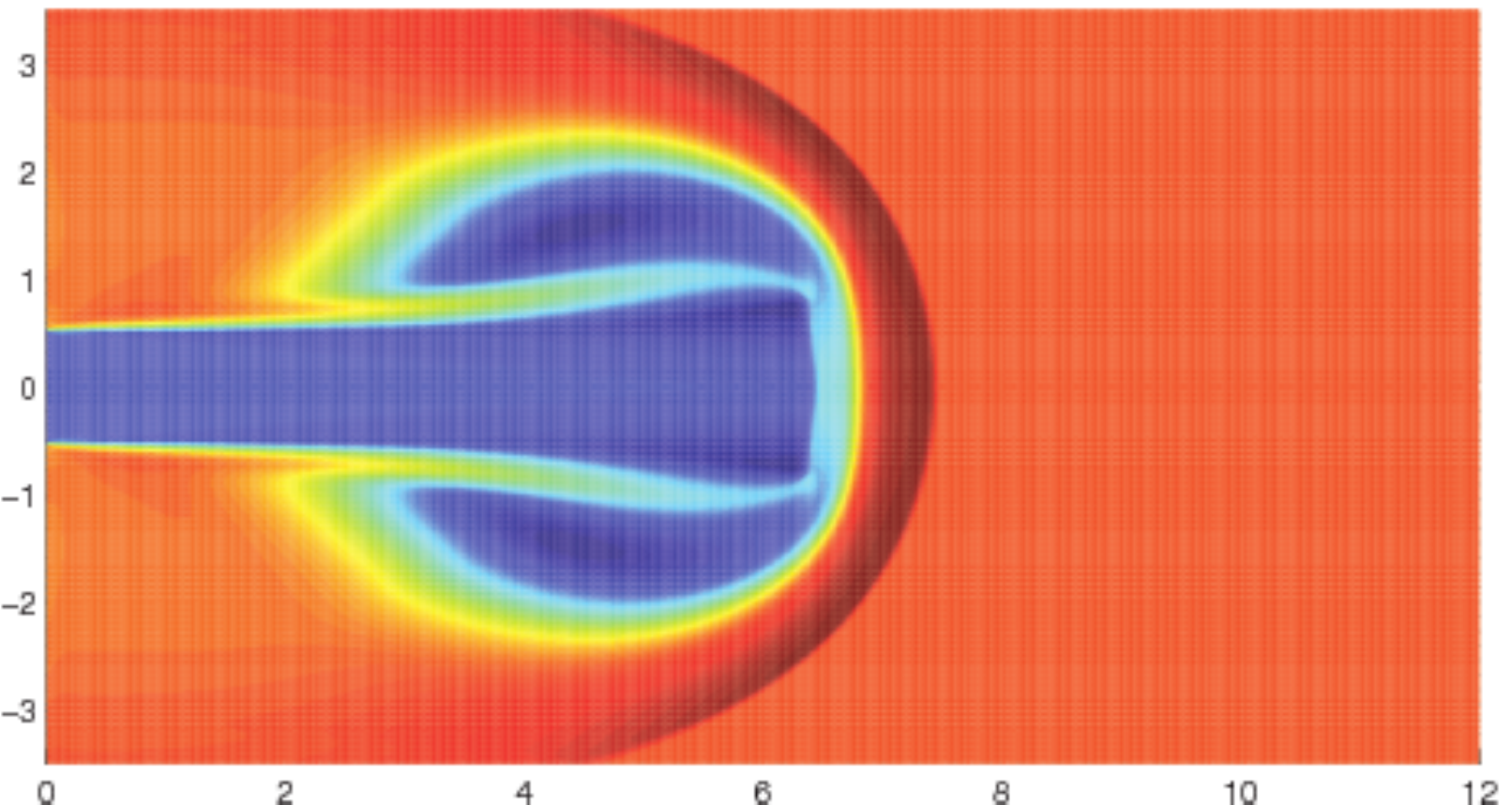}
 }
 \caption{Example \ref{ex:Jet}: Schlieren images of the number density logarithm
$\ln  n $  at several different times obtained by using the BGK
 scheme with $600\times350$ uniform cells in the domain $[0, 12]\times[-3.5, 3.5]$.}
 \label{fig:Jet}
\end{figure}

\subsection{Navier-Stokes case}
 This section designs two examples of  viscous {flow}  to test the genuine BGK
 scheme  \eqref{eq:moment1} for the ultra-relativistic Navier-Stokes equations.
 Because  the extrapolation \eqref{eq:extra} requires the numerical solutions at  $t=t_{n-1}$ and
 $t_{n-2}$, 
 the ``initial'' data at first several time levels have to be specified for
  the BGK scheme  in advance.
In the following examples, the macroscopic variables at $t=t_0+0.5\Delta t_0$ and $t_0+\Delta t_0$ are first obtained by using the initial data, time partial derivatives at $t=t_0$, and   BGK scheme proposed in Section \ref{sec:GKSNS}, where
the first order  partial derivatives in time are derived by using the exact solutions.
Then, the time partial derivatives at $t=t_0+\Delta t_0$ for the macroscopic variables are calculated by using the extrapolation \eqref{eq:extra}, and   the solutions are
further evolved in time by the BGK scheme with  the extrapolation \eqref{eq:extra}.

\begin{example}[longitudinally boost-invariant system]\rm \label{ex:boost}
  For ease of numerical implementation,
this test focuses on the longitudinally boost-invariant systems.
  They are conveniently described in curvilinear coordinates $x_m=(\tilde{t}, y, z, \eta)$, where $\tilde{t} = \sqrt{t^2-x^2}$ is the longitudinal proper time, $\eta=\frac{1}{2}\ln\left(\frac{t+x}{t-x}\right)$ is the space-time rapidity and $(y, z)$ are the usual Cartesian coordinates in the plane transverse to the beam direction $x$.
The systems are realized by assuming a specific ``scaling" velocity profile $u_1 = x/t$ along the beam direction, and the initial conditions are independent on the longitudinal reference frame (boost invariance), that is to say, they do not depend on $\eta$. The readers are referred to \cite{Song2009}  for more details.

Our computations consider the boost-invariant longitudinal expansion without transverse flow, so that the relativistic Navier-Stokes equations read
 \begin{equation*}
   \frac{\partial p}{\partial \tilde{t}} + \frac{4}{3\tilde{t}}\left(p-\frac{\mu}{3\tilde{t}}\right)=0, \ \
   \frac{\partial  n }{\partial \tilde{t}} = - n \partial_{\alpha} U^{\alpha}.
 \end{equation*}
 Since {$u_{1}=\frac{x}{t}$,} $U^0=t/\tilde{t}$ and $U^1=x/\tilde{t}$, it holds that $\partial_{\alpha} U^{\alpha} = 1/\tilde{t}$.
 Thus the equation for $ n $ becomes
 \begin{equation*}
   \frac{\partial  n }{\partial \tilde{t}} = -\frac{ n }{\tilde{t}}.
 \end{equation*}
 The analytical solutions can be given by
 \begin{equation*}
   p = C_1 \tilde{t}^{-\frac{4}{3}} + \frac{4}{3}\mu\tilde{t}^{-1},\ \
    n   = C_2\tilde{t}^{-1},
 \end{equation*}
 where $C_1=p_0(t_0^2-x_0^2)^{{\frac{2}{3}}}-\frac{4\mu}{3}(t_0^2-x_0^2)^{{\frac{1}{6}}}$ and $C_2= n _0\sqrt{t_0^2-x_0^2}$.
 We take $x_0=0, t_0=1, p_0=1,  n _0=1$, $\mu=0.0005$, and  $\Omega=[-\frac{t_0}{2},\frac{t_0}{2}]$.
 Moreover, the time partial derivatives of $ n , {u_{1}}, p$ at $t={t_0}$ are given by the exact solution.

Fig. \ref{fig:boost} shows the number density, velocity and pressure at $t=1.2$ obtained by
our 1D BGK scheme  with 20 cells (``${\triangle}$") and 40 cells
(``{$\circ$}"), respectively. The results show that the numerical results predicted by
our BGK scheme   fit the exact solutions very well.
Table \ref{tab:boost} lists the $l^1$- and $l^2$-errors at $t=1.2$
and corresponding convergence rates for our BGK scheme.
Those data show that a second-order rate of convergence can be obtained by our BGK scheme.
\end{example}

\begin{figure}[htbp]
 \centering
 \subfigure[$ n $]{
 \includegraphics[width=0.3\textwidth]{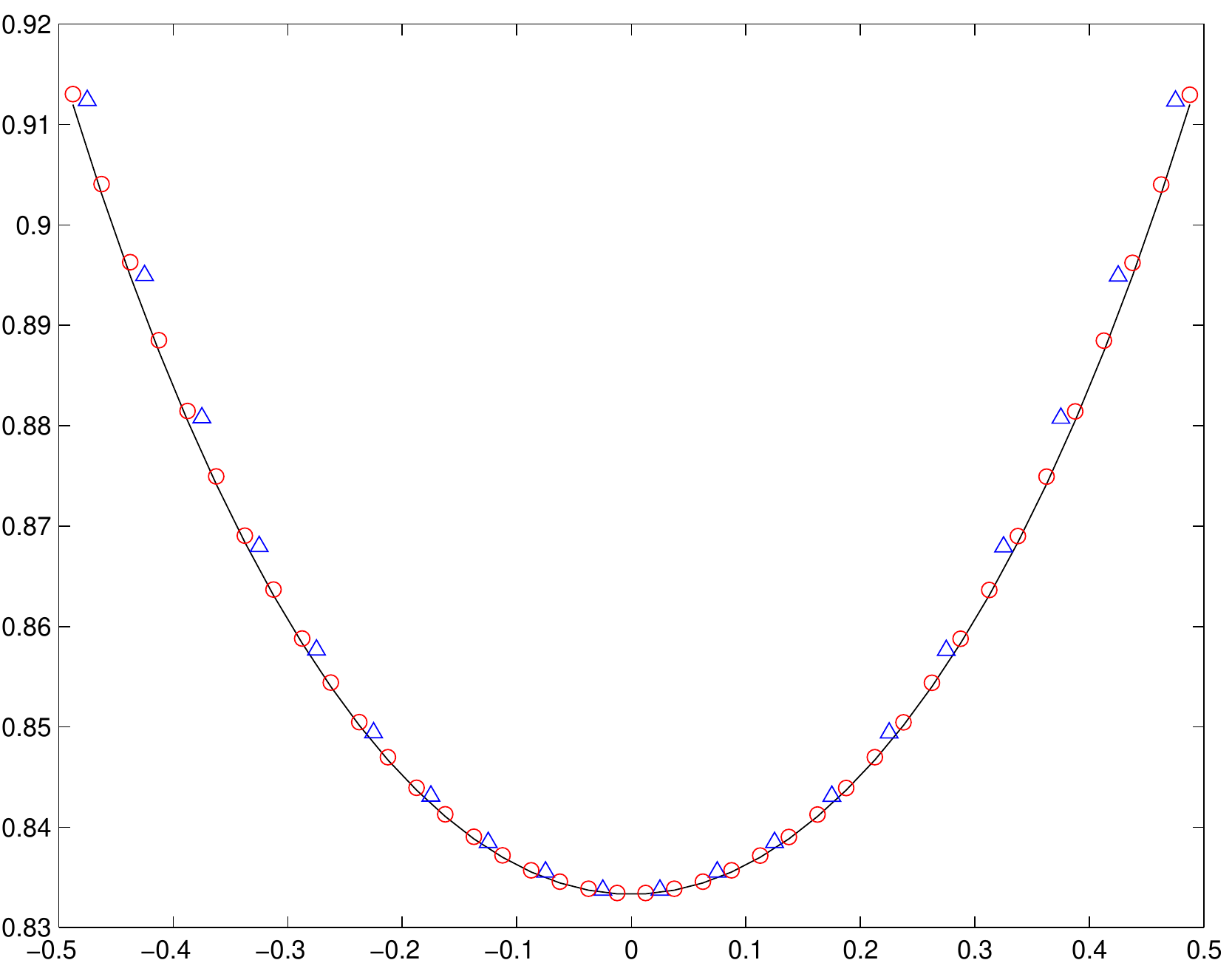}
 }
 \subfigure[$u_1$]{
 \includegraphics[width=0.3\textwidth]{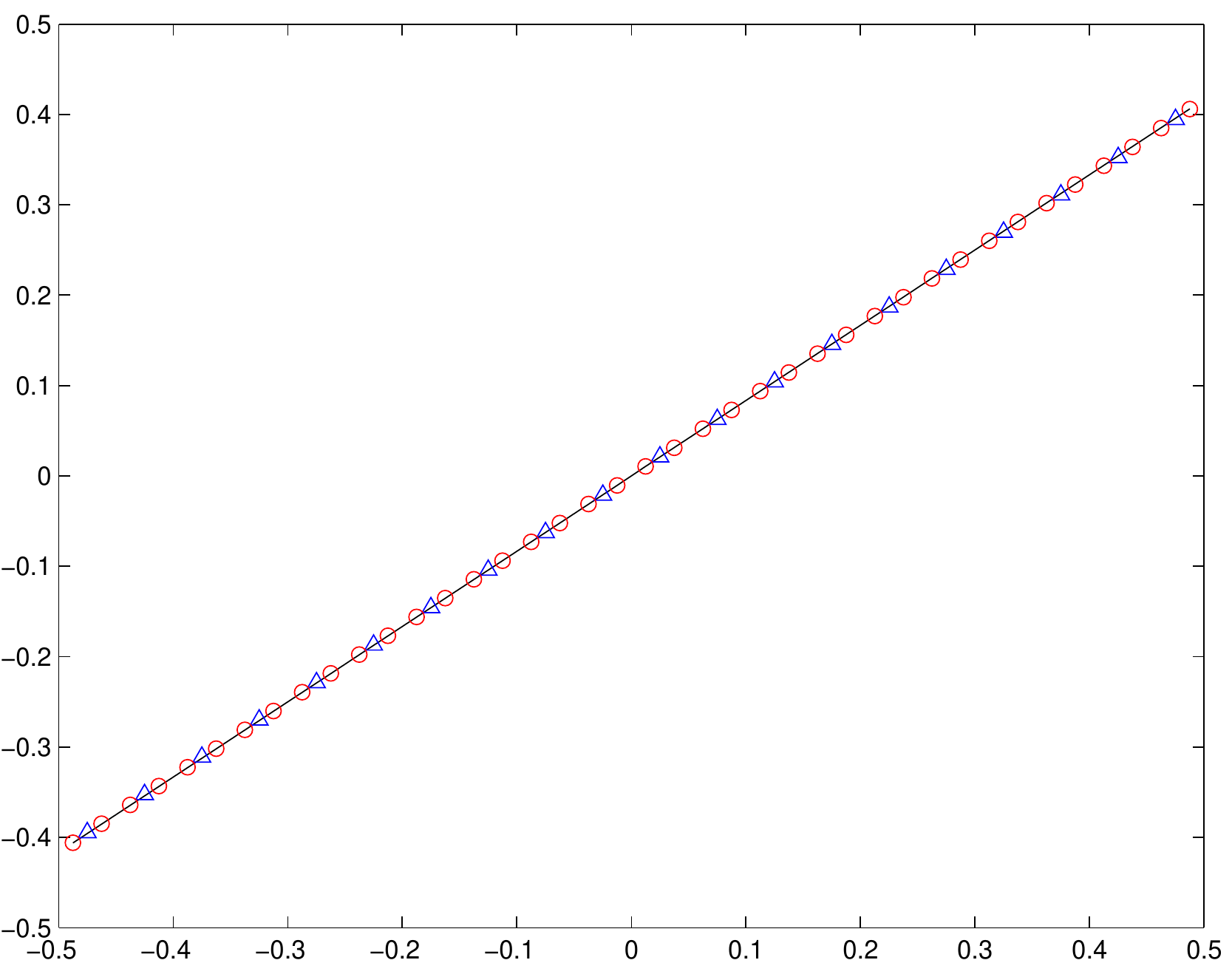}
 }
 \subfigure[$p$]{
 \includegraphics[width=0.3\textwidth]{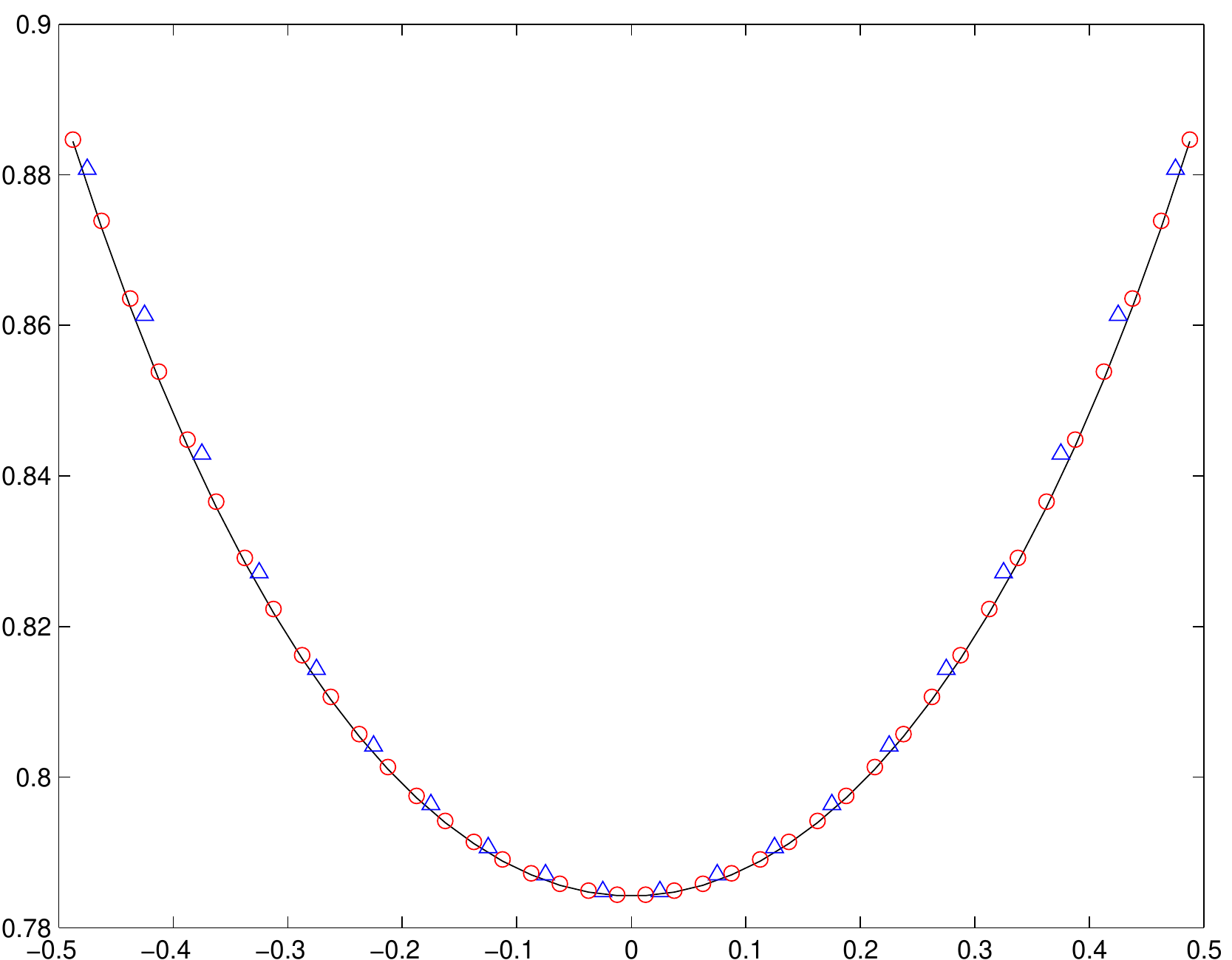}
 }

 \caption{Example \ref{ex:boost}: The number density, velocity and pressure at $t=1.2$
 are obtained by our BGK scheme   with 20 cells (``${\triangle}$") and 40 cells (``{$\circ$}"), respectively. The solid line represents the exact solution.}
 \label{fig:boost}
\end{figure}

\begin{table}[H]
 \setlength{\abovecaptionskip}{0.cm}
 \setlength{\belowcaptionskip}{-0.cm}
 \caption{Example \ref{ex:boost}: Numerical errors of $ n $ in $l^1$ and $l^2$-norm and convergence rates at $t = 1.2$.}\label{tab:boost}
 \begin{center}
   \begin{tabular}{c|cc|cc}
     \hline
     $N$    &  $l^1$ error  & $l^1$ order  &  $l^2$ error  &  $l^2$ order \\
     \hline
     10   & 8.9214e-03    & --           & 1.2028e-02    & --  \\
     20   & 1.9291e-03    & 2.2094       & 2.4837e-03    & 2.2759  \\
     40   & 4.9766e-04    & 1.9546       & 6.2325e-04    & 1.9946  \\
     80   & 1.3682e-04    & 1.8629       & 1.6760e-04    & 1.8948  \\
     \hline
   \end{tabular}
 \end{center}
\end{table}


\begin{example}[Heat conduction]\rm\label{ex:heat}
This test considers the problem of heat conduction
between two parallel plates, which  are assumed to be infinite
 and separated by a distance $H$.
Moreover, both  plates are always stationary.
 The temperatures of the lower and upper plates are given
 by $T_0$ and $T_1$, respectively. The viscosity $\mu$ is a constant.

 Based on the above assumptions,  the Navier-Stokes equations   can be simplified as
 \begin{equation*}
   \frac{\partial }{\partial y}\left(\frac{1}{T^2}\frac{\partial T}{\partial y}\right) = 0,\quad T(0)=T_0,\quad T(H)=T_1,
 \end{equation*}
whose  analytic solution is gotten as follows
 \begin{equation}\label{eq:heatexact}
   T(y)=\frac{HT_0T_1}{HT_1-(T_1-T_0)y}.
 \end{equation}
Our computation  takes $H=1, p=0.8, u_1=0.2, u_2=0, \mu=5\times10^{-3}, T_0=0.1, T_1=1.0002T_0$, and $0.5(T_0+T_1)$ as the initial value  for the temperature $T$ in the entire domain.
Moreover, the initial time partial derivatives  are given by $ n _t(x,0)=0, v_{1t}(x,0)=0, v_{2t}(x,0)=0$ and $p_t(x,0)=0$. Because $u_1\neq 0$, the the 2D BGK scheme should be used for numerical simulation.

The left figure in Fig. \ref{fig:heat} plots the numerical temperature (``{$\circ$}") obtained by the 2D BGK scheme in comparison with  the steady-state  analytic solution (solid line)
given by \eqref{eq:heatexact}. It is seen that the numerical solution is well comparable with the analytic.
The right figure in Fig. \ref{fig:heat} shows
convergence of the temperature  to the steady state
measured in the $l^1$-error between the numerical  and analytic solutions.
\end{example}

\begin{figure}[htbp]
 \centering
 \subfigure[Temperature $T$]{
   \includegraphics[width=0.4\textwidth]{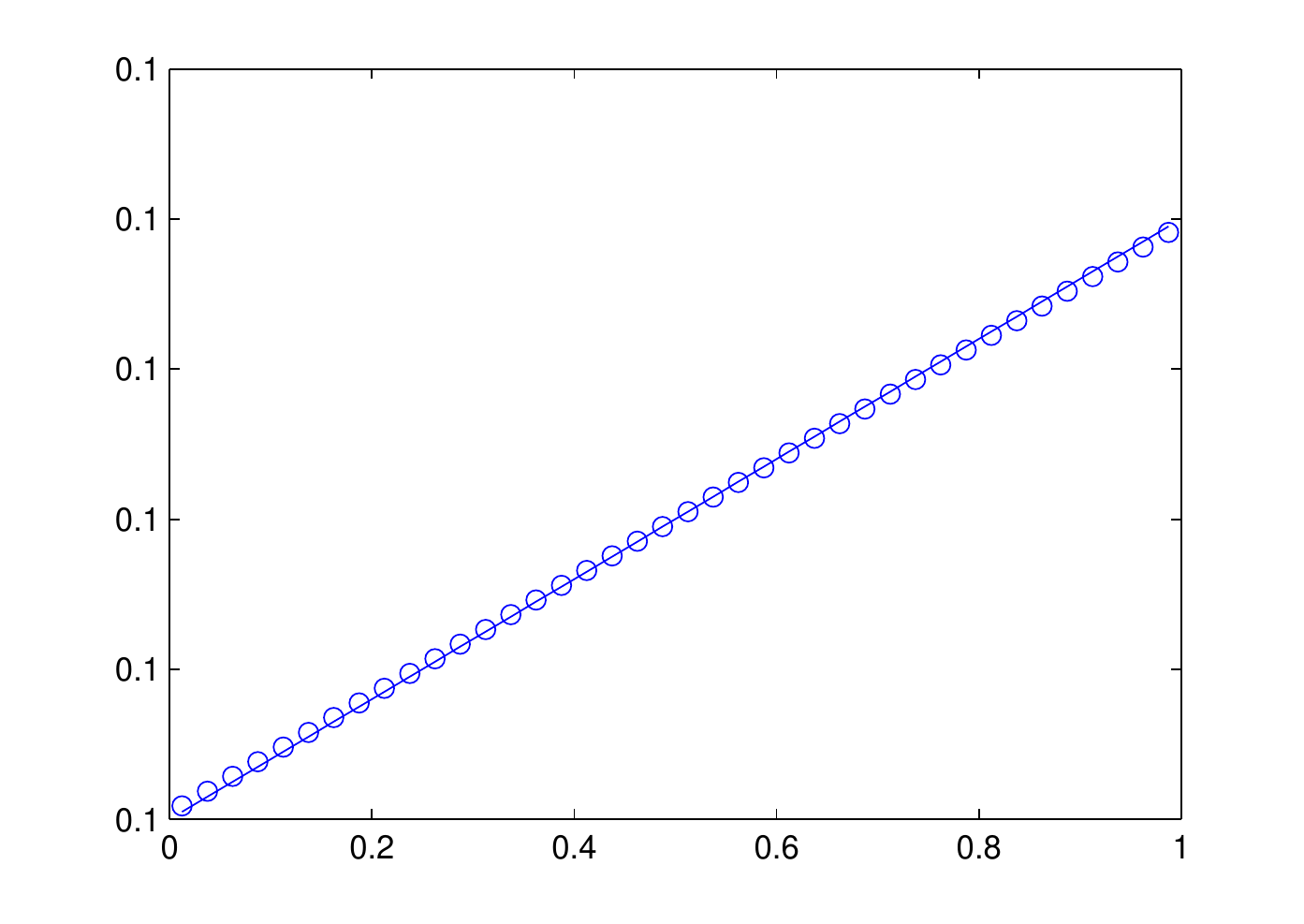}
 }
 \subfigure[$l^1$-error in temperature]{
   \includegraphics[width=0.4\textwidth]{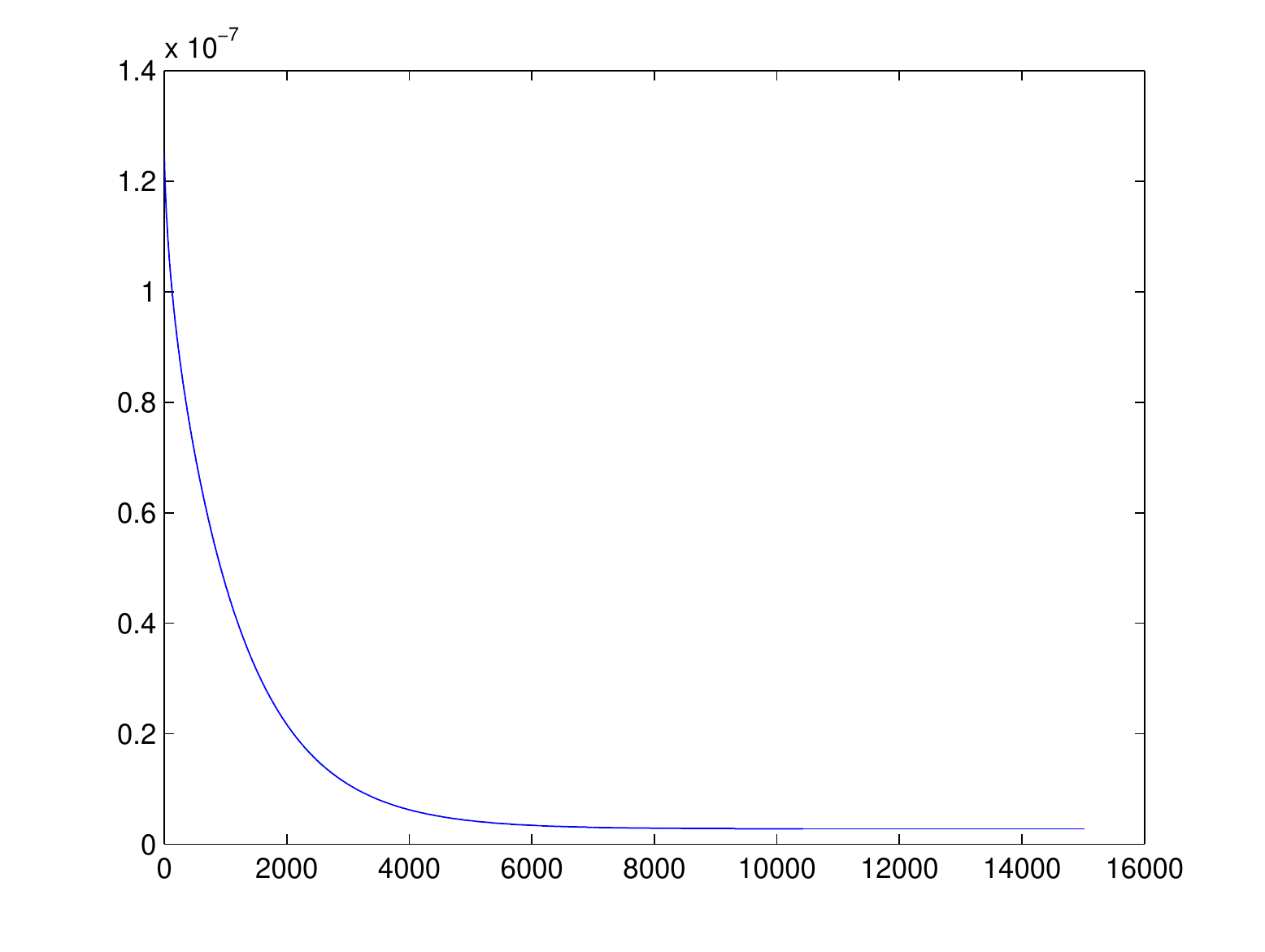}
 }
 \caption{Example \ref{ex:heat}: Left: The numerical temperature (``$\circ$") is obtained by the 2D BGK scheme
 with 40 cells  in comparison with the exact solutions (solid line); right:
 Convergence of the temperature to the steady state measured in the $l^1$-error.}
 \label{fig:heat}
\end{figure}

\section{Conclusions}
\label{sec:conclusion}
The paper developed  second-order accurate genuine  BGK 
schemes  in the framework of finite volume method for
the 1D and 2D  ultra-relativistic flows.
Different from the existing KFVS or BGK-type schemes for
the ultra-relativistic Euler equations
the  present genuine  BGK schemes were  derived from the analytical solution of the
Anderson-Witting model, which was   given for the first time and included
the ``genuine'' particle collisions in the gas transport process.
The genuine BGK schemes  were also developed for the  ultra-relativistic viscous flows
and two ultra-relativistic viscous examples were designed.
Several 1D and 2D numerical experiments were conducted
to demonstrate that the proposed BGK schemes were accurate and stable in simulating
 ultra-relativistic inviscid and viscous  flows, and had  higher resolution at the contact discontinuity  than the KFVS or BGK-type schemes.
The present BGK schemes could be easily  extended to the 3D Cartesian grid for the ultra-relativistic  flows and it was
interesting to develop the genuine BGK schemes for the special and general relativistic flows.


\section*{Acknowledgements}
This work was partially supported by
the Science Challenge Project, No. JCKY2016212A502,
the Special Project on High-performance Computing
       under the National Key R\&D Program (No. 2016YFB0200603),
       and
the National Natural Science
Foundation of China (Nos. 91330205, 91630310,   11421101).

\bibliography{ref/journalname,ref/pkuth}
           \bibliographystyle{plain}
\end{document}